\begin{document}

\newtheorem{theorem}{Theorem}
\newtheorem{lemma}{Lemma}
\newtheorem{definition}{Definition}
\newtheorem{remark}{Remark}

\begin{center}
  {\Large {\bf Automorphisms of the semigroup
  of invertible matrices with nonnegative elements over commutative
partially ordered rings}}

\end{center}
 \vspace{.3truecm}

{\large  Bunina E.I., Semenov P.P.}

\bigskip
\bigskip

Suppose that  $R$ is an ordered ring, $G_n(R)$ is a subsemigroup of
$GL_n(R)$, consisting of all matrices with nonnegative elements. In
the paper~[1] A.V.~Mikhalev and M.A.~Shatalova described all
automorphisms of $G_n(R)$, where  $R$ is a linearly ordered
skewfield and $n\ge 2$. In~[3] E.I. Bunina and A.V.~Mikhalev found
all automorphisms of $G_n(R)$, if $R$ is an arbitrary linearly
ordered associative ring with $1/2$, $n\ge 3$. In this paper we
describe automorphisms of $G_n(R)$, if $R$ is a commutative
partially ordered ring, containing~$\mathbb{Q}$, $n\ge 3$.

\section{Necessary definitions and notions, formulation of the main theorem}\leavevmode

Let $R$ be an associative (commutative) ring with~$1$.

\begin{definition}
\emph{A ring   $R$ is called  \emph{partially ordered} if there is a
partial  order relation $\le$ on~$R$ satisfying the following
conditions}\emph{:}

\emph{1)} $\forall x,y,z\in R (x\le y \Rightarrow x+z\le y+z)$;

\emph{2)} $\forall x,y \in R (0\le x \land 0\le y \Rightarrow 0\le
xy)$.

\emph{We will consider such partially ordered rings that contain
$1/m \ge 0$ for all  $m\in \mathbb N$.}
\end{definition}

Elements $r\in R$ with $0\le r$ are called \emph{nonnegative}.

\begin{definition}
\emph{Let $R$ be a partially ordered ring. By $G_n(R)$ we denote the
subsemigroup of $GL_n(R)$, consisting of all matrices with
nonnegative elements.}
\end{definition}

The set of all invertible elements of~$R$ is denoted by~$R^*$. If
$1/2\in R$, then  $R^*$ is infinite, since it contains all $1/2^n$
for $n\in \mathbb N$. The set $R_+\cap R^*$ is denoted by~$R_+^*$.
If $1/2\in R$, it is also infinite.

\begin{definition}
\emph{Let $I=I_n$, $\Gamma_n(R)$ be the group consisting of all
invertible matrices from $G_n(R)$, $\Sigma_n$ be symmetric group of
order~$n$, $S_\sigma$ be the matrix of a substitution $\sigma\in
\Sigma_n$ \emph{(}i.\,e. the matrix $(\delta_{i\sigma(j)})$, where
$\delta_{i\sigma(j)}$ is the Kroneker symbol\emph{)}, $S_n=\{
S_\sigma|\sigma\in \Sigma_n\}$, $diag[d_1,\dots,d_n]$ be a diagonal
matrix with elements $d_1,\dots,d_n$ on the diagonal,
$d_1,\dots,d_n\in R_+^*$. By $D_n(R)$ we denote the group of all
invertible diagonal matrices from $G_n(R)$.}
\end{definition}

\begin{definition}
\emph{If  ${\mathcal A},{\mathcal B}$ are subsets of $G_n(R)$, then
$$
C_{\mathcal A}({\mathcal B})=\{ a\in {\mathcal A}|\forall b\in
{\mathcal B}\ (ab=ba)\}.
$$}
\end{definition}

Let $E_{ij}$ be a matrix unit.

\begin{definition}
\emph{By $B_{ij}(x)$ we denote the matrix $I+xE_{ij}$. ${\mathbf P}$
is a subsemigroup in $G_n(R)$, generated by all matrices $S_\sigma$
($\sigma\in \Sigma_n$), $B_{ij}(x)$ ($x\in R_+, i\ne j$) and
$diag[\alpha_1,\dots,\alpha_n]\in D_n(R)$.}
\end{definition}

\begin{definition}
\emph{Two matrices $A,B\in G_n(R)$ are called \emph{$\mathcal
P$-equivalent} (see~[1]), if there exist matrices $A_j\in G_n(R)$,
$j=0,\dots,k$, $A=A_0,B=A_k$, and matrices $P_i,\widetilde P_i,
Q_i,\widetilde Q_i\in \mathbf P$, $i=0,\dots , k-1$ such that
$P_iA_i\widetilde P_i=Q_iA_{i+1}\widetilde Q_i$.}
\end{definition}

\begin{definition}
\emph{By $GE_n^+(R)$ we denote a subsemigroup in $G_n(R)$, generated
by all matrices  $\mathcal P$-equivalent to matrices from~$\mathbf
P$.}
\end{definition}

Note that if (for example) $R$ is a linearly ordered field, then
$GE_n^+(R)=G_n(R)$.

\begin{definition}
\emph{If $G$ if some semigroup (for example,
 $G=R_+^*, G_n(R), GE_n^+(R)$),
then a homomorphism $\lambda(\cdot ): G\to G$ is called a
\emph{central homomorphism of}~$G$, if $\lambda(G)\subset Z(G)$. A
mapping $\Omega(\cdot): G\to G$ such that $\forall X\in G$
$$
\Omega (X)=\lambda(X)\cdot X,
$$
where $\lambda(\cdot)$ is a central homomorphism, is called a
\emph{central homotety}.}
\end{definition}

For example, if $R=\mathbb R$ (the field of real numbers), then a
homomorphism $\lambda(\cdot): G_n(\mathbb R)\to G_n(\mathbb R)$ such
that
 $\forall A\in G_n(\mathbb R)$ $\lambda(A)=|det\, A|\cdot I$, is a central homomorphism, and a mapping $\Omega(\cdot): G_n(\mathbb
R)\to G_n(\mathbb R)$ such that
 $\forall A\in G_n(\mathbb R)$
$\Omega(A)=|det\, A|\cdot A$, is a central homotety. Note that a
central homotety  $\Omega(\cdot)$ always is an endomorphism of a
semigroup~$G$: $\forall X,Y\in G$
$\Omega(X)\Omega(Y)=\lambda(X)X\cdot \lambda(Y)Y=
\lambda(X)\lambda(Y) X\cdot Y=\lambda(XY)XY=\Omega(XY).$

For every matrix  $M\in \Gamma_n(R)$ let $\Phi_M$ denote an
automorphism of $G_n(R)$ such that $\forall X\in G_n(R)$
$\Phi_M(X)=MXM^{-1}$.

For every  $y(\cdot)\in Aut (R_+)$ by $\Phi^y$ we denote an
automorphism of $G_n(R)$ such that $\forall X=(x_{ij})\in G_n(R)$
$\Phi^y(X)=\Phi^y((x_{ij}))=(y(x_{ij}))$.

The main result of this paper is the following

 {\bf Theorem.}
\emph{Let $\Phi$ be an automorphism of a semigroup $G_n(R)$, $n\ge
3$, $\mathbb{Q}\in R$. Then on the semigroup $GE_n^+(R)$
$\Phi=\Phi_M\Phi^c\Omega$, where $M\in \Gamma_n(R)$, $c(\cdot)\in
Aut(R_+)$, $\Omega(\cdot)$ is a central homotety of the semigroup
$GE_n^+(R)$.}

\section{Constructing of an automorphism $\Phi'$}\leavevmode

In this section we suppose that some automorphism $\Phi\in
Aut(G_n(R))$ is fixed, where $n\ge 3$, $\mathbb{Q}\subset R$, and by
this automorphism we construct a new automorphism $\Phi'\in
Aut(G_n(R))$ such that $\Phi'=\Phi_{M'}\Phi$ for some matrix $M'\in
\Gamma_n(R)$ and for all $\sigma\in \Sigma_n$ we have
$\Phi'(S_\sigma)= \alpha^{sgn\, \sigma}S_\sigma$, $\alpha^=1$.

\begin{lemma}
$\forall x,y \in R_+ (x+y=0 \Rightarrow x=0 \land y=0)$.
\end{lemma}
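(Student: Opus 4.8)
The plan is to reduce everything to the \emph{antisymmetry} of the order relation, which is part of the definition of a partial order; neither the multiplicative compatibility (condition~2) nor the presence of $\mathbb{Q}$ or of the elements $1/m$ plays any role here. The essential content of the lemma is simply that in $R_+$ the only element admitting a nonnegative additive inverse is~$0$.

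Concretely, suppose $x,y\in R_+$ with $x+y=0$, so that in particular $0\le x$ and $0\le y$. First I would invoke the translation invariance of the order (condition~1 of the definition of a partially ordered ring): adding $x$ to both sides of the inequality $0\le y$ yields $x=x+0\le x+y=0$, that is, $x\le 0$. Combining this with the hypothesis $0\le x$, the antisymmetry of $\le$ forces $x=0$. Substituting back into $x+y=0$ then gives $y=(x+y)-x=0-0=0$, and the argument is complete.

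I do not expect a genuine obstacle here: the only point requiring a little care is to apply condition~1 in the correct direction, adding the \emph{other} variable rather than the same one, so as to arrive at $x\le x+y=0$, after which antisymmetry closes the argument at once. Since the roles of $x$ and $y$ are symmetric, either variable may be eliminated first.
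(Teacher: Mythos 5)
Your proof is correct and follows exactly the same route as the paper: add $x$ to $0\le y$ to get $x\le x+y=0$, then conclude $x=0$ by antisymmetry, and likewise for $y$. Nothing to add.
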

\begin{proof}
By definition of a partially ordered ring we obtain $0 \le y
\Rightarrow x \le x+y$, $x \le 0$ therefore, $x=0$, similarly $y=0$.

\end{proof}

\begin{lemma}\label{diag}
If $\Phi$ is an automorphism of $G_n(R)$, where $n\ge 3$,
$\mathbb{Q}\subset R$, then

\emph{1)} $\Phi(\Gamma_n(R))=\Gamma_n(R),$

\emph{2)} $\Phi(D_n(R))=D_n(R),$
\end{lemma}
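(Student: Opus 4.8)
The first assertion is purely semigroup-theoretic, and I would dispose of it without any matrix computation. The key observation is that $\Gamma_n(R)$ is exactly the group of units of the monoid $G_n(R)$: by definition a matrix $A\in G_n(R)$ lies in $\Gamma_n(R)$ precisely when $A^{-1}\in G_n(R)$, i.e. when $A$ has a two-sided inverse inside the semigroup. Any automorphism $\Phi$ fixes the identity $I$ (the unique two-sided identity of $G_n(R)$) and carries invertible elements to invertible elements, since $AB=BA=I$ implies $\Phi(A)\Phi(B)=\Phi(B)\Phi(A)=I$. Hence $\Phi$ maps the group of units into itself, and being bijective it maps it onto itself, giving $\Phi(\Gamma_n(R))=\Gamma_n(R)$.

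For part 2) the plan is to give an intrinsic, automorphism-invariant description of $D_n(R)$ inside the group $\Gamma_n(R)$, and then apply it to $\Phi$ restricted to $\Gamma_n(R)$, which by part 1) is a group automorphism. The first step is to pin down the structure of $\Gamma_n(R)$. Applying the preceding lemma, that a sum of nonnegative elements vanishes only when every summand vanishes, to the off-diagonal entries of the identity $AA^{-1}=I$ with $A,A^{-1}\in G_n(R)$, I would show that every element of $\Gamma_n(R)$ is monomial, i.e. of the form $S_\sigma\,\mathrm{diag}[d_1,\dots,d_n]$ with $\sigma\in\Sigma_n$ and $\mathrm{diag}[d_1,\dots,d_n]\in D_n(R)$. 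Thus $\Gamma_n(R)\cong(R_+^*)^n\rtimes\Sigma_n=R_+^*\wr\Sigma_n$, with $D_n(R)$ the base subgroup $(R_+^*)^n$ and $\Sigma_n$ acting by permuting the coordinates.

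Within this wreath product I would characterize $D_n(R)$ as the unique maximal torsion-free abelian normal subgroup. It is abelian and normal, since conjugation by a monomial matrix sends a diagonal matrix to a diagonal matrix, and it is torsion-free because $R_+^*$ is. Conversely, let $N$ be any torsion-free abelian normal subgroup and let $g\in N$ project to $\sigma\in\Sigma_n$. Normality gives $[g,D_n(R)]\subseteq N\cap D_n(R)$, and a direct computation shows that, writing the base group additively, these commutators are exactly the elements $(\sigma-1)d$ as $d$ ranges over $D_n(R)$. Since $N$ is abelian, $g$ must commute with every such element, which forces $(\sigma-1)^2$ to annihilate the whole base group. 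Setting $u=(\sigma-1)d$ one has $\sigma u=u$, so multiplying by $1+\sigma+\dots+\sigma^{k-1}$ (with $k$ the order of $\sigma$) gives $ku$ on the one hand and $0$ on the other, because $(1+\sigma+\dots+\sigma^{k-1})(\sigma-1)=\sigma^k-1=0$; torsion-freeness then yields $u=0$, whence $\sigma=\mathrm{id}$ and $N\subseteq D_n(R)$. As a group automorphism preserves the properties ``abelian'', ``normal'', ``torsion-free'' and ``maximal'', $\Phi$ must fix this subgroup, so $\Phi(D_n(R))=D_n(R)$.

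The step I expect to be the main obstacle is the torsion-freeness of $R_+^*$: in a merely partially ordered ring with zero divisors the implication $d\ge 0,\ d^{-1}\ge 0,\ d^k=1\Rightarrow d=1$ is not automatic, since $(1-d)(1+d+\dots+d^{k-1})=0$ need not force $1-d=0$. I would either extract this property from the order axioms together with $\mathbb{Q}\subset R$, or bypass it by an order-robust alternative: choose a diagonal matrix $d_0$ with sufficiently generic entries so that its centralizer in the full semigroup satisfies $C_{G_n(R)}(d_0)=D_n(R)$, and use that $\Phi$ preserves centralizers. The delicate point in that route is that $\Phi(d_0)$ a priori yields only a subgroup conjugate to $D_n(R)$; reconciling this with the monomial description above is precisely what secures the exact equality $\Phi(D_n(R))=D_n(R)$ rather than mere conjugacy.
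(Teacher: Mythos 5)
Your part 1) is correct and is exactly the paper's argument: $\Gamma_n(R)$ is the group of units of the monoid $G_n(R)$, and units go to units under any automorphism.

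Part 2), however, rests on a false structural claim. From $AA^{-1}=I$ and the preceding lemma you correctly obtain $a'_{ik}a_{kj}=0$ for $i\ne j$, but this only says that certain \emph{products} of entries vanish, not the entries themselves; it does not make $A$ monomial. The hypotheses allow $R$ to be any commutative partially ordered ring containing $\mathbb{Q}$, so $R$ may have nontrivial idempotents. For $R=\mathbb{Q}\times\mathbb{Q}$ with the componentwise order and $e_1=(1,0)$, $e_2=(0,1)$, the matrix
$$
\begin{pmatrix} e_1& e_2& 0\\ e_2& e_1& 0\\ 0& 0& 1\end{pmatrix}
$$
is its own inverse, has nonnegative entries, hence lies in $\Gamma_3(R)$, and is not monomial. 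So $\Gamma_n(R)\not\cong R_+^*\wr\Sigma_n$ in general, and the identification of $D_n(R)$ as the base group of a wreath product, together with the ``unique maximal torsion-free abelian normal subgroup'' characterization built on it, collapses. (The paper itself relies on exactly such non-monomial involutions later, e.g.\ the matrix $C$ with entries $e_1,e_2$ in the $n=3$ lemma.) The obstacle you flagged --- torsion-freeness of $R_+^*$ --- is real but secondary; the monomiality claim is the fatal one. Your fallback via the centralizer of a generic diagonal matrix $d_0$ has the defect you yourself note: nothing forces $\Phi(d_0)$ to be diagonal, so you only conclude that $\Phi(D_n(R))$ is some subgroup with the same abstract properties, not that it equals $D_n(R)$.

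The paper's route avoids both problems. It defines $\mathcal F$ to be the set of all $A\in\Gamma_n(R)$ commuting with \emph{every} conjugate of $A$ --- a condition stated purely in semigroup terms and involving no distinguished element, hence manifestly preserved by $\Phi$. The relations $a'_{ik}a_{kj}=0$ do suffice to show that every conjugate of a diagonal matrix is again diagonal (so $D_n(R)\subset\mathcal F$, and incidentally $D_n(R)$ is normal, as you also observed); the converse inclusion $\mathcal F\subset D_n(R)$ is then obtained by an explicit computation with the conjugators $diag[2,1,\dots,1]\cdot S_{ij}$, using the lemma on sums of nonnegative elements and the invertibility of $C$ to kill all off-diagonal entries. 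To repair your argument you would need to replace the wreath-product picture by an invariant characterization of this kind.
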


\begin{proof}
1) Since $\Gamma_n(R)$ is the subgroup of all invertible matrices of
$G_n(R)$, then $\Phi(\Gamma_n(R))=\Gamma_n(R)$.

2) Consider the set $\mathcal F$ of all matrices $A\in \Gamma_n(R)$,
commuting with all matrices that are conjugate to~$A$.

Consider
$$
B=diag [\alpha_1,\dots,\alpha_n]\in D_n (R),
$$
Let
$$
A=\begin{pmatrix}
a_{11}& \dots & a_{1n}\\
\vdots& \ddots& \vdots\\
a_{n1}& \dots& a_{nn}
\end{pmatrix} \in \Gamma_n(R),  \quad
A^{-1}=\begin{pmatrix}
a_{11}'& \dots & a_{1n}'\\
\vdots& \ddots& \vdots\\
a_{n1}'& \dots& a_{nn}'
\end{pmatrix}.
$$
We have
$$
\sum_{k=1}^n a_{ik}'\cdot a_{kj} =0 \text{ для всех } i\ne j.
$$
Therefore, $a_{ik}'\cdot a_{kj}=0$ for all $i\ne j$ (by Lemma~1).
Then $A^{-1} B A$ is a diagonal matrix, so $D_n(R)\subset {\mathcal
F}$.

Suppose that there exists a matrix $C\in {\mathcal F}\setminus
D_n(R)$,
$$
C=\begin{pmatrix}
c_{11}& \dots & c_{1i}&\dots & c_{1j}&\dots & c_{1n}\\
\vdots& \ddots& \vdots& \vdots& \vdots& \vdots&\vdots\\
c_{i1}&\dots&c_{ii}&\dots &c_{ij}& \dots& c_{in}\\
\vdots& \vdots& \vdots& \ddots& \vdots& \vdots& \vdots\\
c_{j1}&\dots&c_{ji}& \dots& c_{jj}& \dots & c_{jn}\\
\vdots& \vdots&\vdots &\vdots &\vdots& \ddots& \vdots\\
 c_{n1}&\dots& c_{in}& \dots & c_{jn} &\dots& c_{nn}
\end{pmatrix}.
$$
Let us conjugate $C$ by the matrix $diag[d,1,\dots,1]\cdot S_{i,j}$
($i,j\ne 1$). The conjugate matrix ($C'$) has the form
$$
C=\begin{pmatrix}
c_{11}& \dots & c_{1j}d^{-1}&\dots & c_{1i}d^{-1}&\dots & c_{1n}d^{-1}\\
\vdots& \ddots& \vdots& \vdots& \vdots& \vdots&\vdots\\
c_{j1}d&\dots&c_{jj}&\dots &c_{ji}& \dots& c_{jn}\\
\vdots& \vdots& \vdots& \ddots& \vdots& \vdots& \vdots\\
c_{i1}d&\dots&c_{ij}& \dots& c_{ii}& \dots & c_{in}\\
\vdots& \vdots&\vdots &\vdots &\vdots& \ddots& \vdots\\
 c_{n1}d&\dots& c_{nj}& \dots & c_{nj} &\dots& c_{nn}
\end{pmatrix}.
$$

Using the condition $CC'=C'C$, we obtain
\begin{multline*}
c_{11}^2+c_{12}c_{21}d+\dots+c_{1i}c_{j1}d+\dots+c_{1j}c_{i1}d+\dots+c_{1n}c_{n1}d=\\
=c_{11}^2+c_{21}c_{12}d^{-1}+\dots+c_{i1}c_{1j}d^{-1}+\dots+c_{j1}c_{1i}d^{-1}+\dots+
c_{n1}c_{1n}.
\end{multline*}
Taking $d=2$ we obtain
$$
3\cdot(c_{12}c_{21}+\dots+c_{1i}c_{j1}+\dots+c_{1j}c_{i1}+\dots+c_{1n}c_{n1})=0.
$$
Therefore (by Lemma~1),
$$
c_{1i}c_{j1}=0 \text{ for all $i,j\ne 1$}.
$$
Similarly,
$$
c_{ki}c_{jk}=0\text{ for all $i,j\ne k$}.
$$
Let
$$
C^{-1}= \begin{pmatrix} \gamma_{11}&\dots &\gamma_{1n}\\
\vdots&\ddots&\vdots\\
\gamma_{n1}&\dots& \gamma_{nn}
\end{pmatrix}.
$$
Then
$$
0=\gamma_{11} c_{1k}+\gamma_{12}c_{2k}+\dots+\gamma_{1n}c_{nk},
$$
 thus
 $$
 \gamma_{11}c_{1k}=0, \quad k\ne 1.
 $$
We know
 $$
 1=\gamma_{11}c_{11}+\gamma_{12}c_{21}+\dots +\gamma_{1n}c_{n1}.
 $$
 Multiplying this equality by $c_{1k}$, we obtain
 $c_{1k}=0$. Similarly, $c_{ij}=0$ for $i\ne j$. Therefore,
 ${\mathcal F}=D_n(R)$, and so $\varphi(D_n(R))=D_n(R)$.
\end{proof}

\begin{lemma}\label{l1}
Let $\tau=(12)(34)\dots (2[n/2]-1, 2[n/2])\in S_n$. If $\Phi$ is an
automorphism of $G_n(R)$, $n\ge 3$, then there exists a matrix $M\in
\Gamma_n(R)$ such that $\Phi_M \Phi(S_\tau) =bS_\tau$, $b\in R_+^*$,
$b^2=1$.
\end{lemma}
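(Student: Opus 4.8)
The plan is to study $A:=\Phi(S_\tau)$ directly, exploiting that $\tau$ is an involution and that $\Phi$ fixes the distinguished subgroups of Lemma~\ref{diag}. Since $\tau^2=\mathrm{id}$ we have $S_\tau^2=I$, and because $\Phi$ is a semigroup automorphism it fixes the identity ($\Phi(I)$ is a two-sided identity, hence $\Phi(I)=I$), so $A^2=\Phi(S_\tau^2)=I$. By Lemma~\ref{diag}.1 we have $A\in\Gamma_n(R)$, and together with $A^2=I$ this gives $A^{-1}=A$, so both $A$ and its inverse have nonnegative entries. Writing out the off-diagonal sums of $A^2=I$ and applying Lemma~1 yields the orthogonality relations $a_{ik}a_{kj}=0$ for all $i\ne j$, together with $\sum_k a_{ik}a_{ki}=1$ on the diagonal; these are the relations that will eventually force $A$ into monomial form after conjugation.

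Next I would transport the problem onto the diagonal torus. Since $S_\tau$ normalizes $D_n(R)$ with $S_\tau\,\mathrm{diag}[d_1,\dots,d_n]\,S_\tau^{-1}=\mathrm{diag}[d_{\tau(1)},\dots,d_{\tau(n)}]$, and since $\Phi(D_n(R))=D_n(R)$ by Lemma~\ref{diag}.2, applying $\Phi$ gives
$$
A\,\Phi(D)\,A^{-1}=\Phi\bigl(S_\tau D S_\tau^{-1}\bigr)\in D_n(R)\qquad(D\in D_n(R)).
$$
Thus conjugation by $A$ normalizes $D_n(R)\cong(R_+^*)^n$ and, through the automorphism $\Phi|_{D_n(R)}$, realizes the coordinate involution induced by $\tau$. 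Analysing the homomorphism $D\mapsto ADA^{-1}=ADA$ on $(R_+^*)^n$ — whose $i$-th diagonal entry is $\sum_k a_{ik}a_{ki}d_k$, a nonnegative combination with coefficients summing to $1$ which must nevertheless be a \emph{monomial} in the $d_k$ — one extracts an involution $\pi\in\Sigma_n$ describing how $A$ permutes the coordinate directions, so that (in monomial form) $A\in D_n(R)\,S_\pi$.

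The main obstacle is precisely to show that $\pi$ is \emph{conjugate to} $\tau$, i.e. that the involution read off from $A$ has the same cycle type, namely the maximal number $[n/2]$ of transpositions. The clean mechanism is that $\Phi$ descends, via Lemma~\ref{diag} and the normality of $D_n(R)$, to an automorphism of $\Gamma_n(R)/D_n(R)\cong\Sigma_n$ sending the class of $\tau$ to that of $\pi$; for $n\ne6$ every automorphism of $\Sigma_n$ is inner and therefore preserves cycle type, giving $\pi\sim\tau$. Two genuinely delicate points remain. First, the exceptional outer automorphism of $\Sigma_6$ interchanges the classes of transpositions and of triple transpositions, so for $n=6$ the equality of cycle types has to be forced using relations in the full semigroup (for instance the conjugation action of $S_\sigma$ on the transvections $B_{ij}(x)$), not merely the group $\Gamma_n(R)$. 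Second, if $R$ carries nontrivial idempotents then $\Gamma_n(R)$ is strictly larger than the group of monomial matrices and the quotient $\Gamma_n(R)/D_n(R)$ is no longer $\Sigma_n$, so $A$ need not literally be monomial; there one argues that the coordinate-permutation pattern extracted above is globally consistent across the spectrum of $R$, whence it is still described by a single involution $\pi\sim\tau$. This rigidity step is where the bulk of the work lies.

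Finally I would put $A$ into the desired form. Choosing $\rho$ with $\rho\pi\rho^{-1}=\tau$, set $A':=S_\rho A S_\rho^{-1}=D_0 S_\tau$ with $D_0=\mathrm{diag}[d_1,\dots,d_n]\in D_n(R)$. Imposing $A'^2=(D_0 S_\tau)^2=D_0 D_0^{\tau}=I$ gives $d_i d_{\tau(i)}=1$ for every $i$, and in particular $d_j^2=1$ at the unique fixed point $j$ when $n$ is odd. Conjugating $A'$ by $E=\mathrm{diag}[e_1,\dots,e_n]\in D_n(R)$ replaces the $i$-th diagonal factor by $e_i d_i e_{\tau(i)}^{-1}$; on each $2$-cycle $\{p,q\}$ the product of the two resulting factors equals $d_p d_q=1$, so the $e$'s can be chosen to make every factor one and the same constant $b$ (with $b=d_j$ at the fixed point when $n$ is odd), and then $b^2=1$ is forced. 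Putting $M:=E S_\rho\in\Gamma_n(R)$ we obtain $\Phi_M\Phi(S_\tau)=M A M^{-1}=b S_\tau$ with $b\in R_+^*$, $b^2=1$, as required.
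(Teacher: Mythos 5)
Your setup (the orthogonality relations from $A^2=I$, the use of Lemma~\ref{diag}, and the final diagonal normalization of $D_0S_\tau$ with $d_id_{\tau(i)}=1$) matches the paper, but the core of the lemma --- pinning down the conjugacy type of $A=\Phi(S_\tau)$ --- is exactly the part you do not prove, and the route you sketch for it does not work in the paper's generality. The coefficients $a_{ik}a_{ki}$ appearing in the $i$-th diagonal entry of $ADA^{-1}$ are a family of mutually orthogonal idempotents summing to $1$; a combination $\sum_k e_kd_k$ with such coefficients is a perfectly good element of $R_+^*$ without being a monomial, so nothing forces $A$ into the coset $D_n(R)S_\pi$ of a single permutation. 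This is not a peripheral technicality: the paper works over an arbitrary commutative partially ordered ring containing $\mathbb{Q}$, where $R$ typically has nontrivial idempotents, $\Gamma_n(R)$ is strictly larger than the monomial group, and $\Gamma_n(R)/D_n(R)\not\cong\Sigma_n$. Your fallback (``the pattern is globally consistent across the spectrum'') is precisely the assertion to be proved, and you offer no mechanism for it. The $n=6$ case is a second unresolved obstruction on your route, and it is not hypothetical: the outer automorphism of $\Sigma_6$ interchanges the class of a transposition with the class of $(12)(34)(56)$, which \emph{is} your $\tau$, so the quotient argument cannot distinguish the two even when the quotient is $\Sigma_6$.

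The paper closes both gaps with a different mechanism. First, from $A^2=I$ it extracts the orthogonal idempotents $e_i=a_{1i}a_{i1}$ with $\sum e_i=1$, splits $R=\bigoplus e_iR$, and by induction on size shows that \emph{every} involution of $\Gamma_n(R)$ is conjugate to a matrix built of diagonal $2\times2$ and $1\times1$ blocks --- no single permutation $\pi$ is ever needed. Second, it characterizes $S_\tau$ up to diagonal conjugation and a scalar by a property invariant under $\Phi$: it introduces the set $\mathcal F$ of diagonal matrices commuting with a conjugate of every involution, the subset $\Lambda\subset\mathcal F$ of those with minimal centralizer, and then uses the fact that $J=\Phi(S_\tau)$ commutes with some $C\in\Lambda$ while having maximal diagonal commutant. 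Commuting with $C\in\Lambda$ forces $J$ into $2\times2$ blocks $\begin{pmatrix}a&b\\c&d\end{pmatrix}$ with $a^2$ and $bc$ orthogonal idempotents summing to $1$, and the maximality condition (tested against $diag[a^2+bc,\,a^2/2+bc]$, using $1/2\in R$) kills $a^2$, hence $a=d=0$ and $bc=1$ in every block. That single centralizer computation simultaneously handles the idempotent problem and distinguishes $\tau$ from a transposition for all $n$, including $n=6$. Without something playing the role of $\Lambda$, your argument does not establish the lemma.
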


\begin{proof}
Consider a matrix
$$
A=\begin{pmatrix} a_{11}& \dots &a_{1n}\\
\vdots& \ddots & \vdots \\
a_{n1} & \dots & a_{nn}
\end{pmatrix}
$$
such that $A^2=1$. It satisfies the conditions $a_{ik}a_{kj}=0$ for
all $i\ne j$. Moreover,
$$
1=a_{11}^2+a_{12}a_{21}+\dots +a_{1n}a_{n1}.
$$
Let $a_{1i}a_{i1}=e_i$, $i=1,\dots,n$. Then  $\{e_i\}$ is a system
of mutually orthogonal central idempotents of~$R$, with the sum~$1$,
i.\,e. $R=e_1R\oplus e_2R\oplus \dots\oplus e_nR$. Let us represent
$A$ as the sum $e_1A+\dots +e_nA$. Write $A_i=e_iA$ explicitly:

\begin{align*}
A_1&=\begin{pmatrix} a_{11} & 0& \dots & 0\\
0& e_1 a_{22} &\dots & e_1 a_{2n}\\
\vdots& \vdots& \ddots& \vdots\\
0& e_1 a_{n2} &\dots& e_1 a_{nn}
\end{pmatrix},\\
A_i&=\begin{pmatrix} 0& 0&           0  & a_{1i} &0 & 0\\
                    0& e_i a_{22} & \dots& 0& \dots& e_i a_{2n}\\
0& \vdots& *&  0&*& \vdots\\
%0& \hdotsfor{7}\\
a_{i1}& 0& 0&  0&0& 0\\
%0& \hdotsfor{7}\\
0& \vdots& *&  0& *& \vdots\\
0& e_i a_{n2}& \dots &0& \dots& e_i a_{nn}
\end{pmatrix},\quad i> 1.
\end{align*}

Conjugate the matrix $A$ by the matrix
$B=e_1B+\dots+e_nB=B_1+\dots+B_n$, where $B_1=e_1\cdot I$,
$B_i=e_i\cdot S_{(2i)}$. Then $A'=B^{-1}AB=e_1A'+\dots
e_nA'=A_1'+\dots + A_n'$, where $A_1'=A_1$,
$$
A_i'=\begin{pmatrix} 0& a_{1i}& 0&\dots& 0\\
a_{i1}& 0& 0& \dots& 0\\
0& 0& \alpha_{11}& \dots & \alpha_{1,n-2}\\
\vdots& \vdots&\vdots&\ddots& \vdots\\
0& 0& \alpha_{n-2,1}& \dots& \alpha_{n-2,n-2}
\end{pmatrix}, \quad i>1.
$$
Denote the matrix $\begin{pmatrix} \alpha_{11}&\dots &
\alpha_{1,n-2}\\
\vdots& \ddots& \vdots\\
\alpha_{n-2,1}& \dots& \alpha_{n-2,n-2} \end{pmatrix}$ (and in the
case
$i=1$ the matrix $\begin{pmatrix} e_1 a_{22}& \dots & e_1 a_{2n}\\
\vdots& \ddots& \vdots\\
e_1 a_{n2}& \dots& e_1 a_{nn} \end{pmatrix}$) by $\Lambda_i$. Note
that $\Lambda_i$ is a matrix over the ring $e_iR$ of  size $<n$ such
that $\Lambda_i^2=E$. Therefore we can repeat previous arguments for
the matrix $\Lambda_i$ and corresponding system of orthogonal
idempotents of $e_iR$. Finally we obtain a matrix~$\widetilde A$,
conjugate to the initial matrix~$A$, and consisting of diagonal
blocks $2\times 2$ and $1\times 1$.

Consequently every element of order two in the group $\Gamma_n(R)$
is conjugate to some matrix consisting of diagonal blocks  $2\times
2$ and $1\times 1$.

Consider the set

\begin{multline*}
 {\mathcal F}=\{ D\in D_n(R)\mid \\
 \mid \forall N\in \Gamma_n(R)
 (N^2=I\Rightarrow \exists C\in \Gamma_n(R) (D(CNC^{-1})=
 (CNC^{-1}D)))\}.
 \end{multline*}

This set consists of all  diagonal matrices~$D$ such that for every
element of the semigroup of order two there exists a matrix
conjugate to this element that commutes with~$D$.
 Then $D$ in some basis has the form $diag[\alpha_1,\alpha_1,\alpha_2,\alpha_2,\dots ,
 \alpha_k,\alpha_k,\alpha]$, if $n=2k+1$, or $diag
 [\alpha_1,\alpha_1,\dots ,\alpha_k,\alpha_k]$, if $n=2k$, ($*$)
 since
$N$  can be taken equal to  $S_\tau$.

 Then consider a set
 $$
 \Lambda=\{ D\in {\mathcal F}\mid \forall D'\in {\mathcal F}
 C_{\Gamma_n(R)} (D) \not\supset C_{\Gamma_n(R)}(D')\},
 $$
consisting of matrices from $\mathcal F$ with minimal centralizers.

Every matrix from ${\mathcal F}$ commutes (in a basis, where it has
the form ($*$)) with all matrices that are divided in this basis to
diagonal blocks $2\times 2$ (and, possibly, a block $1\times 1$ in
the end). Therefore, $\Lambda $ contains matrices with  centralizer
only of such matrices. From the other side, in every basis matrices
with given properties exist, for example these are matrices
$diag[1,1,2,2,\dots, k,k,\dots]$ (here we use supposition  that all
natural numbers are invertible).

Consider an involution (an element of order~2 in our semigroup)~$J$,
that commutes with some matrix $C\in \Lambda$ and, if it commutes
with some diagonal matrix $C'$, then $C_{\Gamma_n(R)}(C)\subset
C_{\Gamma_n(R)}(C')$. Since $J\in C_{\Gamma_n(R)}(C)$ и $C\in
\Lambda$, then $J$ consists of diagonal blocks $2\times 2$ (and,
possibly, one block  $1\times 1$ in the end). Consider one of these
blocks $J_i=\begin{pmatrix} a&b\\ c& d
\end{pmatrix}$. Since $J_i^2=I$, then $e_1=a^2$ and $e_2=bc$ are orthogonal idempotents with the sum~$1$.
We know that $J_i\cdot diag[\alpha,\beta]\ne diag [\alpha,\beta]
\cdot J_i$ for any invertible $\alpha\ne \beta$ (by the choice
of~$J$). Take $\alpha=1=a^2+bc$ and $\beta = a^2/2+bc$. Then
$\alpha$ and $\beta$ are invertible and $J_i\cdot
diag[\alpha,\beta]=diag [\alpha,\beta]\cdot J_i$. So $\alpha=\beta$,
i.\,e. $a^2=0$. Consequently, $bc=1$. Since $abc=0$, we have $a=0$.
Similarly, $d=0$. Therefore,
$$
J=\begin{pmatrix} 0& b_1&\dots& 0& 0\\
b_1^{-1}& 0& \dots& 0& 0\\
\vdots& \vdots&\ddots& \vdots&\vdots\\
0& 0& \dots& 0& b_k\\
0& 0& \dots& b_k^{-1}& 0 \end{pmatrix} \text{ или }
J=\begin{pmatrix} 0& b_1&\dots& 0& 0&0\\
b_1^{-1}& 0& \dots& 0& 0&0\\
\vdots& \vdots&\ddots& \vdots&\vdots&\vdots\\
0& 0& \dots& 0& b_k&0\\
0& 0& \dots& b_k^{-1}& 0&0\\
0&0& \dots& 0& 0& b \end{pmatrix}.
$$
Conjugating  $J$ by diagonal matrix $diag[b_1,1,b_2,1,\dots,b_k,1]$,
or, respectively, $diag[b_1,b,b_2,b,\dots,b_k,b,1]$, we obtain a
matrix $J'=S_\tau$ or, respectively, $bS_\tau$. So we found a matrix
$M\in \Gamma_n(R)$ such that $M\Phi(S_\tau)M^{-1}=bS_\tau$, $b^2=1$.
\end{proof}

\begin{lemma}\label{n3}
Let $n=3$, $\Phi$ be an automorphism of $G_n(R)$ such that
$\Phi(S_\tau)=bS_\tau$, $b^2=1$, where $\tau$ is a substitution from
the previous lemma \emph{(}in this case it is simply $(1
2)$\emph{)}. Then $\exists M\in \Gamma_{n(R)}$, such that
$\Phi'(S_\rho)=\Phi_M \circ \Phi(S_\rho)=b^{sgn\, \rho} S_\rho,
\forall \rho \in \Sigma_n$.
\end{lemma}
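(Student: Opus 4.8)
The plan is to use that $\Sigma_3$ is generated by $s=(12)$ and $t=(23)$ subject to $s^2=t^2=e$ and $(st)^3=e$, so that everything is controlled once we know $\Phi(S_s)=bS_s$ (given) and $\Phi(S_t)$. First I would note that $S_t^2=I$ forces $\Phi(S_t)^2=\Phi(I)=I$, so $\Phi(S_t)$ is an involution. By Lemma~\ref{diag}, $\Phi(D_n(R))=D_n(R)$; since $S_t$ normalizes $D_n(R)$, so does $\Phi(S_t)$, and repeating the computation of Lemma~\ref{diag} (from $A\,D\,A^{-1}\in D_n(R)$ for all diagonal $D$ one gets $a_{ik}a'_{kj}=0$ for $i\ne j$, then applies Lemma~1) shows $\Phi(S_t)$ is \emph{monomial}: $\Phi(S_t)=S_\psi D$ with $\psi\in\Sigma_3$ and $D\in D_n(R)$.

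Next I would pin down $\psi$. The map sending $\rho$ to the permutation part of $\Phi(S_\rho)$ is an automorphism of $\Sigma_3$ fixing $s$; as every automorphism of $\Sigma_3$ is inner, it is conjugation by an element centralizing $(12)$ in $\Sigma_3$, i.e. by $e$ or $(12)$. In the second case I replace $\Phi$ by $\Phi_{S_{(12)}}\circ\Phi$: since $S_{(12)}$ commutes with $bS_{(12)}$ this leaves $\Phi(S_s)=bS_s$ unchanged, while it makes the automorphism trivial. Hence I may assume $\Phi(S_t)=S_{(23)}D$. Imposing $\Phi(S_t)^2=I$ gives $D=\mathrm{diag}(\epsilon,u,u^{-1})$ with $\epsilon^2=1$ and $u\in R_+^*$. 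I then conjugate by $E=\mathrm{diag}(1,1,bu^{-1})$, whose first two entries agree so that it commutes with $S_{(12)}$ and preserves $\Phi(S_s)=bS_s$; a direct computation yields $E\,\Phi(S_t)\,E^{-1}=S_{(23)}\mathrm{diag}(\epsilon,b,b)$. The matrix $M$ asserted in the lemma is the resulting product of these diagonal and permutation conjugators.

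It remains to force $\epsilon=b$, and here the braid relation enters. With $\Phi'(S_s)=bS_s$ and $\Phi'(S_t)=S_{(23)}\mathrm{diag}(\epsilon,b,b)$, I expand both sides of $\Phi'$ applied to $S_sS_tS_s=S_tS_sS_t$. Moving the diagonal factors past the permutation matrices, the two sides become monomial matrices with permutation part $(13)$ and diagonal parts $\mathrm{diag}(b,\epsilon,b)$ and $\mathrm{diag}(\epsilon,b,\epsilon)$; equating them gives $\epsilon=b$, so $\Phi'(S_t)=bS_{(23)}$. The homomorphism property now propagates the relation: $\Phi'(S_{(13)})=(bS_s)(bS_t)(bS_s)=bS_{(13)}$, while $\Phi'(S_{(123)})=\Phi'(S_s)\Phi'(S_t)=b^2S_{(123)}=S_{(123)}$ and likewise $\Phi'(S_{(132)})=S_{(132)}$; thus $\Phi'(S_\rho)=b^{\,sgn\,\rho}S_\rho$ for every $\rho\in\Sigma_3$ (the three transpositions carrying the factor $b$, the even permutations the factor $1$, as forced by $b^2=1$).

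I expect the first step to be the main obstacle. Over a ring with nontrivial idempotents an involution normalizing the torus need not be globally monomial: it may induce different permutations on different summands $e_iR$ in a decomposition $R=e_1R\oplus\dots\oplus e_mR$ by orthogonal central idempotents, exactly the phenomenon handled in Lemma~\ref{l1}. I would resolve it by the same device — split off the orthogonal idempotents attached to $\Phi(S_t)$, establish monomiality over each $e_iR$, and then use the relations with the \emph{fixed} matrix $\Phi(S_s)=bS_s$, whose permutation part $(12)$ is common to all summands, to force a single global permutation part, so that $\psi$ is well defined on all of $\Sigma_3$. Throughout, Lemma~1 is the tool that upgrades the vanishing of a sum of nonnegative products to the vanishing of each summand.
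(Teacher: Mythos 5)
Your handling of the case where $\Phi(S_{(23)})$ is genuinely monomial is correct and matches the paper's endgame: the braid relation does force $\epsilon=b$, and your conjugation by $\mathrm{diag}(1,1,bu^{-1})$ is essentially the paper's $C'=\mathrm{diag}[1,1,a_{13}+a_{23}]$. You also correctly identify the real obstacle, namely that over a ring with nontrivial idempotents the involution $A=\Phi(S_{(23)})$ need not be monomial. But your proposed resolution of that obstacle does not work. You claim that the relations with the fixed $\Phi(S_{(12)})=bS_{(12)}$ ``force a single global permutation part.'' They cannot: on a summand $e_iR$ the permutation part $\psi_i$ of $A$ is constrained only to be an involution with $(12)\psi_i$ of order $3$, and both $(23)$ and $(13)$ satisfy this (they differ by conjugation by $(12)$, an inner automorphism of $\Sigma_3$ fixing $(12)$, so every relation holds on each summand separately). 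The paper's computation produces exactly this situation: with $e_1=a_{23}a_{32}$ and $e_2=a_{13}a_{31}$ orthogonal idempotents summing to $1$, one arrives at
$$
A=\begin{pmatrix} be_1 & 0 & a_{13}\\ 0 & be_2 & a_{23}\\ a_{31} & a_{32} & 0\end{pmatrix},
$$
which acts as $(23)$ on $e_1R$ and as $(13)$ on $e_2R$, and no identity in $\Sigma_3$ rules this out. Your step declaring that $\rho\mapsto(\text{permutation part of }\Phi(S_\rho))$ is an automorphism of $\Sigma_3$ is therefore circular: it presupposes the global monomiality you are trying to establish.

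The missing ingredient is the idempotent-refined version of your replacement $\Phi\mapsto\Phi_{S_{(12)}}\circ\Phi$: one must conjugate by $C=e_1I+e_2S_{(12)}$, i.e.\ the paper's matrix $\bigl(\begin{smallmatrix}e_1&e_2&0\\ e_2&e_1&0\\ 0&0&1\end{smallmatrix}\bigr)$, which lies in $\Gamma_3(R)$ (it is its own inverse and has nonnegative entries), commutes with $S_{(12)}$ so that $\Phi(S_{(12)})=bS_{(12)}$ is preserved, and acts as the identity on the $e_1$-summand and as conjugation by $S_{(12)}$ on the $e_2$-summand, thereby merging the two summand-wise permutation parts into a single $(23)$. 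This construction is the crux of the paper's proof of the lemma; without it your argument has a genuine gap, while with it the remainder of your computation (normalizing the diagonal part and invoking the braid relation) goes through.
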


\begin{proof}
From Lemma~2 it follows that
$$
\Phi(diag[\alpha, \alpha, \beta])=diag[\alpha',\alpha',\beta'],
\quad \forall \alpha ,\beta \in R_+^*
$$
since $\Phi(S_{(12)})=bS_{(12)}$ and the first matrix commutes with
$S_{(12)}$. If $\alpha \ne \beta$, then $\alpha' \ne \beta'$, since
$diag[\alpha,\alpha,\beta]$ can not be mapped to a scalar matrix
(the center of the semigroup). Let
$$
\Phi(S_{(23)})=A=\begin{pmatrix}
a_{11}& a_{12} & a_{13}\\
a_{21}& a_{22} & a_{23}\\
a_{31}& a_{32} & a_{33}
\end{pmatrix}
$$
Since $A^2=I$, then, as earlier, we obtain
$$a_{ik}a_{kj}=0,\quad
a_{1i}a_{i1}+a_{2i}a_{i2}+a_{3i}a_{i3}=1\eqno (\ast)$$ Note that $A$
does not commute with $diag[\alpha',\alpha',\beta']$. Therefore we
obtain that at least one of the following conditions
$$
\alpha a_{13} \ne \beta a_{13},\quad \alpha a_{23} \ne \beta
a_{23},\quad \alpha a_{31} \ne \beta a_{31}, \quad\alpha a_{32} \ne
\beta a_{32}
$$
is satisfied. Since $A^2=I$, then, as in the previous lemma,
$\xi=a_{12}a_{21}$ is an idempotent. Suppose that it is nonzero.
Then let us take ${\alpha'=1+\xi}$, $ {\beta'=1}$ ($\alpha'$ is
invertible, and the inverse element is $1-\xi/2$). Then no one of
these above conditions is satisfied. We come to  contradiction, so
$a_{12}a_{21}=0$. Since $A^2=I$ we have $a_{11}^2+a_{13}a_{31}=1$.
Multiplying this equality to $a_{12}$, we obtain $a_{12}=0$.
Similarly, $a_{21}=0$. Now we use the fact that
$S_{(12)}S_{(23)}=S_{(123)}$, i.e. it has order $3$, and
consequently $(S_{(12)}A)^3=I$. Thus, using $(\ast)$, we obtain the
conditions
$$
a_{11}a_{23}a_{32}+a_{22}a_{13}a_{31}=b,\quad a_{11}^2a_{22}=0,\quad
a_{11}a_{22}^2=0,\eqno(\ast\ast)
$$
From  $(\ast)$ it follows $a_{11}^3=a_{11}, a_{22}^3=a_{22},
a_{11}a_{23}a_{32}=a_{11}, a_{22}a_{13}a_{31}=a_{22}$, hence
$a_{11}+a_{22}=b$. From the other side,
$a_{11}a_{23}a_{32}+a_{22}a_{13}a_{31}+a_{33}^3=b$, and therefore
$a_{33}^3=0=a_{33}$. Consequently we can rewrite $(\ast)$ in the
form
$$
a_{13}a_{31}+a_{23}a_{32}=1, \quad a_{11}^2+a_{31}a_{13}=1,\quad
a_{22}^2+a_{32}a_{23}=1
$$
Denote $e_1=a_{23}a_{32},\quad e_2=a_{13}a_{31}$. These are
orthogonal idempotents with the sum $1$. Consequently we know that
${a_{11}e_1+a_{22}e_2=b}$ (it is one of equalities $(\ast\ast)$). So
$a_{11}e_1=be_1$, but since $a_{11}$ is orthogonal to $e_2$, then it
belongs to the ring $e_1R$, and so $a_{11}=ba_{11}e_1=be_1$.
Similarly we obtain $a_{22}=be_2$, and the matrix $A$ has the form
$$
A=\begin{pmatrix}
be_{1}& 0 & a_{13}\\
0& be_{2} & a_{23}\\
a_{31}& a_{32} & 0
\end{pmatrix}
$$
Take now the matrix
$$ C=\begin{pmatrix}
e_{1}& e_{2} & 0\\
e_{2}& e_{1} & 0\\
0& 0 &1
\end{pmatrix}.
$$
It is invertible (inverse to itself) and commutes with $S_{(12)}$,
therefore under the automorphism $\Phi_C$ the matrix $S_{(12)}$ is
mapped to $bS_{(12)}$. The matrix $A$ under this automorphism is
mapped to the matrix
$$
A'=\begin{pmatrix}
b& 0 & 0\\
0& 0 & b(a_{13}+a_{23})\\
0& b(a_{31}+a_{32}) & 0
\end{pmatrix}
$$
Note that $(a_{13}+a_{23})(a_{31}+a_{32})=e_1+e_2=1$. Now let us
take the matrix $C'=diag[1,1,a_{13}+a_{23}]$ (it also commutes with
$S_{(12)}$) and conjugate  $A'$ by this matrix. We obtain
$bS_{(23)}$. Since the  symmetric group is generated by
substitutions $(12)$ and $(23)$, we get to the obtained automorphism
$\Phi'=\Phi_M\circ\Phi$. The lemma is proved.
\end{proof}

\begin{lemma}
Let  $n=4$, $\Phi$ be an automorphism of the semigroup of
nonnegative matrices such that $\Phi(S_\tau)=S_\tau$, where $\tau$
is a substitution from Lemma~\emph{3} \emph{(}in our case it is  $(1
2)(34))$. Let $\rho= (13)(24)$. Then $\exists M\in \Gamma_{n(R)}$,
such that $\Phi_M \circ \Phi(S_\rho)=S_\rho$ and $\Phi_M\circ
\Phi(S_\tau)=S_\tau$.
\end{lemma}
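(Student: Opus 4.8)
The plan is to analyze the single matrix $A:=\Phi(S_\rho)$ and then produce $M$ by hand. First I record the cheap structural facts. By Lemma~\ref{diag}, $A\in\Gamma_4(R)$; since $\rho^2=e$ we have $A^2=I$; and since $\rho$ and $\tau$ commute while $\Phi(S_\tau)=S_\tau$, the matrix $A$ commutes with $S_\tau$. The decisive observation is that in $\Sigma_4$ all three double transpositions are conjugate, e.g. $\rho=(23)\,\tau\,(23)^{-1}$ and $\tau\rho=(13)\,\tau\,(13)^{-1}$. Applying $\Phi$ and using $\Phi(S_\tau)=S_\tau$ together with Lemma~\ref{diag} (so that $\Phi(S_{(23)}),\Phi(S_{(13)})\in\Gamma_4(R)$), both $A=\Phi(S_\rho)$ and $S_\tau A=\Phi(S_\tau)\Phi(S_\rho)=\Phi(S_{\tau\rho})$ are conjugate in $\Gamma_4(R)$ to $S_\tau$. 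Since similar matrices over a commutative ring have equal trace and $\operatorname{tr}S_\tau=0$, this yields the two identities $\operatorname{tr}A=0$ and $\operatorname{tr}(S_\tau A)=0$.

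Next I would extract the shape of $A$. Writing $S_\tau$ in $2\times2$ blocks as $\operatorname{diag}(s,s)$ with $s=\left(\begin{smallmatrix}0&1\\1&0\end{smallmatrix}\right)$, the relation $S_\tau A=AS_\tau$ means every $2\times2$ block of $A$ commutes with $s$, i.e. has the centrosymmetric form $\left(\begin{smallmatrix}a&b\\b&a\end{smallmatrix}\right)$; in particular $a_{11}=a_{22}$, $a_{33}=a_{44}$, $a_{12}=a_{21}$, $a_{34}=a_{43}$. Now $\operatorname{tr}A=2(a_{11}+a_{33})=0$ and $\operatorname{tr}(S_\tau A)=2(a_{12}+a_{34})=0$ are sums of nonnegative elements, so by Lemma~1 all of $a_{11},a_{22},a_{33},a_{44}$ and $a_{12},a_{21},a_{34},a_{43}$ vanish. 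Hence the two diagonal $2\times2$ blocks of $A$ are zero and $A=\left(\begin{smallmatrix}0&Q\\R&0\end{smallmatrix}\right)$, where $Q,R$ are centrosymmetric and nonnegative; from $A^2=I$ we get $QR=RQ=I_2$, so $R=Q^{-1}\ge0$.

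To finish, take $M=\operatorname{diag}(I_2,Q)$. It is nonnegative, invertible with $M^{-1}=\operatorname{diag}(I_2,R)$, hence lies in $\Gamma_4(R)$; and it commutes with $S_\tau$ precisely because $Q$ is centrosymmetric. A direct block computation gives $MAM^{-1}=\left(\begin{smallmatrix}0&I_2\\I_2&0\end{smallmatrix}\right)=S_\rho$, while $MS_\tau M^{-1}=S_\tau$. Thus $\Phi_M\circ\Phi$ fixes both $S_\tau$ and $S_\rho$, which is the assertion.

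The step I expect to be the real crux is the second trace identity. Using only $\operatorname{tr}A=0$ clears the diagonal of $A$ but leaves an ``$S_\tau$-like'' off-diagonal block (the entries $a_{12},a_{34}$) intact; such a block survives every conjugation by a nonnegative matrix commuting with $S_\tau$ and can never be carried onto $S_\rho$, so the lemma would fail. It is exactly the fact that $\tau\rho$ (and not just $\rho$) is again conjugate to $\tau$ that produces $\operatorname{tr}(S_\tau A)=0$ and eliminates this block. The secondary point to keep in mind is positivity: one is allowed to conjugate only by nonnegative matrices commuting with $S_\tau$, which rules out the ``diagonalizing'' change of basis $\tfrac12\left(\begin{smallmatrix}1&1\\1&-1\end{smallmatrix}\right)$ and is why the explicit nonnegative conjugator $\operatorname{diag}(I_2,Q)$ must be used.
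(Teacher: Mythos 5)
Your proof is correct, but it reaches the key conclusion by a genuinely different route than the paper. Both arguments begin the same way: $A=\Phi(S_\rho)$ satisfies $A^2=I$ and commutes with $S_\tau$, so each $2\times 2$ block of $A$ has the centrosymmetric form $\left(\begin{smallmatrix}a&b\\ b&a\end{smallmatrix}\right)$, and both finish with an explicit nonnegative block-diagonal conjugator built from the off-diagonal block. The difference is in how the two diagonal blocks are killed. The paper works with the block relation $A^2+BC=I_2$ coming from $X^2=I$, observes that $A^2$ and $BC$ are central orthogonal idempotents of $M_2(R)$, and then invokes the behaviour of $\Phi$ on diagonal matrices of the form $diag[d_1,d_1,d_2,d_2]$ (via Lemma~\ref{diag}) to show that the matrix $\left(\begin{smallmatrix}A^2/2+BC&0\\ 0&I_2\end{smallmatrix}\right)$, which commutes with $X$, must be scalar; comparing with $A^2+BC=I_2$ forces $A^2=0$ and then $A=0$ from $AB=0$. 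You instead exploit that all three double transpositions are conjugate in $\Sigma_4$, so that $A=\Phi(S_{(23)})\,S_\tau\,\Phi(S_{(23)})^{-1}$ and $S_\tau A=\Phi(S_{(13)})\,S_\tau\,\Phi(S_{(13)})^{-1}$ with conjugators in $\Gamma_4(R)$; trace invariance over the commutative ring $R$ gives $\operatorname{tr}A=\operatorname{tr}(S_\tau A)=0$, and Lemma~1 (trivially extended to four summands) annihilates all eight entries of the two diagonal blocks at once. Your version buys something real: it needs neither the idempotent analysis nor any information about how $\Phi$ acts on diagonal matrices, and it uses $A^2=I$ only at the very end to get $QR=RQ=I_2$; the cost is that it is special to the commutative setting and to the fact that the relevant permutations form a single conjugacy class, whereas the paper's idempotent technique is the one that generalizes to the higher-rank lemmas (Lemma~\ref{sigmai} and beyond). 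Your closing remarks about why the second trace identity is indispensable and why only nonnegative conjugators commuting with $S_\tau$ are admissible are both accurate.
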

\begin{proof}
Let $X=\Phi(S_{\rho})$. Since $S_\tau$ commutes with $S_\rho$, then
$$
X=\begin{pmatrix}
a_1& a_2& b_1 & b_2\\
a_2& a_1 & b_2 &b_1\\
c_1& c_2 & d_1 &d_2\\
c_2 &c_1 & d_2 &d_1
\end{pmatrix}.
$$
We will consider $X$ as a matrix $2\times 2$ over matrices $2\times
2$:
$$X=\begin{pmatrix} A& B\\
C& D
\end{pmatrix},\quad A,B,C,D\in M_2(R).
$$
Since $X^2=I$, we have $A^2+BC=I_2$, and since $A=\begin{pmatrix}
a_1& a_2\\ a_2& a_1\end{pmatrix}$, we have $A^2=\begin{pmatrix}
a_1^2+a_2^2& 2a_1a_2\\ 2a_1a_2& a_1^2+a_2^2\end{pmatrix}$. Therefore
the matrices $A^2$ and (similarly) $BC$ have equal elements on the
diagonal. Since the sum of matrices $A^2$ and $BC$ has zeros outside
of the main diagonal, then the matrices $A^2$ and $BC$ are diagonal
(and even are scalar ). Consequently, they are central idempotents
of the matrix ring $2\times 2$. Note later that the matrix
$diag[d_1,d_1,d_2,d_2]$, where $d_1\ne d_2,\,\,\, d_1,d_2\in R^*_+$,
is mapped under this automorphism to the matrix of the same form,
i.e. $diag[d'_1,d'_1,d'_2,d'_2],\,\,\,d'_1\ne d'_2$, because a given
matrix is diagonal, commutes with $S_\tau$ and is not scalar. Let us
consider the matrix $\begin{pmatrix} A^2/2+BC& 0\\
0& I_2
\end{pmatrix}$. It has the form described above and commutes with $X$ (to check it we use the fact that
$B$ and $C$ commute, and also use the equalities $B^2C=B$ and
$BC^2=C$, that are obtained from $A^2+BC=I_2$ by multiplying to $B$
and $C$). Consequently, $A^2/2+BC=I_2=A^2+BC,\,\, A^2=0,\,\,BC=I_2$,
and since $ABC=0$, then $A=0$, similarly $D=0$. Now it is clear that
the obtained matrix is
$M=\begin{pmatrix} C& 0\\
0& I_2
\end{pmatrix}$ (note, that it commutes with $S_\tau$).
Therefore lemma is proved.
\end{proof}

\begin{lemma}\label{n4}
Let $n=4$, $\Phi$ be an automorphism constructed in the previous
lemma.
 Then $\exists M\in \Gamma_n(R)$ and an involution $a\in R^*_+$, such that
$\Phi_M \circ \Phi(S_\sigma)=a^{sgn\, \sigma}S_\sigma$ for any
substitution~$\sigma$.
\end{lemma}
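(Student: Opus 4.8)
The plan is to exploit that $\Phi$ already fixes the images of the Klein four-group. Set $v_1=(12)(34)$, $v_2=(13)(24)$, $v_3=(14)(23)$, $V=\{e,v_1,v_2,v_3\}$ and $S_V=\{S_v\mid v\in V\}$. By the previous lemma $\Phi(S_{v_1})=S_{v_1}$ and $\Phi(S_{v_2})=S_{v_2}$; since $S$ is a homomorphism and $v_1v_2=v_3$, also $\Phi(S_{v_3})=\Phi(S_{v_1})\Phi(S_{v_2})=S_{v_3}$, so $\Phi$ fixes every $S_v$, $v\in V$. Because $\Sigma_4=\langle V,(123),(12)\rangle$ with $A_4=\langle V,(123)\rangle$, and because both $\Phi$ and $S$ are multiplicative, it suffices to normalize the image of the $3$-cycle $(123)$ (which then fixes all of $S_{A_4}$) and afterwards the image of the single transposition $(12)$. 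I record the conjugation actions to be used: $(123)$ cyclically permutes $v_1\mapsto v_3\mapsto v_2\mapsto v_1$, while $(12)$ fixes $v_1$ and interchanges $v_2,v_3$.

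I would treat the $3$-cycle first, and this I expect to be the main obstacle. Put $Y=\Phi(S_{(123)})$; then $Y^3=\Phi(S_{(123)^3})=I$, and since $\Phi$ fixes each $S_{v_i}$ one has $Y S_{v_i}Y^{-1}=\Phi(S_{(123)v_i(123)^{-1}})=S_{(123)}S_{v_i}S_{(123)}^{-1}$, so $Y$ conjugates the $S_{v_i}$ exactly as $S_{(123)}$ does. Hence $Z:=Y S_{(123)}^{-1}$ commutes with all $S_v$, i.e. $Z$ lies in the centralizer of the regular representation of $V$, the commutative ``symmetric circulant'' algebra $\{\sum_{v\in V}c_vS_v\}\cong R[V]$. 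Writing $\gamma(X)=S_{(123)}XS_{(123)}^{-1}$ for the order-$3$ automorphism it induces, $Y^3=I$ becomes the norm relation $Z\,\gamma(Z)\,\gamma^2(Z)=I$, and the identity $M Y M^{-1}=MZ\gamma(M)^{-1}S_{(123)}$ shows that normalizing $Y$ to $S_{(123)}$ by a conjugator $M_1$ commuting with $S_V$ is exactly the positive coboundary equation $Z=M_1^{-1}\gamma(M_1)$. This is a Hilbert~90 situation for a cyclic action of order $3$; the real difficulty is not abstract solvability but producing $M_1$ inside the positive cone $\Gamma_n(R)$. I would handle it as in the involution analysis of Lemma~\ref{l1} and the previous lemma: extract from $Y^3=I$ a system of orthogonal central idempotents of $R$ over which $Y$ is a monomial (scaled-permutation) matrix, note that the action on $V$ forces the underlying permutation of each block into the coset $(123)V$, and then absorb the $V$-part and the scalars by a nonnegative $M_1$ commuting with $S_V$. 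The idempotent bookkeeping, and the verification that $M_1$ and $M_1^{-1}$ are genuinely nonnegative, is the technical heart of the lemma.

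After replacing $\Phi$ by $\Phi':=\Phi_{M_1}\circ\Phi$ we have $\Phi'(S_\alpha)=S_\alpha$ for every $\alpha\in A_4$, and it remains only to compute $W:=\Phi'(S_{(12)})$. For each $\alpha\in A_4$, $W S_\alpha W^{-1}=\Phi'(S_{(12)\alpha(12)})=S_{(12)}S_\alpha S_{(12)}^{-1}$, so $U:=S_{(12)}^{-1}W$ commutes with every $S_\alpha$. Since $A_4$ acts $2$-transitively on $\{1,2,3,4\}$, its centralizer in $M_4(R)$ is $R\,I+R\,J$ with $J$ the all-ones matrix; hence $W=\lambda S_{(12)}+\mu J$ for some $\lambda,\mu\in R$. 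Expanding $W^2=\Phi'(I)=I$ with $S_{(12)}J=JS_{(12)}=J$, $J^2=4J$ and $2\in R^*$ yields $\lambda^2=1$ and $\mu(\lambda+2\mu)=0$; nonnegativity of $W$ excludes $\lambda+2\mu=0$ (it would give $\lambda+\mu=-\mu\ge0$ together with $\mu\ge0$, forcing $\mu=0$), so $\mu=0$ and $W=aS_{(12)}$ with $a:=\lambda\in R_+^*$, $a^2=1$. Finally, writing any odd $\sigma$ as $\sigma=(12)\alpha$ with $\alpha\in A_4$ gives $\Phi'(S_\sigma)=aS_\sigma$, while even $\sigma$ give $\Phi'(S_\sigma)=S_\sigma$; thus $\Phi_{M_1}\circ\Phi(S_\sigma)=a^{\mathrm{sgn}\,\sigma}S_\sigma$ for all $\sigma$, which proves the lemma with $M=M_1$.
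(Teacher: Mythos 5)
Your reduction to the generators $V=\{e,(12)(34),(13)(24),(14)(23)\}$, $(123)$ and $(12)$ is a genuinely different route from the paper, which instead normalizes the $4$-cycle $(4321)$ and then the transposition $(12)$; but the step you yourself flag as ``the technical heart'' --- producing a nonnegative invertible $M_1$ commuting with $S_V$ that solves the coboundary equation $Z=M_1^{-1}\gamma(M_1)$ --- is precisely the step that is \emph{not} a routine transplant of the earlier involution analysis, and it is missing. All structural information extracted in Lemma~\ref{l1} and in the previous lemma comes from relations of the form $A^2=I$ (or $A^2=$ a known involution): combined with nonnegativity and Lemma~1 these force $a_{ik}a_{kj}=0$ for $i\ne j$ and hand you the orthogonal central idempotents for free. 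An order-three relation $Y^3=I$ yields no such orthogonality, so your claim that $Y$ is a monomial matrix over a system of central idempotents has no visible proof; nor is it clear that the norm condition $Z\,\gamma(Z)\,\gamma^2(Z)=I$ in $R[V]$ even forces the trivial-character component $c_e+c_{v_1}+c_{v_2}+c_{v_3}$ to equal $1$ (you only get that its cube is $1$, and a nonnegative cube root of $1$ need not a priori be $1$ in a partially ordered ring). The paper deliberately sidesteps order-three elements at this stage: it pins down $Y=\Phi(S_{(4321)})$ via $Y^2=S_{(13)(24)}$ together with the commutation and intertwining relations with the already-normalized involutions, reducing everything to the tractable equations $z_1^2+z_2^2=1$, $z_1z_2=0$, and conjugates by the explicit nonnegative matrix $z_2^2S_{(14)(23)}+z_1^2I_4$; the order-three relation is used only at the very end, to compare the two scalar involutions.

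A second, smaller gap: in your analysis of $W=\Phi'(S_{(12)})=\lambda S_{(12)}+\mu J$ you deduce $\mu=0$ from $\mu(\lambda+2\mu)=0$ after excluding $\lambda+2\mu=0$. That inference is invalid because $R$ need not be a domain. It is repairable in the spirit of the paper: $W\ge 0$ and $W^2=I$ give $w_{11}w_{13}=\mu^2=0$ by Lemma~1, whence $\mu\lambda=-2\mu^2=0$ and $\mu=(\mu\lambda)\lambda=0$ --- but as written the step does not go through. The final bookkeeping (writing odd $\sigma$ as $(12)\alpha$ with $\alpha\in A_4$) is fine once these two issues are resolved.
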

\begin{proof}
Let $Y=\Phi(S_{(4321)})$, where $\Phi$ is an automorphism from the
previous lemma. Since $S_{(4321)}$ commutes with $S_{(13)(24)}$, we
have that $Y$ has the form
$$
Y=\begin{pmatrix}
y_1& z_1& y_2 &z_2\\
x_1& w_1 & x_2 &w_2\\
y_2& z_2 & y_1 &z_1\\
x_2 &w_2 & x_1 &w_1
\end{pmatrix}.
$$
Now use the identity
$$
 S_{(4321)}S_{(12)(34)}=S_{(14)(23)}S_{(4321)},
$$
that implies $x_1=z_2$, $z_1=x_2$, $y_1=w_2$, $w_1=y_2$. Therefore,
$$
Y=\begin{pmatrix}
y_1& z_1& y_2 &z_2\\
z_2& y_2 & z_1 &y_1\\
y_2& z_2 & y_1 &z_1\\
z_1 &y_1 & z_2 &y_2
\end{pmatrix}.
$$
Finally, use $Y^2=S_{(13)(24)}$. We get the conditions
$y_1^2+2z_1z_2+y_2^2=0,\,\,y_1z_2+y_1z_1+y_2z_1+y_2z_2=0,\,\,
z_1^2+2y_1y_2+z_2^2=1$. Multiplying the last one to $y_1$, we obtain
$z_1^2y_1+2y_1^2y_2+z_2^2y_1=y_1$, but from the first equalities it
follows $y_1^2=0,\,\,y_1z_1=0,\,\,y_1z_2=0$ (see Lemma~1).
Therefore, $y_1=0$, similarly $y_2=0$. Consequently,
$$
Y=\begin{pmatrix}
0& z_1& 0 &z_2\\
z_2& 0 & z_1 &0\\
0& z_2 & 0 &z_1\\
z_1 &0 & z_2 &0
\end{pmatrix}=\begin{pmatrix}
0& z_1& 0 &0\\
0& 0 & z_1 &0\\
0& 0 & 0 &z_1\\
z_1 &0 & 0 &0
\end{pmatrix}
+\begin{pmatrix}
0& 0& 0 &z_2\\
z_2& 0 & 0 &0\\
0& z_2 & 0 &0\\
0 &0 & z_2 &0
\end{pmatrix}.
$$
We have $z_1^2+z_2^2=1$ and $z_1z_2=0$. Set
$M_1=z_2^2S_{(14)(23)}+z_1^2I_4$ (it is invertible, inverse to
itself). Let $\Phi_1=\Phi_{M_1}\circ \Phi$, then (since $M_1$
commutes with $S_\tau$ and $S_\rho$)
$$\Phi_1(S_{(4321)})=(z_1+z_2)S_{(4321)},\,\,\Phi_1(S_\tau)=S_\tau,\,\,\Phi_1(S_\rho)=S_\rho.$$

Now we will consider the matrix $A=\Phi_1(S_{12})$. Then from the
conditions of commutativity $S_{12}$ with $S_{(12)(34)}$ and with
$diag[\alpha,\alpha,\beta,\beta]$ (the last one under our
automorphism is mapped to some similar matrix if $\alpha, \beta \in
R^*_+$: see the previous lemma) we obtain
$$
A=\begin{pmatrix}
a_1& a_2& 0 &0\\
a_2& a_1 & 0 &0\\
0& 0 & b_1 &b_2\\
0 &0 & b_2 &b_1
\end{pmatrix}.
$$
The identity
$$
S_{(13)(24)}S_{(12)}S_{(13)(24)}S_{(12)}=S_{(12)(34)}
$$
implies the conditions for matrix elements
$$
a_1b_1+a_2b_2=0, \quad a_1b_2+a_2b_1=1.
$$
From the first equality we obtain $a_1b_1=a_2b_2=0$. Let us multiply
the second equality to $a_1$, then $a_1^2b_2+a_1a_2b_1=a_1$. Since
$X^2=I_4$ ($a_1a_2=0 ,\,\,a_1^2=1-a_2^2$), we have
$a_1=b_2(1-a_2^2)=b_2$. Similarly, $a_2=b_1$. Therefore,
$$
A=\begin{pmatrix}
a_1& a_2& 0 &0\\
a_2& a_1 & 0 &0\\
0& 0 & a_2 &a_1\\
0 &0 & a_1 &a_2
\end{pmatrix}=\begin{pmatrix}
0& a_2& 0 &0\\
a_2& 0 & 0 &0\\
0& 0 & a_2 &0\\
0 &0 & 0 &a_2
\end{pmatrix}
+\begin{pmatrix}
a_1& 0& 0 &0\\
0& a_1 & 0 &0\\
0& 0 & 0 &a_1\\
0 &0 & a_1 &0
\end{pmatrix}.
$$
Set $M'=a_1^2S_{(13)(24)}+a_2^2I_4$ (it is invertible, inverse to
itself). Let $\Phi_2=\Phi_{M'}\circ \Phi_1$, then (since $M'$
commute with $S_{(4321)}$,$S_\tau$ and $S_\rho$)
$$\Phi_2(S_{(12)})=(a_1+a_2)S_{(12)},\,\,
\Phi_2(S_{(4321)})=(z_1+z_2)S_{(4321)},\,\,
\Phi_2(S_\tau)=S_\tau,\,\,\Phi_2(S_\rho)=S_\rho.$$ Now it remains to
prove that involutions $a_1+a_2$ and $z_1+z_2$ coincide. Let
$X=\Phi_2(S_{(432)})=(a_1+a_2)(z_1+z_2)S_{(432)}$. Since
$(432)=(4321)(12)$, we have $X^3=I_4$, consequently
$(a_1+a_2)^3(z_1+z_2)^3=1$. Therefore $(a_1+a_2)(z_1+z_2)=1$, and so
$z_1+z_2=(a_1+a_2)^{-1}=a_1+a_2$.
\end{proof}

\begin{definition}\label{bloki}
Let a number $n$ be decomposed into the sum of powers of~$2$:
$$
n=2^{k_1}+2^{k_2}+\dots+2^{k_l},\quad k_1\ge k_2\ge \dots \ge k_l\ge
0.
$$
A diagonal block  $2^{k_i}\times 2^{k_i}$, $i=1,\dots,l$, of the
size $n\times n$, corresponding to basis vectors with numbers
$2^{k_1}+2^{k_2}+\dots+2^{k_{i-1}}+1,\dots,
2^{k_1}+2^{k_2}+\dots+2^{k_i}$, is denoted by $\mathbf{D_i}$.
\end{definition}

\begin{definition}\label{sigmy}
By $\sigma_i^{(j)}$, $j=1,\dots,l$, $i=1,\dots,k_i$, we denote the
substitution that acts identically on all blocks  $\mathbf{D_m}$,
except the block $\mathbf{D_j}$, and in the block $\mathbf{D_j}$ it
is the product of $2^{k_j-1}$ transpositions, every of them is
$(p,p+2^{i-1})$.

By $\sigma_i$ we denote the substitution $\sigma_{i_1}^{(1)}\cdot
\dots \cdot \sigma_{i_l}^{(l)}$, where $i_q=\min(i,k_q)$.
\end{definition}

For example, for $n=7$
$$
\sigma_1=(1,2)(3,4)(5,6),\ \sigma_2=(1,3)(2,4)(5,6),
$$
for $n=10$ \begin{align*}
 \sigma_1&=(1,2)(3,4)(5,6)(7,8)(9,10),\\
\sigma_2&=(1,3)(2,4)(5,7)(6,8)(9,10),\\
\sigma_3&=(1,5)(2,6)(3,7)(4,8)(9,10).
\end{align*}

\begin{lemma}\label{sigmai}
 Let $\Phi$ be an arbitrary automorphism of $G_n(R)$.
 Then $\exists M\in
\Gamma_n(R)$, such that $\Phi_M \circ
\Phi(S_{\sigma_i})=a_iS_{\sigma_i}$, $a_i^2=1$, $\forall
i=1,2,\dots$.
\end{lemma}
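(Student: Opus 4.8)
The plan is to normalize $S_{\sigma_1},S_{\sigma_2},\dots$ one level at a time, accumulating a single conjugating matrix $M=M_{k_1}\cdots M_1$; since $\sigma_i=\sigma_{k_1}$ for every $i>k_1$, only the finitely many substitutions $\sigma_1,\dots,\sigma_{k_1}$ need to be treated. For the base case $i=1$, observe that $\sigma_1$ is precisely the substitution $\tau=(12)(34)\dots$ of Lemma~\ref{l1} (this is checked block by block against Definition~\ref{sigmy}), so that lemma already supplies $M_1\in\Gamma_n(R)$ with $\Phi_{M_1}\Phi(S_{\sigma_1})=a_1S_{\sigma_1}$, $a_1^2=1$. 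Replacing $\Phi$ by $\Phi_{M_1}\Phi$ we may assume $\Phi(S_{\sigma_1})=a_1S_{\sigma_1}$, and we induct on $i$.

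Assume $\Phi(S_{\sigma_j})=a_jS_{\sigma_j}$ with $a_j^2=1$ for all $j<i$, and put $X=\Phi(S_{\sigma_i})$. The substitutions $\sigma_1,\dots,\sigma_{k_1}$ commute pairwise: inside each block $\mathbf{D_m}$ they realize the regular representation of the elementary abelian group $(\mathbb{Z}/2)^{k_m}$, with $\sigma_i$ corresponding to translation by the $i$-th coordinate. Hence $X$ commutes with every $S_{\sigma_j}$, $j<i$. Because $\sigma_i$ splits each block into two halves (the split at level $\min(i,k_m)$) which it interchanges, while $\sigma_1,\dots,\sigma_{i-1}$ act identically on both halves, this common-centralizer condition forces $X$ into the $2\times2$-over-blocks shape $X=\left(\begin{smallmatrix}A&B\\C&D\end{smallmatrix}\right)$ whose entries themselves carry the symmetry of $\sigma_1,\dots,\sigma_{i-1}$ --- exactly the form met in the $4\times4$ computation for $S_{(13)(24)}$. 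In addition $X^2=\Phi(S_{\sigma_i}^2)=\Phi(I)=I$.

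From $X^2=I$ one extracts, as in Lemma~\ref{l1} and the $4\times4$ case, that $A^2$ and $BC$ are orthogonal central idempotents summing to $I$. Now $X$, as the image of the block-swap $S_{\sigma_i}$, cannot commute with any non-scalar diagonal matrix of the symmetric form respecting the split (by Lemma~\ref{diag}, $\Phi$ keeps such matrices diagonal), so testing this against the diagonal matrix built from $A^2/2+BC$ forces $A^2/2+BC=I$, whence $A^2=0$, $BC=I$, and then $A=D=0$ with $B,C$ scalar along the lower symmetry. Conjugating by the matrix $M_i$ equal to $C$ on the first half of each split block and to the identity elsewhere carries $X$ to $a_iS_{\sigma_i}$, the single involution $a_i\in R_+^*$ arising as in the closing step of Lemma~\ref{n4} from an identity $a_i^2=z_1^2+z_2^2=1$, $z_1z_2=0$. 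The decisive point, and the main obstacle, is that $M_i$ must lie in the joint centralizer $C_{\Gamma_n(R)}(\{S_{\sigma_1},\dots,S_{\sigma_{i-1}}\})$, so that $\Phi_{M_i}$ fixes the earlier normalizations while installing the new one; since $C$ respects the lower symmetry this holds, but checking it requires running the $2\times2$-over-blocks bookkeeping with the correct, block-dependent sizes $2^{\min(i,k_m)}$ (the split degenerates to the top swap $\sigma_{k_m}^{(m)}$ on blocks with $k_m<i$). This is why the whole argument is organized around the recursive halving of Definition~\ref{bloki} rather than entry by entry; granting that the $4\times4$ analysis transfers to a $2^{i-1}\times2^{i-1}$ block, the induction closes and $M=M_{k_1}\cdots M_1$ is the required matrix.
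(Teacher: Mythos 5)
Your overall strategy (normalize $S_{\sigma_1}$ via Lemma~\ref{l1}, then treat $\sigma_2,\dots,\sigma_{k_1}$ one at a time with conjugating matrices that centralize the earlier ones) matches the paper's. But there is a genuine gap at the heart of your inductive step: you assert that the condition ``$X=\Phi(S_{\sigma_i})$ commutes with $S_{\sigma_1},\dots,S_{\sigma_{i-1}}$ and $X^2=I$'' forces $X$ into a block-diagonal shape, split into sub-blocks of size $2^{\min(i,k_m)}$ each of the form $\left(\begin{smallmatrix}A&B\\C&D\end{smallmatrix}\right)$. Commuting with permutation matrices never kills entries --- it only imposes repetition symmetries on them. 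Already for $n=8$, $i=2$: a matrix commuting with $S_{(12)(34)(56)(78)}$ and squaring to $I$ can perfectly well have nonzero entries joining $\{1,2,3,4\}$ to $\{5,6,7,8\}$ (the matrix $S_{\sigma_3}$ is itself such an example), so nothing so far prevents $\Phi(S_{\sigma_2})$ from failing to be block-diagonal with $4\times4$ blocks. Establishing that block form is exactly what most of the paper's proof is doing, in two stages you omit: (i) a recursive decomposition by the central orthogonal idempotents $E_i=A_2^{(1,i)}A_2^{(i,1)}$ of $M_2(R)$, showing that every involution commuting with $S_{\sigma_1}$ is \emph{conjugate} (by a conjugation fixing $S_{\sigma_1}$) to a block-diagonal matrix with $4\times4$ and $2\times2$ blocks; and (ii) the introduction of the $\Phi$-invariant sets $\mathcal F_1$ and $\Lambda_1$ of diagonal matrices with minimal centralizers, which converts the conjugacy statement into the assertion that the image $K=\Phi(S_{\sigma_2})$ \emph{itself} is block-diagonal, because $S_{\sigma_2}$ commutes with a matrix of $\Lambda_1$ and this property transfers through $\Phi$.

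The same omission undermines your next step. To force $A^2=0$ you argue that $X$ ``cannot commute with any non-scalar diagonal matrix respecting the split, since $\Phi$ keeps such matrices diagonal.'' But $S_{\sigma_i}$ commutes with a great many non-scalar diagonal matrices, so knowing that $\Phi^{-1}(D')$ is diagonal (Lemma~\ref{diag}) gives no contradiction by itself; one needs to know that the diagonal matrix built from $A^2/2+BC$ would have a centralizer not containing $C_{\Gamma_n(R)}(C)$ for the minimal $C\in\Lambda_1$, which is again supplied only by the $\mathcal F_1/\Lambda_1$ machinery (``by the choice of $K$'' in the paper). By contrast, the point you single out as ``the decisive obstacle'' --- that $M_i$ lies in the centralizer of the already-normalized $S_{\sigma_j}$ --- is the comparatively routine part. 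So the proposal identifies the right target but is missing the two ideas (the idempotent conjugacy classification and the minimal-centralizer characterization) that actually carry the proof.
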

\begin{proof}
By Lemma~\ref{l1} we can choose a matrix $M_1$ such that
$\Phi_1(S_{\sigma_1})=\Phi_{M_1} \circ
\Phi(S_{\sigma_1})=S_{\sigma_1}$.

Now consider the matrix $A_2=\Phi_1(S_{\sigma_2})$. Let $n$ be odd.
Since it commutes with $S_{\sigma_1}$, we have that
$$
A_2=\begin{pmatrix} a_{11} & a_{12} & a_{13}& a_{14}& \dots &
a_{1,n-2} & a_{1,n-1} & a_{1,n}\\
a_{12}& a_{11} & a_{14} & a_{13} &\dots & a_{1,n-1}&
a_{1,n-2} &a_{1,n}\\
a_{31}& a_{32}& a_{33}& a_{34} &\dots & a_{3,n-2}&
a_{3,n-1} & a_{3,n}\\
a_{32}& a_{31} & a_{34}& a_{33} & \dots& a_{3,n-1}&
a_{3,n-2} &a_{3,n}\\
\vdots& \vdots& \vdots& \vdots & \ddots& \vdots& \vdots\\
a_{n-2,1}& a_{n-2,2}& a_{n-2,3} & a_{n-2,4}& \dots & a_{n-2,n-2}&
a_{n-2,n-1} &a_{n-2,n}\\
a_{n-2,2}& a_{n-2,1}& a_{n-2,4}& a_{n-2,3}&\dots& a_{n-2,n-1}&
a_{n-2,n-2} &a_{n-2,n}\\
a_{n,1} &a_{n,1} & a_{n,3} & a_{n,3} &\dots& a_{n,n-2} & a_{n,n-2} &
a_{n,n}\end{pmatrix}.
$$

Then from $A_2^2=I$ it follows (if we consider the  right column
multiplying to the second row, the third column multiplying to the
fourth row, etc.) $a_{n,i}a_{i,n}=0$ for all $i=1,\dots,n-1$. Now
consider the last column, multiplying by the last row. From the
obtained equalities it follows $a_{n,n}^2=1$, i.\,e. $a_{n,n}$ is an
invertible element of~$R$. Therefore $a_{i,n}=a_{n,i}=0$ for all
$i=1,\dots, n-1$. So we can bound a given matrix for the size
$(n-1)\times (n-1)$. Consequently, we can suppose yet that the
semigroup dimension is even. In this case the matrix has the form
$$
A_2=\begin{pmatrix} a_{11} & a_{12} & a_{13}& a_{14}& \dots &
a_{1,n-1} & a_{1,n} \\
a_{12}& a_{11} & a_{14} & a_{13} &\dots & a_{1,n}&
a_{1,n-1}\\
a_{31}& a_{32}& a_{33}& a_{34} &\dots & a_{3,n-1}&
a_{3,n}\\
a_{32}& a_{31} & a_{34}& a_{33} & \dots& a_{3,n}&
a_{3,n-1}\\
\vdots& \vdots& \vdots& \vdots & \ddots& \vdots& \vdots\\
a_{n-1,1}& a_{n-1,2}& a_{n-1,3} & a_{n-1,4}& \dots & a_{n-1,n-1}&
a_{n-1,n}\\
a_{n-1,2}& a_{n-1,1}& a_{n-1,4}& a_{n-1,3}&\dots& a_{n-1,n}&
a_{n-1,n-1} \end{pmatrix}. $$

We will consider $A_2$ as a matrix $n/2\times n/2$ over matrices
$2\times 2$:
$$A_2=\begin{pmatrix} A_2^{(1,1)}& \dots& A_2^{1,n/2}\\
\vdots& \ddots & \vdots\\
A_2^{(n/2,1)}& \dots& A_2^{(n/2,n/2)}
\end{pmatrix},\quad A_2^{(1,1)},\dots,A_2^{(n/2,n/2)}\in M_2(R).
$$
Since $A_2^2=1$, we have
${A_2^{(1,1)}}^2+A_2^{(1,2)}A_2^{(2,1)}+\dots+A_2^{(1,n/2)}A_2^{(n/2,1)}=I_2$,
and since $A_2^{(i,j)}=\begin{pmatrix} a& b\\ b&
a\end{pmatrix}$, we have $A_2^{(1,i)} A_2^{(i,1)}=\begin{pmatrix} a_1a_2+b_1b_2& a_1b_2+a_2b_1\\
a_1b_2+a_2b_1& a_1a_2+b_1b_2\end{pmatrix}$. Consequently the matrix
$A_2^{(1,i)}A_2^{(i,1)}$ has equal elements on the diagonal. Since
the sum of all matrices $A_2^{(1,i)}A_2^{(i,1)}$ has zeros outside
the main diagonal, then all matrices $E_i=A_2^{(1,i)}A_2^{(i,1)}$
are diagonal, therefore they are scalar. Hence they are central
idempotents of the matrix ring $2\times 2$. Therefore $\{E_i\}$ is a
system of mutually orthogonal idempotents with the sum~$1$, i.\,e.
$M_2(R)=E_1M_2(R)\oplus E_2M_2(R)\oplus \dots\oplus E_{n/2}M_2(R)$.
Let us represent $A_2$ as $E_1A_2+\dots +E_{n/2}A_2$.

Conjugate the matrix $A_2$ by the matrix
$B=E_1B+\dots+E_nB=B_1+\dots+B_n$, where $B_1=E_1\cdot I$,
$B_i=E_i\cdot S_{(3,2i-1)(4,2i)}$. Then
$A_2'=B^{-1}A_2B=E_1A_2'+\dots E_nA_2'={A_2^{(1)}}'+\dots +
{A_2^{(n)}}'$, where
\begin{align*}
{A_2^{(1)}}'&=\begin{pmatrix} A_2^{(1,1)}& 0& \dots& 0\\
0& \alpha_{11}^{(1)}& \dots& \alpha_{1,n-1}^{(1)}\\
0& \vdots& \ddots&\vdots\\
0& \alpha_{n-1,1}^{(1)}& \dots& \alpha_{n-1,n-1}^{(1)}
\end{pmatrix},\\
{A_2^{(i)}}'&=\begin{pmatrix} 0& A_{1i}& 0&\dots& 0\\
A_{i1}& 0& 0& \dots& 0\\
0& 0& \alpha_{11}^{(i)}& \dots & \alpha_{1,n-2}^{(i)}\\
\vdots& \vdots&\vdots&\ddots& \vdots\\
0& 0& \alpha_{n-2,1}^{(i)}& \dots& \alpha_{n-2,n-2}^{(i)}
\end{pmatrix}, \quad i>1.
\end{align*}
Let us denote the matrix $\begin{pmatrix} \alpha_{11}&\dots &
\alpha_{1,n-2}\\
\vdots& \ddots& \vdots\\
\alpha_{n-2,1}& \dots& \alpha_{n-2,n-2} \end{pmatrix}$ (and in the
case $i=1$ the matrix $\begin{pmatrix} \alpha_{11}&\dots &
\alpha_{1,n-1}\\
\vdots& \ddots& \vdots\\
\alpha_{n-1,1}& \dots& \alpha_{n-1,n-1} \end{pmatrix}$)
by~$\Lambda_i$. Note that $\Lambda_i$ is a matrix over the ring
$E_iM_2(R)$ of the size $<n/2$ such that $\Lambda_i^2=I$. Therefore,
we can repeat previous arguments for the matrix $\Lambda_i$ and the
corresponding system of orthogonal idempotents of $E_iM_2(R)$.
Finally we obtain the matrix~$\widetilde A_2$, conjugate to the
initial matrix~$A_2$ and consisting of diagonal blocks  $4\times 4$
and $2\times 2$. So every element of the order in $\Gamma_n(R)$,
commuting with $S_{\sigma_1}$, in some basis (where $S_{\sigma_1}$
is not changed) consists of diagonal blocks  $4\times 4$ and
$2\times 2$ (it is clear that for matrices of odd sizes there can be
one block $1\times 1$).

Consider now the set
\begin{multline*}
 {\mathcal F}_1=\{ D\in D_n(R)\mid DS_{\sigma_1}=S_{\sigma_1}D \land \forall J\in \Gamma_n(R)
 (J^2=I\land JS_{\sigma_1} =\\
 =S_{\sigma_1} J\Rightarrow \exists C\in \Gamma_n(R) (D(CJC^{-1})=
 (CJC^{-1}D)\}.
 \end{multline*}
This set consists of diagonal matrices with pairs of equal elements
on the diagonal that commute in some basis with all matrices of the
order~$2$, that can be represented as blocks $4\times 4$ and
$2\times 2$. Therefore every element $D$ of this set in some basis
has the form
 $$
 diag[\alpha_1I_2,\alpha_1I_2,\alpha_2I_2,\alpha_2I_2,\dots ,
 \alpha_kI_2,\alpha_kI_2,I_2],
 $$
  if $n/2=2k+1$, or
  $$
  diag
 [\alpha_1I_2,\alpha_1I_2,\dots ,\alpha_kI_2,\alpha_kI_2],
 $$
  if $n/2=2k$, (if the dimension of the semigroup is odd, then we add one more diagonal element in the end) ($*$), and the corresponding basis change
  does not move $S_{\sigma_1}$, $D$
  commutes with $S_{\sigma_1}$ and with $S_{\sigma_2}$ in some basis.

 Consider the set
 $$
 \Lambda_1=\{ D\in {\mathcal F}_1\mid \forall D'\in {\mathcal F}_1
 C_{\Gamma_n(R)} (D) \not\supset C_{\Gamma_n(R)}(D')\}.
 $$

Every matrix from ${\mathcal F}_1$ commutes (in a basis where it has
the form ($*$)) with all matrices that in this basis are divided
into diagonal blocks  $4\times 4$ (and, possibly, $2\times 2$ in the
end and also, possibly, $1\times 1$ in the end). So $\Lambda_1 $
contains  matrices with centralizers only  of these matrices. From
the other side,  matrices with these properties exist in every
basis, for example these are matrices $diag[1,1,1,1,2,2,2,2\dots,
k,k,k,k,\dots]$.

Consider the involution~$K$ that is the image of $S_{\sigma_2}$.
This matrix commutes with $S_{\sigma_1}$ and with some matrix $C\in
\Lambda_1$. Moreover, if it commutes with some diagonal matrix $C'$,
then $C_{\Gamma_n(R)}(C)\subset C_{\Gamma_n(R)}(C')$. Since $K\in
C_{\Gamma_n(R)}(C)$ and $C\in \Lambda_1$, then $J$ consists of
diagonal blocks $4\times 4$ (and, possibly, one block $2\times 2$,
and also possibly one block $1\times 1$).

Consider one of these blocks ($4\times 4$)
$$
\widetilde K_i=\begin{pmatrix} A&B\\
C& D
\end{pmatrix},\quad A,B,C,D\in M_2(R).
$$

 Since $\widetilde K_i$ is an involution commuting with  $S_{\sigma_1}$ bounded on a given part of basis, then $E_1=A^2$ and $E_2=BC$
 are central orthogonal idempotents of $M_2(R)$ with the sum~$I_2$. We know that $\widetilde K_i\cdot
diag[\alpha,\alpha,\beta,\beta]\ne diag [\alpha,\alpha,\beta,\beta]
\cdot \widetilde K_i$ for any invertible $\alpha\ne \beta$ (by the
choice of~$K$). Take $\alpha I_2=A^2+BC$ and $\beta I_2 = A^2/2+BC$.
Then $\alpha$ and $\beta$ are invertible and $\widetilde K_i\cdot
diag[\alpha,\alpha,\beta, \beta]=diag [\alpha,\alpha, \beta,
\beta]\cdot \widetilde K_i$. Therefore, $\alpha=\beta$, i.\,e.
$A^2=0$. Consequently, $BC=1$. Since $ABC=0$, we have $A=0$.
Similarly $D=0$. Therefore $K$ (depending of dimension) can have one
of four forms:
\begin{gather*}
\begin{pmatrix} 0& B_1&\dots& 0& 0\\
B_1^{-1}& 0& \dots& 0& 0\\
\vdots& \vdots&\ddots& \vdots&\vdots\\
0& 0& \dots& 0& B_k\\
0& 0& \dots& B_k^{-1}& 0 \end{pmatrix}, \qquad
\begin{pmatrix} 0& B_1&\dots& 0& 0&0\\
B_1^{-1}& 0& \dots& 0& 0&0\\
\vdots& \vdots&\ddots& \vdots&\vdots&0\\
0& 0& \dots& 0& B_k&0\\
0& 0& \dots& B_k^{-1}& 0&0\\
0& 0& \dots& 0& 0& b_{k+1} \end{pmatrix}\\
\begin{pmatrix} 0& B_1&\dots& 0& 0&0&0\\
B_1^{-1}& 0& \dots& 0& 0&0&0\\
\vdots& \vdots&\ddots& \vdots&\vdots&0&0\\
0& 0& \dots& 0& B_k&0&0\\
0& 0& \dots& B_k^{-1}& 0&0&0\\
0& 0& \dots& 0& 0& 0& b_{k+1}\\
0& 0& \dots& 0& 0& b_{k+1}^{-1}&0
 \end{pmatrix},\\
\begin{pmatrix} 0& B_1&\dots& 0& 0&0&0&0\\
B_1^{-1}& 0& \dots& 0& 0&0&0&0\\
\vdots& \vdots&\ddots& \vdots&\vdots&0&0&0\\
0& 0& \dots& 0& B_k&0&0&0\\
0& 0& \dots& B_k^{-1}& 0 &0&0&0\\
0&0&\dots& 0&0&0&b_{k+1}&0\\
0&0&\dots&0&0&b_{k+1}^{-1}&0&0\\
0&0& \dots&0&0&0&0&b_{k+2}
\end{pmatrix}.
\end{gather*}
After conjugating  $K$ by the block-diagonal matrix that has the
form
$$
diag[B_1,I_2,B_2,I_2,\dots,B_k,I_2]
$$
 in the first case,
$$
diag[B_1,b_{k+1}I_2,B_2,b_{k+1}I_2,\dots, B_k,b_{k+1}I_2,1]
$$
 in the second case,
 $$
 diag[B_1,I_2,B_2,I_2,\dots,
B_k,I_2,b_{k+1},1]
$$
 in the third case,
$$
diag[B_1,b_{k+2}I_2,B_2,b_{k+2}I_2,\dots,B_k,b_{k+2}I_2,
b_{k+1},b_{k+2},1]
$$
 in the fourth case, we obtain the matrix
$K'=S_{\sigma_2}$, or $b_{k+1}S_{\sigma_2 }$, $b_{k+2}S_{\sigma_2}$,
and the matrix $S_{\sigma_1}$ is not changed. Therefore, we found a
matrix $M_2\in \Gamma_n(R)$ such that
$M\Phi(S_{\sigma_i})M_2^{-1}=b_iS_{\sigma_i}$, $i=1,2$. Continuing
this procedure we come to the obtained basis change.
\end{proof}

\begin{definition}\label{scalblocks}
\emph{We call a diagonal matrix $D\in \Gamma_n(R)$ \emph{a
block-scalar involution}, if $D^2=I$ and  $D$ is scalar in every
block~$\mathbf{D_i}$, $i=1,\dots,l$. The set (group) of all
block-scalar involutions is denoted by~$\mathcal Q$.}
\end{definition}

\begin{lemma}\label{sigmaij}
Let an automorphism  $\Phi$ of $G_n(R)$ be such that
$\Phi(S_{\sigma_i})=a_iS_{\sigma_i}$, $a_i^2=1$, $i=1,\dots, k_1$.
Then every matrix $S_{\sigma_i^{(j)}}$  under the action of~$\Phi$
is mapped to $D\cdot S_{\sigma_i^{(j)}}$, where $D\in{\mathcal Q}$.
\end{lemma}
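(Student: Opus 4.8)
The plan is to read off the shape of $Y:=\Phi(S_{\sigma_i^{(j)}})$ from three properties it inherits from $S_{\sigma_i^{(j)}}$. First, $Y^2=I$, so $Y$ is an involution. Second, $\sigma_i^{(j)}$ commutes with every $\sigma_m$ (both are built from commuting bit-flip transpositions inside the blocks), and since $\Phi$ preserves commutativity while $\Phi(S_{\sigma_m})=a_mS_{\sigma_m}$ is a central multiple of $S_{\sigma_m}$, the matrix $Y$ commutes with every $S_{\sigma_m}$. Third, writing $Y_{q,p}:=\Phi(S_{\sigma_q^{(p)}})$ and applying $\Phi$ to $S_{\sigma_m}=\prod_p S_{\sigma_{\min(m,k_p)}^{(p)}}$ gives the product identity $\prod_p Y_{\min(m,k_p),p}=a_mS_{\sigma_m}$, which is the only genuine link between the blocks. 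I would first fix the block structure of $Y$: every block-scalar matrix $\Delta=\operatorname{diag}[\delta_1 I_{2^{k_1}},\dots,\delta_l I_{2^{k_l}}]$, $\delta_r\in R_+^*$, commutes with all $S_{\sigma_q^{(p)}}$ and hence with $S_{\sigma_i^{(j)}}$; invoking the centralizer characterisation of such diagonal matrices built in the proof of Lemma~\ref{sigmai} (the sets $\mathcal F_1,\Lambda_1$ and their higher analogues, which are defined purely in semigroup terms and are therefore $\Phi$-invariant), $\Phi$ carries the group of block-scalar diagonals onto itself, preserving the block sizes. Since $\mathbb Q\subset R$, one such image has pairwise invertible differences of its block scalars, and commuting with it forces $Y$ to be block-diagonal with respect to $\mathbf D_1,\dots,\mathbf D_l$.

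On each block $\mathbf D_m$ the restriction $Y^{(m)}$ commutes with all the restrictions $S_{\sigma_m}|_{\mathbf D_m}$, which are exactly the generators $S_{\sigma_q^{(m)}}|_{\mathbf D_m}$, $q=1,\dots,k_m$, of a regular action of the elementary abelian group $(\mathbb Z/2)^{k_m}$ on the $2^{k_m}$ basis vectors of the block. Hence $Y^{(m)}$ lies in the commutant of that action, i.e.\ is a generalized circulant $\sum_g c_g S_g$ over this group, and $(Y^{(m)})^2=I$. As in Lemma~\ref{l1}, the products of matched off-diagonal entries of such an involution produce a system of orthogonal central idempotents of $R$ with sum $1$; decomposing $R$ over these idempotents reduces the problem to indecomposable components, on which a circulant involution that is not scalar on a proper summand is forced to be $\pm$ a single group element.

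It then remains to separate the blocks via the product identity together with the strict inequalities $k_1>k_2>\dots>k_l$. Distinctness of the exponents lets me isolate single blocks: only $\mathbf D_1$ has $k_p\ge k_1$, so for $k_2\le m<k_1$ the product $\sigma_{k_1}\sigma_m$ collapses to $\sigma_{k_1}^{(1)}\sigma_m^{(1)}$ and is supported on $\mathbf D_1$ alone, and similarly deeper combinations are supported on initial segments of blocks. Peeling off the largest block first and inducting on $l$, the identity $\prod_p Y_{\min(i,k_p),p}=a_iS_{\sigma_i}$ read blockwise forces $Y^{(j)}=d_j\,S_{\sigma_i^{(j)}}|_{\mathbf D_j}$ on the active block and $Y^{(m)}=d_mI$ on every other block, with $d_r^2=1$. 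Assembling the scalars gives $D=\operatorname{diag}[d_1I_{2^{k_1}},\dots,d_lI_{2^{k_l}}]\in\mathcal Q$ with $\Phi(S_{\sigma_i^{(j)}})=D\,S_{\sigma_i^{(j)}}$, as desired.

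The main obstacle is precisely this last separation, namely ruling out that the transposition content of $\sigma_i^{(j)}$ leaks into a block $\mathbf D_m$ with $m\ne j$ — equivalently, showing $Y^{(m)}$ is genuinely scalar rather than a nontrivial circulant involution. A single blockwise reading of the product identity is underdetermined, so the argument must use all levels $i$ at once, combined with the reduction to indecomposable idempotent components (where group-algebra involutions are rigid) and the distinctness of the block sizes, which prevents the fixed diagonal permutations $S_{\sigma_m}$ from being compatible with any cross-block transfer of the bit-flip action.
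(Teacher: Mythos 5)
Your opening reductions are sound and match the paper's starting point: $Y=\Phi(S_{\sigma_i^{(j)}})$ is an involution commuting with every $S_{\sigma_m}$, it splits along the blocks $\mathbf D_1,\dots,\mathbf D_l$ because block-scalar diagonal matrices with integer entries are preserved, and on each block it lies in the commutant of the regular $(\mathbb Z/2)^{k_m}$-action, i.e.\ $Y^{(m)}=\sum_g c_gS_g$ with $c_gc_h=0$ for $g\ne h$ and $\sum_g c_g^2=1$. The gap is exactly at the step you yourself flag as the main obstacle: the ``separation'' is not carried out, and it cannot be carried out from the constraints you have assembled. The data you use --- $Y^2=I$, commutation with all $S_{\sigma_m}$, coarse block-diagonality, and the product identities $\prod_p Y_{\min(m,k_p),p}=a_mS_{\sigma_m}$ --- admit assignments that violate the conclusion. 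Two concrete failures for $n=6$ ($\mathbf D_1$ of size $4$, $\mathbf D_2$ of size $2$): (i) the group generated by the $S_{\sigma_q^{(p)}}$ is elementary abelian and has automorphisms fixing every $\sigma_m$ while permuting the generators, e.g.\ $\sigma_1^{(1)}\mapsto\sigma_2^{(1)}$, $\sigma_2^{(1)}\mapsto\sigma_1^{(1)}$, $\sigma_1^{(2)}\mapsto\sigma_1^{(1)}\sigma_2^{(1)}\sigma_1^{(2)}$ fixes $\sigma_1$ and $\sigma_2$; (ii) if $R$ has a nontrivial idempotent $e$, $f=1-e$, then $Y_{1,1}=eI+fS_{(12)(34)}$, $Y_{1,2}=eS_{\sigma_1}+fS_{(56)}$, $Y_{2,1}=a_2\bigl(eS_{(14)(23)}+fS_{(13)(24)}\bigr)$ satisfies every one of your constraints (each component is rigid, being a unit times a single group element, so your indecomposable-component reduction does not detect it) yet is not of the form $DS_{\sigma_q^{(p)}}$ with $D\in\mathcal Q$. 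So ``using all levels $i$ at once'' does not close the argument.

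The missing ingredient, which is what the paper's induction actually runs on, is a much finer use of the diagonal subgroup. Every diagonal matrix commuting with $S_{\sigma_q}$ also commutes with $S_{\sigma_q^{(p)}}$, and $C_{D_n(R)}(S_{\sigma_q})$ is carried onto itself by $\Phi$ (Lemma~\ref{diag} together with $\Phi(S_{\sigma_q})=a_qS_{\sigma_q}$); hence $\Phi(S_{\sigma_q^{(p)}})$ commutes with all of $C_{D_n(R)}(S_{\sigma_q})$, and since $\mathbb Q\subset R$ this forces its support to respect the orbit partition of $\sigma_q$ into the two-element sets $\{p,p+2^{q-1}\}$ --- a constraint far stronger than coarse block-diagonality, which kills both counter-assignments above (this is the role of the matrices $H_1,\dots,H_{k_i}$ and $diag[1,2,1,2,\dots]$ in the paper's proof). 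Even then one is left with a residual mixture $eI+fS$ inside each $2\times2$ cell, i.e.\ entries $a,b$ with $ab=0$, $a^2+b^2=1$; the paper excludes $a\ne 0$ by a second diagonal argument (the matrix $diag[1,a^2/2+b^2]$ combined with the minimal-centralizer property of $S_{(p,p+1)}$ among matrices satisfying its conditions 1--3), and your circulant analysis contains nothing playing this role. Both diagonal-centralizer arguments would have to be added for your proof to go through.
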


\begin{proof}
We will prove this statement by induction by sizes of blocks
$\mathbf{D_j}$.

{\bf Induction basis.} For a block  $\mathbf{D_l}$ of the size
$1\times 1$ we do not need to prove anything, therefore we start
with the block $2\times 2$ (if it exists).

Suppose that the block $2\times 2$ has the place $p,p+1$. Consider
the matrix $S_{\sigma_1^{(l)}}=S_{(p,p+1)}$. It satisfies the
following properties:

1) $S_{(p,p+1)}$ commutes with all $\sigma_i$, $i=1,\dots,k_1$;

2) $S_{(p,p+1)}^2=I$;

3) if some diagonal matrix~$D$ commutes with some $S_{\sigma_i}$,
then $D$ commutes also with $S_{(p,p+1)}$;

4) if a diagonal matrix commutes with $S_{(p,p+1)}$, then it also
commutes with any other matrix satisfying the properties 1--3.

Since any matrix that is  scalar in every block $\mathbf{D_i}$,
commutes with $S_{\sigma_i}$, $i=1,\dots,k_1$, then the image
$A_{(p,p+1)}=\Phi(S_{(p,p+1)}$ has to commute, for example, with the
matrix $diag[I_{2^{k_1}}, 2\cdot I_{2^{k_2}},\dots, l\cdot
I_{2^{k_l}}]$, therefore the matrix $A_{(p,p+1)}$ can also be
divided into blocks $\mathbf{D_1},\dots, \mathbf{D_l}$. Consider the
matrix $A_{(p,p+1)}$ and its block $\mathbf{D_i}$ of the size
greater $2\times 2$. In this block there are exactly  $k_i$
different $S_{\sigma_m^{(i)}}$. Consider a set of diagonal matrices
$H_1,\dots, H_{k_i}$, identical in all blocks, except
$\mathbf{D_i}$, and in the block $\mathbf{D_i}$
\begin{align*} H_1&=diag[1,1,2,2,3,3,\dots,
2^{k_i-1},2^{k_i-1}],\\
H_2&=diag[1,2,1,2,3,4,3,4,\dots, 2^{k_i-1}-1, 2^{k_i-1},2^{k_i-1}-1,
2^{k_i-1}],\\
H_3&=diag[1,2,3,4,1,2,3,4,\dots,],\dots, \\
H_{k_i}&=diag[1,2,\dots, 2^{k_i-1},1,2,3,\dots, 2^{k_i-1}].
\end{align*}

 Every $H_j$ commutes with a corresponding matrix $S_{\sigma_j}$, therefore (the condition~3) matrix $A_{(p,p+1)}$
commutes with all $H_j$, $j=1,\dots, k_i$. Thus $A_{(p,p+1)}$ in the
block $\mathbf{D_i}$ is diagonal. From the condition~1 it follows
that $A_{(p,p+1)}$ in the block $\mathbf{D_i}$ is scalar (since it
has the order~2, then the corresponding scalar number is an
involution). Now let us look at the last block (of the size $2\times
2$). Let
in this block $A_{(p,p+1)}$ have the form $\begin{pmatrix} a&b\\
c& d\end{pmatrix}$. Since $A_{(p,p+1)}$ commutes with
$S_{\sigma_1}$, then $a=d, b=c$. Since $A_{(p,p+1)}$ has the
order~$2$, then $a^2+b^2=1$, $ab=0$.

Use now the condition~4.

Consider the diagonal matrix $H$, identical in all blocks
$\mathbf{D_i}$, except the block under consideration, and in the
block under consideration $2\times 2$ having the form $diag[1,
a^2/2+b^2]$. Then
$$
\begin{pmatrix}
1\cdot a& 1\cdot b\\
(a^2/2+b^2)\cdot b& (a^2/2+b^2)\cdot a
\end{pmatrix}=
\begin{pmatrix}
1\cdot a& (a^2/2+b^2)\cdot b\\
1\cdot b& (a^2/2+b^2)\cdot a
\end{pmatrix},
$$
since $ab=0$, i.\,e. $H$ and $A_{(p,p+1)}$ commute. Therefore, $H$
commutes with any other matrix, satisfying the conditions 1--3, and
also with $S_{(p,p+1)}$. It means that $a^2/2+b^2=1$, i.\,e.
$a^2=0$. Consequently, $a=0$ (since from $a^2+b^2=1$ and $ab=0$ it
follows $a^3=a$). Thus $A_{(p,p+1)}=DS_{(p,p+1)}$, where $D\in
{\mathcal Q}$.

{\bf Induction step.} Now we can suppose that the matrix of every
substitution $\sigma_m^{(j)}$, $j> i$, $m=1,\dots, k_{i+1}$ is
mapped  under our automorphism into itself multiplied by some matrix
from~$\mathcal Q$. Under this supposition let us consider now the
matrices of substitutions $\sigma_m^{(i)}$, $m=1,\dots, k_i$. Images
of all such matrices commute with all diagonal matrices commuting
with all $S_{\sigma_1},\dots, S_{\sigma_{k_1}}$, and therefore with
the matrix $diag[I_{2^{k_1}}, 2\cdot I_{2^{k_2}},\dots, l\cdot
I_{2^{k_l}}]$, consequently all these images are also divided into
the blocks $\mathbf{D_1},\dots, \mathbf{D_l}$.

 Let $A_{\sigma_1^{(i)}}=\Phi(S_{\sigma_1^{(i)}})$.
Since $A_{\sigma_1^{(i)}}$ commutes with all diagonal matrices
commuting with $S_{\sigma_1}$, we have that $A_\sigma$ is divided
into diagonal blocks $2\times 2$. Now we can say that  $A_\sigma$
commutes with any diagonal matrix commuting with
$\sigma_1^{(1)}\cdot \dots \cdot \sigma_1^{(i)}=\sigma_1\cdot
\sigma_1^{(i+1)}\dots \sigma_1^{(l)}$. Such diagonal matrices can
have any arbitrary elements on the diagonal, starting from the block
$\mathbf{D_{i+1}}$, therefore the matrix $A_{\sigma_1^{(i)}}$ on the
blocks $\mathbf{D_{i+1}},\dots, \mathbf{D_l}$ is diagonal. According
to the fact that it commutes with all $S_{\sigma_m^{(j)}}$, $j> i$,
we obtain that in every block $\mathbf{D_j}$, $j> i$, the matrix
$A_{\sigma_1^{(i)}}$ is scalar.

The matrix $A_{\sigma_1^{(i)}}$ commutes with all
$S_{\sigma_1},\dots, S_{\sigma_{k_1}}$, therefore inside  every
$\mathbf{D_j}$ all blocks $2\times 2$ are equal.

 Now consider
some block $\mathbf{D_j}$, $j< i$. The matrices $S_{\sigma_i},
S_{\sigma_{i+1}},\dots, S_{\sigma_{k_1}}$ coincide on the block
$\mathbf{D_i}$, hence all their pairwise products are identical on
$\mathbf{D_i}$. So the matrix $A_{\sigma_1^{(i)}}$ commutes with all
diagonal matrices, commuting with one of
$S_{\sigma_1}S_{\sigma_i}S_{\sigma_{i+1}}$,
$S_{\sigma_1}S_{\sigma_i}S_{\sigma_{i+2}}$,\dots,
$S_{\sigma_1}S_{\sigma_i}S_{\sigma_{k_1}}$. For example, we can take
the matrix
$$diag[1,2,1,2,\dots, 2,1,2,1]$$
 on $\mathbf{D_j}$, it  commutes with $A_{\sigma_1^{(i)}}$, therefore on the block $\mathbf{D_j}$ our $A_{\sigma_1^{(i)}}$
is diagonal, consequently it is scalar.

Now we only need to consider the matrix $A_{\sigma_1^{(i)}}$ on the
block $\mathbf{D_i}$. So $A_{\sigma_1^{(i)}}$ commutes with all
$S_{\sigma_i}$, $i=1,\dots, k_1$. We know that $A_{\sigma_1^{(i)}}$
on $\mathbf{D_i}$ consists of the same diagonal blocks $2\times 2$ of the form $\begin{pmatrix} a& b\\
b& a\end{pmatrix}$, and moreover (since $A_{\sigma_1^{(i)}}$ has
order~2), $a^2+b^2=1$, $ab=0$. Now we can note that the matrix
$S_{\sigma_1^{(i)}}$ is such that if some diagonal matrix commutes
with it, then it commutes with any other matrix satisfying all above
properties. Thus the matrix  $A_{\sigma_1^{(i)}}$ has the same
properties as $S_{\sigma_1^{(i)}}$. Consequently, as above (in
induction basis) we can conclude that every block has $a=0$.
Therefore on the block $\mathbf{D_i}$ we have
$A_{\sigma_1^{(i)}}=bS_{\sigma_1^{(i)}}$, $b^2=1$, what we needed.

It is clear that the proof for $S_{\sigma_m^{(i)}}$, $m> 1$, is
completely the same.
\end{proof}

\begin{definition}
\emph{By $\tau (i,p, m)$, $i=1,\dots ,l$, $p=1,\dots, k_i$,
$m=p,\dots,k_i$, we denote a substitution, that on all blocks,
except $\mathbf{D_i}$, is identical, and on the block $\mathbf{D_i}$
on the first $2^m$ basis elements coincides with $\sigma_p$, and on
other basis elements is identical. Therefore, $\tau (i,1,1)$ is just
a transposition, $\tau (i,p,k_i)=\sigma_p^{(i)}$.}
\end{definition}

\begin{lemma}\label{transp1}
Let an automorphism $\Phi$ of the semigroup $G_n(R)$ be such that
$\Phi(S_{\sigma_i})=a_iS_{\sigma_i}$, $a_i^2=1$, $i=1,\dots, k_1$.
Then $\exists M\in \Gamma_n(R)$ such that every matrix $S_{\tau
(i,1,m)}$, $i=1,\dots ,l$,  $m=1,\dots,k_i$, under the action
of~$\Phi_M\circ \Phi$ is mapped into $D\cdot S_{\tau(i,1,m)}$, where
$D\in{\mathcal Q}$.

If $\tau(i,1,m)$ is even, then $D=E$.
\end{lemma}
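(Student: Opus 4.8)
The plan is to induct on $m$. The key observation is that, for a fixed block $\mathbf{D_i}$ with $k_i\ge m+1$, one has the permutation identity
$$\tau(i,1,m+1)=\tau(i,1,m)\cdot\bigl(\sigma_{m+1}^{(i)}\,\tau(i,1,m)\,\sigma_{m+1}^{(i)}\bigr),$$
where the two factors on the right act on disjoint sets of basis vectors and hence commute. (Indexing the basis vectors of $\mathbf{D_i}$ by $\{0,1\}^{k_i}$ so that $\sigma_p^{(i)}$ flips the $p$-th coordinate, $\tau(i,1,m)$ flips the first coordinate on the vectors whose coordinates $m+1,\dots,k_i$ vanish; conjugation by $\sigma_{m+1}^{(i)}$ carries this set onto the vectors with $(m+1)$-st coordinate $1$ and remaining high coordinates $0$, so the product flips the first coordinate on the whole first $2^{m+1}$ vectors.) Granting the base case $m=1$, namely that after a suitable conjugation $\Phi_M\circ\Phi(S_{\tau(i,1,1)})=D_1S_{\tau(i,1,1)}$ with $D_1\in\mathcal Q$, the step is purely formal: applying the homomorphism $\Phi_M\circ\Phi$ to the identity, substituting the inductive value $D_mS_{\tau(i,1,m)}$ and $\Phi_M\circ\Phi(S_{\sigma_{m+1}^{(i)}})=ES_{\sigma_{m+1}^{(i)}}$ with $E\in\mathcal Q$ (Lemma~\ref{sigmaij}, applied to $\Phi_M\circ\Phi$, whose hypotheses persist since $M$ commutes with every $S_{\sigma_j}$), one may pull both $\mathcal Q$-factors to the front, because every block-scalar involution commutes with every within-block permutation matrix. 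This yields $(D_mE)^2S_{\tau(i,1,m+1)}$, and since $\mathcal Q$ is an elementary abelian $2$-group $(D_mE)^2=I$; thus $\Phi_M\circ\Phi(S_{\tau(i,1,m+1)})=S_{\tau(i,1,m+1)}$. As $\tau(i,1,m+1)$ is even for $m+1\ge 2$, this is exactly the asserted conclusion, and the induction closes.

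So the whole content sits in the base case $m=1$. Fix a block $\mathbf{D_i}$ and put $A=\Phi(S_{\tau(i,1,1)})$, an involution. The diagonal matrices commuting with all $S_{\sigma_1},\dots,S_{\sigma_{k_1}}$ are precisely those scalar on each block, and by Lemma~\ref{diag} together with the hypothesis $\Phi(S_{\sigma_j})=a_jS_{\sigma_j}$ this family is carried onto itself by $\Phi$; hence $A$ commutes with $\mathrm{diag}[1\cdot I_{2^{k_1}},2\cdot I_{2^{k_2}},\dots,l\cdot I_{2^{k_l}}]$ and is block-diagonal with respect to $\mathbf{D_1},\dots,\mathbf{D_l}$. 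On each block $\mathbf{D_j}$ with $j\ne i$ the matrix $A$ commutes with all $S_{\sigma_p^{(j)}}$ (since $\tau(i,1,1)$ is disjoint from $\mathbf{D_j}$), so its restriction lies in the commutant of the regular representation of the translation group of $\mathbf{D_j}$, i.e. $A|_{\mathbf{D_j}}=\sum_g c_g S_g$ with $c_g\in R$; the relation $A^2=I$ together with the positivity lemma (Lemma~1) forces $c_gc_{g+h}=0$ for $h\ne 0$, so exactly one $c_g$ is nonzero and $A|_{\mathbf{D_j}}$ is a scalar times a single translation.

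To finish the base case I would use that $\Phi$ carries the centralizer $C_{\Gamma_n(R)}(S_{\tau(i,1,1)})$ isomorphically onto $C_{\Gamma_n(R)}(A)$, so that $A$ shares the abstract centralizer structure of a single transposition. Matching this against the block-diagonal monomial form just obtained — and using once more $A^2=I$ and the positivity lemma to control the spectator entries — forces every translation off $\mathbf{D_i}$ to be trivial and the diagonal part to be block-scalar, so that $A=D'S_\pi$ with $D'\in\mathcal Q$ and $\pi$ a single transposition inside $\mathbf{D_i}$ from the pairing of $\sigma_1^{(i)}$; that $\pi$ is genuinely nontrivial on $\mathbf{D_i}$ follows because $\tau(i,1,1)$ fails to commute with $\sigma_2^{(i)}$, so $A$ cannot be scalar there. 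The remaining freedom is the choice of pair $\pi$; since any two pairs of the $\sigma_1^{(i)}$-pairing are interchanged by a within-block translation (a product of $\sigma_r^{(i)}$, $r\ge 2$), and every such translation commutes with all $S_{\sigma_j}$ and with $\mathcal Q$, conjugating by it moves $\pi$ to the standard transposition $(p,p+1)$ while leaving both the normalizations $\Phi(S_{\sigma_j})=a_jS_{\sigma_j}$ and the conclusion of Lemma~\ref{sigmaij} intact. Collecting these translations over all blocks produces the single matrix $M$ demanded by the lemma; the analysis for size-$2$ blocks is already contained in Lemma~\ref{sigmaij}, and the cases $n=3,4$ in Lemmas~\ref{n3} and~\ref{n4} are the prototypes.

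The main obstacle is exactly this base case. For $m\ge 2$ everything is cheap, being forced by multiplicativity once one notices that the $\mathcal Q$-factors commute through within-block permutation matrices and square to the identity. A single transposition in a block of size $\ge 4$, however, is not one of the highly symmetric generators $\sigma_p^{(i)}$ controlled by Lemma~\ref{sigmaij}, and — crucially — all the permutations whose images we already know are block-diagonal, so the block-diagonal commutation relations by themselves can neither separate $(p,p+1)$ from its within-block translates nor rule out a spurious translation on an inactive block. Overcoming this requires invoking the centralizer invariance of $\Phi$ on the full group $\Gamma_n(R)$, carefully combined with the positivity lemma and the orthogonal-idempotent bookkeeping used throughout, and then checking that the alignment translations for the different blocks can be chosen simultaneously inside the common centralizer of all the $S_{\sigma_j}$, so that no previously achieved normalization is destroyed.
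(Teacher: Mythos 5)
Your upward induction ($m\to m+1$ via the identity $\tau(i,1,m+1)=\tau(i,1,m)\cdot\sigma_{m+1}^{(i)}\tau(i,1,m)\sigma_{m+1}^{(i)}$, pulling the $\mathcal Q$-factors through and squaring them away) is correct and, for the step itself, cleaner than what the paper does. But it relocates the entire content of the lemma into the base case $m=1$, i.e.\ into determining $\Phi(S_{(p,p+1)})$ for a single transposition, and that base case is not proved: the second half of your text is a plan (``I would use that $\Phi$ carries the centralizer \dots isomorphically \dots Matching this \dots forces \dots'') rather than an argument. Moreover the one concrete deduction you do make there is false in the generality of this paper: from $c_gc_h=0$ for $g\ne h$ and $\sum_g c_g^2=1$ you conclude ``exactly one $c_g$ is nonzero,'' but over a commutative ring with nontrivial idempotents (e.g.\ $R=\mathbb Q\times\mathbb Q$ with the product order) one can have $c_{g_1}=(1,0)$, $c_{g_2}=(0,1)$: the $c_g^2$ are merely a system of orthogonal idempotents summing to $1$. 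Eliminating such mixed terms is exactly what forces the idempotent-decomposition-and-conjugation machinery of Lemmas~\ref{l1}, \ref{n3}, \ref{sigmai}, and a bare appeal to ``abstract centralizer structure'' does not substitute for it — as you yourself concede, the commutation relations with the block-diagonal images already known cannot separate $(p,p+1)$ from its within-block translates.

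The paper sidesteps this by running the induction in the opposite direction: it starts at $m=k_i$, where $\tau(i,1,k_i)=\sigma_1^{(i)}$ is already controlled by Lemma~\ref{sigmaij}, and descends to $m=1$, using the very same identity $S_{\tau(i,1,m)}S_{\sigma_{m+1}^{(i)}}S_{\tau(i,1,m)}S_{\sigma_{m+1}^{(i)}}=S_{\tau(i,1,m+1)}$ at each step — but now with the \emph{larger} permutation's image known and the smaller one unknown, so that the identity, combined with the $2\times2$-block structure forced by commutation with $S_{\sigma_1},\dots,S_{\sigma_{m}}$, yields relations $ac=bd=0$, $ad+bc=\alpha$ that are then resolved by explicit basis changes of the form $b^2E+a^2S_{\tau(i,k_i,k_i)}$ followed by a diagonal correction, all chosen to commute with the previously normalized matrices. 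Until you supply a genuine proof of the $m=1$ case (which would essentially have to reproduce that descending analysis or an equivalent), the proposal has a real gap.
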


\begin{proof}
Let us fix some block $\mathbf{D_i}$ and prove the statement for all
$\tau(i,1,m)$, $m=1,\dots, k_i$ by induction from  $k_i$ to~$1$.
Note that for $k_i$ everything is already proved. Let us make the
step from  $k_i$ to~$k_i-1$.

Every matrix $S_{\tau(i,p,m)}$ commutes with any diagonal matrix,
that commutes with $S_{\tau(i,p,k_i)}$. It gives that for every
$m=p,\dots,k_i$ the image $A_{\tau(i,p,m)}=\Phi(S_{\tau(i,p,m)})$ is
diagonal in all blocks, except  $\mathbf{D_i}$, and since all
$S_{\tau(i,p,m)}$ commute with all $S_{\sigma_q^{(p)}}$, $q\ne i$,
we directly obtain that this image in all blocks is scalar. So we
only need to consider the block $\mathbf{D_i}$.

In this block we will use induction.

The matrix $A_{\tau(i,1,k_i-1)}$ commutes with all $S_{\sigma_q}$,
$q=1,\dots,k_i-1$, therefore on $\mathbf{D_i}$ the matrix
$A_{\tau(i,1,k_i-1)}$ consists of diagonal blocks $2\times 2$, and
the first half of blocks coincide with each other, the second half
also coincide with each other. Let every block in the first half of
blocks is equal to $\begin{pmatrix} a& b\\ b& a\end{pmatrix}$, every
block in the second half is equal to $\begin{pmatrix} c& d\\
d& c\end{pmatrix}$. Now use the condition
$$
S_{\tau(i,1,k_i-1)} S_{\sigma_{k_i}} S_{\tau(i,1,k_i-1)}
S_{\sigma_{k_i}} =S_{\tau(i,1,k_i)}.
$$
It gives
$$
\begin{pmatrix}
a& b\\
b& a
\end{pmatrix}
\begin{pmatrix}
c& d\\
d& c
\end{pmatrix}=\begin{pmatrix}
0& \alpha\\
\alpha& 0
\end{pmatrix},
$$
therefore $ac=bd=0$, $ad+bc=\alpha$. Changing basis with the matrix
$b^2E+a^2S_{\tau(i,k_i,k_i)}$ (this basis change commutes with all
introduced earlier matrices), we come to the matrix
$A_{\tau(i,1,k_i-1)}$, that has in the first half of $\mathbf{D_i}$ the matrices $\begin{pmatrix} 0& \alpha\\
\alpha& 0\end{pmatrix}$, $\alpha^2=1$, and in the second half the
matrices $\beta I_2$, $\beta^2=1$. Now let us make the basis change
(only in the block $\mathbf{D_i}$) with the help of diagonal matrix
$diag[\alpha/\beta,1,\alpha/\beta,1\dots, \alpha/\beta, 1]$. Such
change does not move $A_{\sigma_q^{(i)}}$, $q> 1$, and the matrix
$A_{\tau(i,1,k_i-1)}$ under consideration has in the new basis the
obtained form.

Again from the condition
$$
A_{\tau(i,1,k_i-1)} A_{\sigma_{k_i}^{(i)}} A_{\tau(i,1,k_i-1)}
A_{\sigma_{k_i}^{(i)}} =A_{\tau(i,1,k_i)}
$$
we see that $A_{\tau(i,1,k_i)}=S_{\tau(i,1,k_i)}$.

If  $k_i-1=1$, we do not need any later considerations. Therefore we
suppose that  $k_i$ is greater than~$2$. Let us show one more step
of induction (other steps are completely similar).

At the beginning we want to show that if in the block $\mathbf{D_i}$
$A_{\tau(i,1,k_i-1)}=\alpha S_{\tau(i,1,k_i-1)}$, then necessarily
$\alpha=1$.

Note that there exists a matrix $B$ (actually it is, for example,
the matrix $S_{(1,3)}$), that commutes with all diagonal matrices,
commuting with $S_{\sigma_2}$, and also a matrix $C$ (for example,
$S_{(1,4)}$), that commutes with all diagonal matrices, commuting
with $S_{\sigma_1}S_{\sigma_2}$, such that
$$
B\cdot S_{\tau(i,1,k_i-1)} \cdot B^{-1}\cdot
S_{\tau(i,1,k_i-1)}=C\cdot S_{\tau(i,1,k_i-1)} \cdot C^{-1}.
$$

Naturally, this condition  remains under any automorphism. Since the
matrix $\Phi(B)$ commutes with all diagonal matrices, commuting with
$S_{\sigma_2}$, then it independently acts in the first and in the
second halves of the block $\mathbf{D_i}$. The same we can say about
$\Phi(C)$. Since on the second half of the block $\mathbf{D_i}$ the
matrix $A_{\tau(i,1,k_i-1)}$ is scalar (has the form $\alpha I$),
then under conjugation it does not change its form. Therefore,
$\alpha^2=\alpha$, so $\alpha=1$.

Consequently, $A_{\tau(i,1,k_i-1)}=S_{\tau(i,1,k_i-1)}$.

 Now consider the matrix
$A_{\tau(i,1,k_i-2)}=\Phi(S_{\tau(i,1,k_i-2)})$.

We know that this matrix is scalar in the blocks $\mathbf{D_j}$,
$j\ne i$. Since it commutes with all diagonal matrices, commuting
with $A_{\tau(i,1,k_i-1)}$, we have that it is diagonal on the
second half of $\mathbf{D_i}$. Let us denote the first half of
$\mathbf{D_i}$ by $\mathbf{D_i'}$, the second half by
$\mathbf{D_i''}$.

The matrix $A_{\tau(i,1,k_i-2)}$ commutes with all $S_{\sigma_q}$,
$q=1,\dots,k_i-2$, therefore: 1) on $\mathbf{D_i'}$ it consists of
diagonal blocks $2\times 2$, where the first half of block are the
same, and similarly the second half of blocks coincide with each
other; 2) on $\mathbf{D_i''}$ it has the form
$diag[aI_{2^{k_i-2}},bI_{2^{k_i-2}}]$, $a^2=b^2=1$. Let on
$\mathbf{D_i}$ $A_{\tau(i,1,k_i-1)}=\alpha S_{\tau(i,1,k_i-1)}$.

Consider the condition
$$
A_{\tau(i,1,k_i-2)} A_{\sigma_{k_i-1}^{(i)}} A_{\tau(i,1,k_i-2)}
A_{\sigma_{k_i-1}^{(i)}} =S_{\tau(i,1,k_i-1)}.
$$
Repeat arguments, similar to the previous step, and take a basis
change with some suitable matrix
${b'}^2E+{a'}^2S_{\tau(i,k_i-1,k_i-1)}$. Such a change does not move
any $A_{\sigma_q^{(i)}}$, $q> 1$, and the matrix
$A_{\tau(i,1,k_i-2)}$ under this change has now the obtained form.

Continuing this procedure for $k_i-3,\dots,1$, we come to a basis
where all $A_{\tau(i,1,q)}$, $q>1$, coincide with $S_{\tau(i,1,q)}$,
and $A_{\tau(i,1,1)}$ differs from $S_{\tau(i,1,1)}$ by some element
from $\mathcal{Q}$.

Note that we proved also the last assertion of our lemma, because
odd elements under consideration are exactly $S_{\tau(i,1,1)}$.
\end{proof}

\begin{lemma}\label{transp2}
Let an automorphism $\Phi$ of $G_n(R)$ be such that every matrix
$S_{\tau (i,1,m)}$, $i=1,\dots ,l$, $m=1,\dots,k_i$, under the
action of~$\Phi$ is mapped to $D\cdot S_{\tau(i,1,m)}$, where
$D\in{\mathcal Q}$, and if a substitution  $\tau(i,p,m)$ is even,
then $D=I$. Then $\exists M\in \Gamma_n(R)$ such that every matrix
of the substitution~$\tau$, acting independently on the blocks
$\mathbf{D_1}$,\dots, $\mathbf{D_l}$, is mapped under $\Phi_M\circ
\Phi$ into a matrix $D S_\tau$, and if $\tau$ is even, then $D=I$.
\end{lemma}

\begin{proof}
Let us consider matrices of substitutions that act identically on
all blocks except some fixed block $\mathbf{D_i}$. Their images
commute with all diagonal matrices, commuting with all images of
$S_{\sigma_1^{(i)}}$,\dots, $S_{\sigma_{k_i}^{(i)}}$, and also with
images of all $S_{\sigma_j^{(p)}}$, $p\ne i$. Consequently in all
blocks $\mathbf{D_p}$, $p\ne i$, images of such matrices are scalar.
Consider now the block $\mathbf{D_i}$.

Note that if a size of $\mathbf{D_i}$ is not greater than two, then
everything is proved. Therefore we can suppose that this size is not
smaller than $4\times 4$. For convenience we will suppose that
elements of the basis in $\mathbf{D_i}$ are numbered from~$1$
(i.\,e. it is the first block).

By the condition of the lemma everything is proved for the matrix of
$(1,2)$, i.\,e. $\Phi(S_{(1,2)})=\alpha S_{(1,2)}$, $\alpha^2=1$.
Since the transpositions $(3,4)$, $(5,6)$,\dots,
$(2^{k_i}-1,2^{k_i})$ are conjugate to $(1,2)$ by substitutions
$\sigma_2$, $\sigma_3$,\dots, $\sigma_{k_i}$ and  their products,
then $\Phi(S_{(2p-1,2p)})=\alpha S_{(2p-1,22)}$ for all $p=2,\dots,
2^{k_i-1}$.

Consider the matrix $A_{(1,3)}$, the image of $S_{(1,3)}$. Since it
commutes with all diagonal matrices,  commuting with $S_{\sigma_2}$,
and also it commutes with $S_{\sigma_2}$ itself, we have that it is
divided into diagonal blocks $4\times 4$ of the form
$$\begin{pmatrix}
a& 0& b& 0\\
0& c& 0& d\\
b& 0& a& 0\\
0& d& 0& c
\end{pmatrix},\quad a^2+b^2=c^2+d^2=1, ab=cd=0.
$$
Since the matrix $A_{(1,2)}A_{(1,3)}$ has the order three, we have
that all blocks except the first one, are scalar matrices with the
coefficient~$\alpha$. Consider the first block. From the same
condition we obtain $ac=bd=0$, $a=d$, $b=c$. Make a new basis change
with the matrix $b^2I+a^2S_{\sigma_1^{(i)}}$. Such basis change does
not move any matrices introduced earlier, and our matrix now has the
form $\alpha S_{(1,3)}$. It is clear that all other matrices of
transpositions can be obtained by conjugation of matrices considered
above.
\end{proof}

\begin{lemma}\label{glavnygemorroy}
Let $n>4$, $\Phi$ be an automorphism of $G_n(R)$. Then $\exists M\in
\Gamma_n(R)$ and an involution $a\in R^*_+$, such that
$\Phi_1(S_\sigma)=\Phi_M \circ \Phi(S_\sigma)=S_\sigma$ for every
even substitution $\sigma$ and $\Phi_1(S_\sigma)=\Phi_M \circ
\Phi(S_\sigma)=aS_\sigma$ for every odd substitution~$\sigma$.
\end{lemma}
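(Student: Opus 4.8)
The plan is to chain the reductions of Lemmas~\ref{sigmai}--\ref{transp2} and then to pass from block-preserving substitutions to arbitrary ones, controlling the diagonal factors by order-three relations. First choose, by Lemma~\ref{sigmai}, a matrix for which $\Phi(S_{\sigma_i})\mapsto a_iS_{\sigma_i}$; Lemmas~\ref{sigmaij}, \ref{transp1} and \ref{transp2} then furnish a further conjugation after which the composite automorphism, still written $\Phi$, sends every $S_\tau$ with $\tau$ acting independently inside the blocks $\mathbf{D_1},\dots,\mathbf{D_l}$ to $DS_\tau$ with $D\in{\mathcal Q}$, and with $D=I$ when $\tau$ is even. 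A preliminary remark fixes the within-block transpositions: if two transpositions lie in different blocks they are disjoint, their product is even and block-preserving, hence mapped to itself, and comparing this with the product of the two images forces the two elements of ${\mathcal Q}$ to agree; together with conjugacy by even block-preserving substitutions this shows that all within-block transpositions carry one and the same factor $D\in{\mathcal Q}$.

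The heart of the proof is a single transposition $\kappa=(p,q)$ with $p\in\mathbf{D_i}$, $q\in\mathbf{D_j}$, $i\neq j$. Put $A=\Phi(S_\kappa)$. Since $A^2=I$, the nonnegativity of the entries and Lemma~1 give, exactly as in the earlier lemmas, the relations $a_{rs}a_{st}=0$ for $r\neq t$ and a splitting of $R$ into orthogonal idempotents along each row; thus $A$ is a monomial (generalized permutation) matrix. Moreover $A$ commutes with $\Phi(S_\mu)=S_\mu$ for every even block-preserving $\mu$ fixing $p$ and $q$, in particular for the alternating groups on the coordinates of $\mathbf{D_i}$ other than $p$, of $\mathbf{D_j}$ other than $q$, and on each remaining block. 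The only involution commuting with the natural action of an alternating group on at least three points is the identity, so the permutation underlying $A$ fixes every coordinate outside $\{p,q\}$, apart from blocks of size $\le 2$ which are disposed of using the quadratic relations and the already-known block-preserving images; being a nontrivial involution, this permutation must equal $\kappa$. Hence $A=E S_\kappa$ with $E$ a diagonal involution.

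This step---pinning the permutation part of the cross-block image to $\kappa$ over a merely partially ordered commutative ring, where the commutant-of-alternating-group argument degenerates for the blocks of size $1$ and $2$---is the main obstacle; the rest is relation-chasing. To collapse the diagonal factors to one central scalar I use order-three relations. For a within-block transposition $\tau=(p,p')$ of $\mathbf{D_i}$ the product $\tau\kappa$ is the $3$-cycle $(p,q,p')$, so $\Phi(S_{\tau\kappa})^3=\Phi(S_{(\tau\kappa)^3})=I$; writing $\Phi(S_{\tau\kappa})$ as a monomial matrix with $3$-cycle permutation part and computing its cube (the product of the diagonal entries around the orbit, and the cubes of the diagonal entries at the fixed coordinates) forces every diagonal value to be $1$ apart from one common involution, so the block-scalar $D$ and the factor $E$ both reduce to a single $a\in R_+^*$, $a^2=1$, and $\Phi(S_\kappa)=aS_\kappa$. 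The same relation applied to any two transpositions $\kappa,\lambda$ sharing a point gives $(a_\kappa a_\lambda)^3=1$, whence $a_\kappa=a_\lambda$; since for $n>4$ the transpositions of $\{1,\dots,n\}$ form a single family connected by the relation ``share a point'', one value $a$ works for all of them, i.e. $\Phi(S_\kappa)=aS_\kappa$ for every transposition.

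Finally, every $\sigma\in\Sigma_n$ is a product of $m$ transpositions with $m\equiv sgn\,\sigma\pmod 2$, so multiplicativity of $\Phi$ together with $a^2=1$ gives $\Phi(S_\sigma)=a^{m}S_\sigma=a^{\,sgn\,\sigma}S_\sigma$, that is $S_\sigma$ for even $\sigma$ and $aS_\sigma$ for odd $\sigma$. Taking for $M$ the accumulated conjugating matrix completes the proof.
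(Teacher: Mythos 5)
Your overall strategy---normalize the block-preserving substitutions via Lemmas~\ref{sigmai}--\ref{transp2}, then handle one cross-block transposition and propagate by order-three relations---matches the architecture of the paper's proof. But the central step has a genuine gap: you assert that $A=\Phi(S_\kappa)$, being an involution with nonnegative entries, is a monomial matrix, and then speak of ``the permutation underlying $A$''. Over a commutative partially ordered ring with nontrivial idempotents this is false. For instance, if $e$ is a nonnegative idempotent with $1-e$ also nonnegative (e.g.\ $R=\mathbb{Q}\times\mathbb{Q}$ with the componentwise order), the matrix $\begin{pmatrix} e & 1-e\\ 1-e & e\end{pmatrix}$ squares to $I$, satisfies all the relations $a_{rs}a_{st}=0$ and the idempotent decomposition you invoke, yet is not monomial. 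This is precisely the difficulty the whole paper is organized around: involutions are only \emph{conjugate} to block forms, and each lemma must produce an explicit extra conjugating matrix built from idempotents (here, of the shape $diag[1,\dots,1,\gamma](aE+bS_{(n-2,n-1)})$) and must verify that this conjugation does not disturb the images already normalized. Your argument produces no such matrix in the main step and concludes $\Phi(S_\kappa)=aS_\kappa$ on the nose, which is not available; the parenthetical ``apart from blocks of size $\le 2$ which are disposed of using the quadratic relations'' is exactly where the missing work lives. The subsequent order-three computation (``writing $\Phi(S_{\tau\kappa})$ as a monomial matrix with $3$-cycle permutation part'') inherits the same unproved monomiality.

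The paper instead takes the cross-block transposition joining two \emph{adjacent} blocks, uses commutation with the already-normalized within-block transpositions plus the order-two relations to show the image is scalar on all far blocks, and then reduces the residual $3\times 3$ (or $2\times 2$) piece to the situation of Lemma~\ref{n3}, where the idempotent analysis and the corrective conjugation are carried out explicitly; only adjacent cross-block transpositions are needed since together with the within-block ones they generate $\Sigma_n$. Your preliminary remark (disjoint products of within-block transpositions forcing a common $D\in\mathcal Q$) and your final sign bookkeeping are fine and agree with the paper, but the proof as written does not close without the monomiality/conjugation step being supplied.
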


\begin{proof}
According to previous lemmas we can suppose that
$\Phi(S_{(p,p+1)})=D_pS_{(p,p+1)}$, $D_p\in {\mathcal Q}$,  for all
transpositions $(p,p+1)$, changing elements inside one block
$\mathbf{D_i}$, $i=1,\dots, l$. Since the product of any to
transpositions is even, and the matrix of an even substitution is
mapped to itself, then all matrices $D_p$ coincide. Since all
transpositions are conjugate, then the corresponding matrices  have
the same trace. If $D_p=D=diag[\alpha_1 I_{2^{k_1}},\dots, \alpha_l
I_{2^{k_l}}]$, then we consider to different blocks $\mathbf{D_i}$
and $\mathbf{D_j}$ of nonunit sizes. Consider transpositions
$\tau_i$ and $\tau_j$, arbitrary transpositions from corresponding
blocks. Trace of $A_{\tau_i}$ is $\alpha_1
2^{k_1}+\dots+\alpha_l2^{k_l}-2\alpha_i$, trace of $A_{\tau_j}$ is
$\alpha_1 2^{k_1}+\dots+\alpha_l2^{k_l}-2\alpha_j$. Therefore
$\alpha_i=\alpha_j$ always, except, possibly, a block $1\times 1$.

Let $n$ be odd (then there is a block $1\times 1$). Consider
$A_{(n-1,n)}=\Phi(S_{(n-1,n)})$.

We need to consider two cases: there exist a block $2\times 2$,
there is no such block.

{\bf 1.} Let us have a block $2\times 2$. It is staying just before
a block $1\times 1$.

The matrix $A_{(n-,n)}$ commutes with all transpositions from the
blocks $\mathbf{D_1}$,\dots, $\mathbf{D_{l-2}}$, and has the
order~$2$. Let us look what does it mean for the transposition
$S_{(1,2)}$ (by our assumption $n> 4$, i.\,e. $n>6$, therefore the
transposition $(1,2)$ does not intersect with $(n-2,n-1)$).

If
$$
A_{(n-1,n)}=\begin{pmatrix} a_{11}& a_{12}& a_{13}&\dots & a_{1n}\\
a_{21}& a_{22}& a_{23}& \dots& a_{2n}\\
a_{31}& a_{32}& a_{33}& \dots& a_{3n}\\
\vdots& \vdots& \vdots& \ddots& \vdots\\
a_{n1}& a_{n2}& a_{n3}&\dots& a_{nn}
\end{pmatrix},
$$
then since $A_{(n-1,n)}$ commutes with $S_{(1,2)}$, we have
$a_{2i}=a_{1i}$, $a_{i2}=a_{i1}$, $i=3,\dots,n$, $a_{22}=a_{11}$,
$a_{12}=a_{21}$. From the other side the matrix $A_{(n-1,n)}$ has
the order two, therefore
$a_{11}^2+a_{12}^2+a_{13}a_{31}+\dots+a_{1n}a_{n1}=1$ (the first
column and the first row), $a_{11}a_{12}=a_{13}a_{31}=\dots
=a_{1n}a_{n1}=0$ (the second column and the first row). As usual for
matrices of order two $a_{ik}a_{kj}=0$ for $i\ne j$. Consequently,
$a_{11}^2+a_{12}^2=1$, and since
$a_{1i}a_{11}=a_{i1}a_{11}=a_{1i}a_{12}=a_{i1}a_{12}=0$, $i=3,\dots,
n$, we have $a_{1i}=a_{i1}=0$ for all $i=3,\dots,n$.

By similar arguments for other transpositions commuting with
$A_{(n-1,n)}$, we see that $A_{(n-1,n)}$ in every block
$\mathbf{D_i}$, $i=1,\dots, l-2$, is scalar and in the end it has
the block $3\times 3$. It is clear that we can bound our
consideration on this block. But in this case we come to the
situation of Lemma~\ref{n3}, and by a suitable basis change (of the
form $diag[1,1,\dots,1,\gamma](aE+bS_{(n-2,n-1)})$, i.\,e. commuting
with all matrices considered above) we can come to the situation
$A_{(n-1,n)}=DS_{(n-1,n)}$, $D\in {\mathcal Q}$, $D$ is scalar on
the union of last two blocks. Since the matrix
$A_{(n-2,n-1)}A_{(n-1,n)}$ has order three, we obtain directly
$D=\alpha E$. It is what we needed.

{\bf 2.} Suppose now that there is no block $2\times 2$. This case
is even easier than the previous one. Initial arguments are similar
to the previous arguments, according to them we obtain that
$A_{(n-1,n)}$ in every block $\mathbf{D_i}$, $i=1,\dots, l-2$, is
scalar. Then let us consider the block that is previous to the last
one, it has size at least $4\times 4$. Since $A_{(n,n-1)}$ commutes
with all matrices of transpositions $(p,p+1)$ in this block, except
the last one, we obtain that the matrix $A_{(n-1,n)}$ is scalar on
elements of the block, except the last one, i.,e. the matrix
$A_{(n-1,n)}$ is diagonal everywhere, except the last block $2\times
2$. Then it is clear that it has the form $D S_{(n-1,n)}$ for some
diagonal matrix $D$, and after a basis change with the matrix
$diag[1,1,\dots,1,\gamma]$ we come to $A_{(n-1,n)}=\alpha
S_{(n-1,n)}$.

Completely the same arguments can be applied  to transpositions,
that join other pairs of blocks. On every step (jointing blocks
$\mathbf{D_i}$ and $\mathbf{D_{i+1}}$) it is sufficient to apply
diagonal changes, that are identical on the blocks
$\mathbf{D_1},\dots, \mathbf{D_i}$, and scalar with some
coefficient~$\gamma$ on other blocks. Clear that such changes
commute with all considered above matrices of transpositions, what
we need.

Finally we obtain  $A_{(p,p+1)}=\alpha S_{(p,p+1)}$ for all
$p=1,\dots, n-1$, so the lemma is proved.
\end{proof}

\section{Action of $\Phi'$ on diagonal matrices.}\leavevmode

In the previous section by our initial automorphism $\Phi$ we
constructed some new automorphism  $\Phi'=\Phi_M\Phi$ such that
$\Phi' (S_\sigma)=\alpha^{sgn\,\sigma}S_\sigma$, $\alpha^2=1$, for
all $\sigma\in \Sigma_n$. We suppose that such an automorphism
$\Phi'$ is fixed.

\begin{lemma}\label{d1}
If $n\ge 3$, $1/2\in R$, an automorphism $\Phi' \in Aut(G_n(R))$ is
such that $\forall \sigma\in \Sigma_n$
$\Phi'(S_\sigma)=\alpha^{sgn\,\sigma}S_\sigma$, $\alpha^2=1$, then
for all $\alpha,\beta\in R_+^*$ we have
$$
\Phi'
(diag[\alpha,\beta,\dots,\beta])=diag[\gamma,\delta,\dots,\delta],\quad
\gamma,\delta\in R_+^*.
$$
If $\alpha\ne \beta$, then $\gamma\ne \delta$.

\end{lemma}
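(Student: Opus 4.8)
The plan is to pin down $\Phi'(D)$, where $D=diag[\alpha,\beta,\dots,\beta]$, using the fact that $D$ is distinguished among diagonal matrices by its commutation pattern with permutation matrices, together with the fact that $\Phi'$ nearly fixes every $S_\sigma$. Let $H\le\Sigma_n$ be the stabilizer of the index $1$, i.e. the copy of $\Sigma_{n-1}$ permuting $\{2,\dots,n\}$. Conjugating a diagonal matrix by $S_\sigma$ merely permutes its diagonal entries by $\sigma$, and all entries of $D$ in positions $2,\dots,n$ equal $\beta$; hence $D$ commutes with $S_\sigma$ for every $\sigma\in H$. The first step is to record that the image is again diagonal: since $\Phi'=\Phi_M\Phi$ is an automorphism of $G_n(R)$, Lemma~\ref{diag} gives $\Phi'(D_n(R))=D_n(R)$, so $\Phi'(D)=diag[d_1,\dots,d_n]$ with $d_1,\dots,d_n\in R_+^*$ (the entries lie in $R_+^*$ by the definition of $D_n(R)$).

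Next I would transfer the commutation relations through $\Phi'$. Because $R$ is commutative, the factor $\alpha^{sgn\,\sigma}$ in $\Phi'(S_\sigma)=\alpha^{sgn\,\sigma}S_\sigma$ is a central scalar, so commuting with $\Phi'(S_\sigma)$ is equivalent to commuting with $S_\sigma$. Applying the homomorphism $\Phi'$ to the identity $DS_\sigma=S_\sigma D$ for $\sigma\in H$ therefore yields $\Phi'(D)S_\sigma=S_\sigma\Phi'(D)$ for every $\sigma\in H$. A diagonal matrix $diag[d_1,\dots,d_n]$ commutes with $S_\sigma$ precisely when its entry sequence is $\sigma$-invariant; letting $\sigma$ range over $H$, which acts transitively on $\{2,\dots,n\}$ (and fixes $1$), forces $d_2=\dots=d_n$ while leaving $d_1$ unconstrained. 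Setting $\gamma=d_1$ and $\delta=d_2$ gives $\Phi'(D)=diag[\gamma,\delta,\dots,\delta]$ with $\gamma,\delta\in R_+^*$, which is the first assertion.

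It remains to show $\gamma\neq\delta$ whenever $\alpha\neq\beta$, and here I would argue by contradiction using that an automorphism maps the center onto the center. If $\gamma=\delta$, then $\Phi'(D)=\gamma I$ is scalar and hence lies in $Z(G_n(R))$; applying $\Phi'^{-1}$ (again an automorphism) would then place $D=\Phi'^{-1}(\gamma I)$ in $Z(G_n(R))$ as well. But for $\alpha\neq\beta$ the matrix $D$ fails to commute with $S_{(12)}$, so $D$ is not central, a contradiction. The only step requiring genuine care—the main obstacle—is the transfer in the second paragraph: it is essential that $\Phi'(S_\sigma)$ differs from $S_\sigma$ only by a \emph{central} scalar, so that the factor $\alpha^{sgn\,\sigma}$ cancels and does not corrupt the commutation identity. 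Diagonality via Lemma~\ref{diag}, transitivity of $H$ on the remaining indices, and preservation of the center are all routine once this point is secured.
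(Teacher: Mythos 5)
Your proof is correct and follows essentially the same route as the paper's: diagonality of the image via Lemma~\ref{diag}, transfer of the commutation with permutation matrices fixing the index $1$ (the paper uses the single cycle $(2,3,\dots,n)$ where you use the whole stabilizer, which is immaterial), and non-centrality of $diag[\alpha,\beta,\dots,\beta]$ to get $\gamma\ne\delta$.
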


\begin{proof}
By Lemma~\ref{diag}
$$
\Phi'(diag [\alpha,\beta,\dots,\beta])=diag
[\gamma_1,\dots,\gamma_n].
$$
Consider the substitution $\sigma =(2,3,\dots,n)$. Since
$\Phi'(S_{\sigma})=\alpha S_{\sigma}$ then
$\Phi'(C_{D_n(R)}(S_\sigma)=C_{D_n(R)}(S_\sigma)$ Therefore $diag
[\gamma_1,\dots,\gamma_n]$ commutes with  $S_\sigma$, and therefore
 $\gamma_2=\gamma_3=\dots=\gamma_n$. So  we only need to prove that
$\gamma_1\ne\gamma_2$. It directly follows from the fact that the
matrix $diag[\alpha,\beta,\dots,\beta]$ does not commute with
$S_{(12)}$.
\end{proof}

\begin{lemma}\label{d2}
If $n\ge 3$, $1/2\in R$, an automorphism $\Phi'\in Aut(G_n(R)$ is
such that
 $\forall \sigma\in \Sigma_n$ $\Phi'(S_\sigma)=\alpha^{sgn\,\sigma}S_\sigma$, $\alpha^2=1$, then for all
 $X\in G_2(R)$ we have
$$
\Phi'\begin{pmatrix}
X& 0& \dots& 0\\
0& 1 & \dots& 0\\
\hdotsfor{2}& \ddots& \dots\\
0& \hdotsfor{2}& 1
\end{pmatrix}
=\begin{pmatrix}
Y& 0& \dots& 0\\
0& a & \dots& 0\\
\hdotsfor{2}& \ddots& \dots\\
0& \hdotsfor{2}& a
\end{pmatrix},\quad \text{ where } Y\in G_2(R), a\in R_+^{*}.
$$
\end{lemma}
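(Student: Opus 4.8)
The plan is to pin down $A:=\Phi'(M)$, where $M=\mathrm{diag}(X,I_{n-2})$ has $X\in G_2(R)$ in its upper-left $2\times 2$ corner, by feeding $\Phi'$ only those matrices whose images we already control: the permutation matrices (sent to $\alpha^{\mathrm{sgn}\,\sigma}S_\sigma$) and the diagonal matrices (sent to diagonal matrices by Lemma~\ref{diag}, with the shape of Lemma~\ref{d1}). Since $M$ is the identity on the last $n-2$ coordinates, it commutes with $S_\sigma$ for every $\sigma$ permuting only $\{3,\dots,n\}$; and since the first-two-coordinate part of each $\mathrm{diag}(1,1,t,1,\dots,1)$ is the scalar block $I_2$, it commutes with all of these as well. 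Because the scalar $\alpha^{\mathrm{sgn}\,\sigma}$ cancels under conjugation, $A$ must commute with the same permutations and with the $\Phi'$-images of these diagonal matrices. The target set $\{\mathrm{diag}(Y,aI_{n-2})\}$ is precisely the joint centralizer of these two families, so the whole proof consists of computing that centralizer.

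First suppose $n\ge 4$. Blocking $A$ along the splitting $\{1,2\}\sqcup\{3,\dots,n\}$ and imposing commutation with all $S_\sigma$, $\sigma\in\Sigma_{\{3,\dots,n\}}$, forces the two off-diagonal blocks of $A$ to have, respectively, all columns equal and all rows equal, and forces the tail block to have constant diagonal entry $p$ and constant off-diagonal entry $q$. It remains to kill the repeated off-diagonal column and row and the scalar $q$. For this I compute, from Lemma~\ref{d1} and conjugation by $S_{(1,3)}$ (whose image is $\alpha S_{(1,3)}$), that
\[
\Phi'(\mathrm{diag}(1,1,t,1,\dots,1))=\mathrm{diag}(\delta,\delta,\gamma,\delta,\dots,\delta),\qquad \gamma\ne\delta\ \text{ if } t\ne 1 .
\]
Writing out $AD=DA$ for $D=\delta I+(\gamma-\delta)E_{33}$ gives $(\gamma-\delta)a_{i3}=0$ and $(\gamma-\delta)a_{3j}=0$ for all $i,j\ne 3$. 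Together with the column/row-repetition already established, these relations (and their conjugates moving $t$ to any tail position) say that every off-diagonal-block entry and the off-diagonal tail entry $q$ are annihilated by the differences $\gamma-\delta$. Once these entries vanish we are left with $A=\mathrm{diag}(Y,pI_{n-2})$, and since $M\in\Gamma_n(R)$ and $\Phi'(\Gamma_n(R))=\Gamma_n(R)$ by Lemma~\ref{diag}, the matrix $A$ is invertible with nonnegative inverse; its block-diagonal form then gives $Y\in G_2(R)$ and $a:=p\in R_+^*$, as required. The case $n=3$ is simpler and needs no permutation step: the tail is one-dimensional, so the tail block is automatically scalar and there is no $q$; one only kills $a_{13},a_{23},a_{31},a_{32}$ using the analogous relation from $\Phi'(\mathrm{diag}(1,1,t))=\mathrm{diag}(\delta,\delta,\gamma)$.

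The one genuinely delicate point — and the step I expect to be the main obstacle — is passing from $(\gamma-\delta)x=0$ to $x=0$, since over a commutative ring with zero divisors (and such rings do occur here, as the orthogonal-idempotent computations in Lemmas~\ref{n3} and~\ref{sigmai} show) the difference $\gamma-\delta$ need not be cancellable. Because $\Phi'$ is multiplicative on diagonal matrices, $\gamma=\gamma(t)$ and $\delta=\delta(t)$ are multiplicative in $t$ and invertible (as $\mathrm{diag}(t,1,\dots,1)\,\mathrm{diag}(t^{-1},1,\dots,1)=I$), so normalizing reduces the relation to $(s-1)x=0$ with $s=\gamma\delta^{-1}\in R_+^*$, $s\ne 1$. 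I would then resolve it exactly as elsewhere in the paper: decompose $R$ through the idempotent attached to $s-1$, noting that on the summand where $s-1$ is invertible $x$ must vanish, while on the complementary summand the relevant image entries coincide so that no off-diagonal term is forced to survive; letting $t$ range over $R_+^*$ and using that all $1/m$ are available then exhausts $R$ and yields $x=0$. This idempotent bookkeeping, rather than the centralizer computation, is where the real work lies.
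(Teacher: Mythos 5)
Your reduction to a centralizer computation is the same strategy as the paper's, and the permutation part of that computation is fine. But the step you yourself identify as ``where the real work lies'' is a genuine gap, and the repair you sketch would not work. From $(\gamma-\delta)x=0$ with $\gamma\ne\delta$ one cannot in general conclude $x=0$, and there is no ``idempotent attached to $s-1$'': an arbitrary element of a commutative ring does not split $R$ into a summand on which it is invertible and a complementary summand on which it vanishes (its annihilator need not be generated by an idempotent), and even granting such splittings for each $t$ you have not shown that, as $t$ ranges over $R_+^*$, the ``bad'' summands (where $\gamma(t)=\delta(t)$) intersect trivially. Note also that Lemma~\ref{d3}, which controls how the ratio $\xi(t)\eta(t)^{-1}$ varies with $t$, comes \emph{after} this lemma and cannot be used here. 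So as written your argument does not kill the off-diagonal entries.

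The paper closes exactly this gap by running the diagonal matrices in the opposite direction, as in the proof of Lemma~\ref{d1}. Since $\Phi'(S_{(1,2)})=\alpha S_{(1,2)}$ with $\alpha$ a central unit, and $\Phi'(D_n(R))=D_n(R)$ by Lemma~\ref{diag}, the automorphism $\Phi'$ maps the set $C_{D_n(R)}(S_{(1,2)})$ of diagonal matrices with equal first two entries \emph{onto} itself (apply the same observation to $\Phi'^{-1}$ for surjectivity). Every matrix of this set commutes with $C=\mathrm{diag}(X,I_{n-2})$, because its upper-left $2\times 2$ block is scalar; hence $\Phi'(C)$ commutes with \emph{every} matrix of $C_{D_n(R)}(S_{(1,2)})$, in particular with $\mathrm{diag}[1,1,2,1,\dots,1]$ and its analogues with the $2$ in any tail position. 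For these the annihilating difference is $2-1=1$, a unit, so all entries $a_{ij}$ with $i\ne j$ and $\{i,j\}\not\subset\{1,2\}$ vanish outright, with no idempotent bookkeeping; the permutations fixing $\{1,2\}$ then equalize the tail diagonal, and invertibility of the block-diagonal result gives $a\in R_+^*$ and $Y\in G_2(R)$. In short: do not compute the image of $\mathrm{diag}(1,1,t,1,\dots,1)$ and fight with $\gamma-\delta$; use surjectivity to realize $\mathrm{diag}[1,1,2,1,\dots,1]$ itself as the image of something commuting with $C$.
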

\begin{proof}
Denote $$C=\begin{pmatrix}
X& 0& \dots& 0\\
0& 1 & \dots& 0\\
\hdotsfor{2}& \ddots& \dots\\
0& \hdotsfor{2}& 1
\end{pmatrix}
$$
Similarly to the proof of Lemma~\ref{d1} we can prove that for any
matrix
$$
A=diag [\alpha,\alpha,\beta,\dots,\beta]\in D_n(R),\quad \alpha\ne
\beta,
$$
we  have
$$
\Phi'(A)=diag [\gamma,\gamma,\delta,\dots,\delta]\in D_n(R),\quad
\gamma\ne \delta.
$$
Note that the matrix  $C$ commutes with $A$ and $S_{(3,4,\dots,n)}$,
consequently  $\Phi'(C)$ has the obtained form.

\end{proof}

\begin{lemma}\label{d3}
If $n\ge 3$, $1/2\in R$, an automorphism $\Phi'\in Aut(G_n(R))$ is
such that $\forall \sigma\in \Sigma_n$
$\Phi'(S_\sigma)=\alpha^{sgn\,\sigma}S_\sigma$, $\alpha^2=1$, then
for any
 $x_1,x_2\in R_+^*$  with
$x_1\ne x_2$,
\begin{align*}
\Phi'(A_1)&= \Phi'(diag [x_1,1,\dots,1])=diag [\xi_1,\eta_1,\dots,\eta_1],\\
\Phi'(A_2)&= \Phi'(diag [x_2,1,\dots,1])= diag
[\xi_2,\eta_2,\dots,\eta_2]
\end{align*}
we have $\xi_1\eta_1^{-1}\ne \xi_2\eta_2^{-1}$.
\end{lemma}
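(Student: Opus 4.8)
The plan is to show exactly the injectivity of the assignment $x\mapsto \xi\eta^{-1}$, where for $x\in R_+^*$ one writes $\Phi'(\mathrm{diag}[x,1,\dots,1])=\mathrm{diag}[\xi,\eta,\dots,\eta]$. First I would record what the earlier lemmas already supply: by Lemma~\ref{d1}, for each $x\in R_+^*$ the image $\Phi'(\mathrm{diag}[x,1,\dots,1])$ is a diagonal matrix $\mathrm{diag}[\xi(x),\eta(x),\dots,\eta(x)]$ with $\xi(x),\eta(x)\in R_+^*$ and all of its last $n-1$ entries equal. Since $\Phi'$ is a semigroup homomorphism and $\mathrm{diag}[x_1,1,\dots,1]\,\mathrm{diag}[x_2,1,\dots,1]=\mathrm{diag}[x_1x_2,1,\dots,1]$, both $\xi$ and $\eta$ are multiplicative, so $d(x):=\xi(x)\eta(x)^{-1}$ depends multiplicatively on $x$. (Here I use that $R_+^*$ is a group, i.e.\ the inverse of a nonnegative invertible element is again nonnegative invertible — precisely what makes $D_n(R)$ and $\Gamma_n(R)$ groups.) Thus the claim $\xi_1\eta_1^{-1}\neq\xi_2\eta_2^{-1}$ for $x_1\neq x_2$ is the statement that $d$ has trivial kernel.

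Next I would argue by contradiction. Suppose $x_1\neq x_2$ but $\xi_1\eta_1^{-1}=\xi_2\eta_2^{-1}$. Since $A_2^{-1}=\mathrm{diag}[x_2^{-1},1,\dots,1]\in\Gamma_n(R)$ and $\Phi(\Gamma_n(R))=\Gamma_n(R)$ by Lemma~\ref{diag}, the matrix $\Phi'(A_2)^{-1}$ again lies in $G_n(R)$, and
\[
\Phi'(A_1A_2^{-1})=\Phi'(A_1)\Phi'(A_2)^{-1}=\mathrm{diag}[\xi_1\xi_2^{-1},\ \eta_1\eta_2^{-1},\dots,\eta_1\eta_2^{-1}].
\]
Because $R$ is commutative, the hypothesis $\xi_1\eta_1^{-1}=\xi_2\eta_2^{-1}$ is equivalent to $\xi_1\eta_2=\xi_2\eta_1$, hence to $\xi_1\xi_2^{-1}=\eta_1\eta_2^{-1}=:\lambda\in R_+^*$. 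Therefore $\Phi'(A_1A_2^{-1})=\lambda I$ is a scalar matrix.

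Finally I would invoke centrality to close the loop. The scalar matrices $\lambda I$ ($\lambda\in R_+^*$) are exactly the center $Z(G_n(R))$: a central matrix must commute with every $B_{ij}(1)=I+E_{ij}\in G_n(R)$, which forces it to commute with each $E_{ij}$ and hence to be scalar, and scalar matrices are manifestly central. An automorphism preserves the center, so $(\Phi')^{-1}(\lambda I)\in Z(G_n(R))$ is again scalar; that is, $A_1A_2^{-1}=\mathrm{diag}[x_1x_2^{-1},1,\dots,1]$ is scalar, forcing $x_1x_2^{-1}=1$, i.e.\ $x_1=x_2$, contrary to assumption. (In the homomorphism language this is just the computation that $d(x)=1$ makes $\Phi'(\mathrm{diag}[x,1,\dots,1])$ scalar, hence $\mathrm{diag}[x,1,\dots,1]$ scalar and $x=1$, so $\ker d$ is trivial.) The only step that needs genuine care is this identification of the center with the scalar matrices together with its invariance under $\Phi'$; the remaining manipulations are direct and rely only on commutativity of $R$ and on $R_+^*$ being a group.
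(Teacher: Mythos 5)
Your proof is correct and follows essentially the same route as the paper: assume $\xi_1\eta_1^{-1}=\xi_2\eta_2^{-1}$, observe that $\Phi'(A_1A_2^{-1})$ is then a scalar (hence central) matrix, and derive a contradiction because its preimage $diag[x_1x_2^{-1},1,\dots,1]$ is not central. The only difference is that you spell out the identification of $Z(G_n(R))$ with the scalar matrices via commutation with the $B_{ij}(1)$, which the paper leaves implicit in the phrase ``but it is impossible.''
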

\begin{proof}
Suppose that for some different $x_1,x_2\in R_+^*$ we have
$\xi_1\eta_1^{-1}= \xi_2\eta_2^{-1}$, i.\,e.
\begin{align*}
\Phi'(A_1)&= \Phi'(diag [x_1,1,\dots,1])=diag [\xi,\eta,\dots,\eta]=A_1',\\
\Phi'(A_2)&= \Phi'(diag [x_2,1,\dots,1])=\alpha\cdot
 diag [\xi,\eta,\dots,\eta]=A_2',
\end{align*}
 Therefore, ${\Phi'}^{-1}
(\alpha I)={\Phi'}^{-1}(A_1' {A_2'}^{-1})=
diag[x_1x_2^{-1},1,\dots,1])=diag[\beta,1,\dots,1]$, where $1\ne
\beta\in R_+^* $, but it is impossible.  Consequently,
$\xi_1\eta_1^{-1}\ne \xi_2\eta_2^{-1}$.
\end{proof}

\section{Main theorem.}\leavevmode

In this section we will prove the main theorem (Theorem~1).

\begin{lemma}\label{otobr}
If $n\ge 3$, $1/2\in R$, an automorphism $\Phi'\in Aut(G_n(R))$ is
such that $\forall \sigma\in \Sigma_n$
$\Phi'(S_\sigma)=\alpha^{sgn\,\sigma}S_\sigma$, $\alpha^2=1$, then
there exists such a mapping $c(\cdot ): R_+\to R_+$ that for all
$x\in R_+$ $\Phi'(B_{12}(x))=B_{12}(c(x))$.
\end{lemma}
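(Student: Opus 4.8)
The plan is to first squeeze the image of $B_{12}(x)$ into a single $2\times 2$ block, normalize that block to a unipotent matrix, and finally eliminate its lower-triangular entry; this last elimination is the real difficulty, since everything before it is symmetric under the flip that interchanges the positions $(1,2)$ and $(2,1)$.

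First I would invoke Lemma~\ref{d2}. Since $B_{12}(x)=\mathrm{diag}(X,1,\dots,1)$ with $X=\begin{pmatrix}1&x\\0&1\end{pmatrix}\in G_2(R)$, the lemma gives $\Phi'(B_{12}(x))=\mathrm{diag}(Y_x,a_x,\dots,a_x)$ for some $Y_x\in G_2(R)$ and $a_x\in R_+^*$; write $Y_x=\begin{pmatrix}p_x&q_x\\r_x&s_x\end{pmatrix}$. Two families of relations are then available. From $B_{12}(x)B_{12}(y)=B_{12}(x+y)$ and $\Phi'(I)=I$ I obtain $Y_{x+y}=Y_xY_y$, $a_{x+y}=a_xa_y$, $a_0=1$, so $a\colon(R_+,+)\to(R_+^*,\times)$ and $\det Y\colon(R_+,+)\to(R^*,\times)$ are homomorphisms. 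From the diagonal conjugation $\mathrm{diag}[\lambda,1,\dots,1]\,B_{12}(x)\,\mathrm{diag}[\lambda,1,\dots,1]^{-1}=B_{12}(\lambda x)$ together with Lemma~\ref{d1} (which says $\Phi'(\mathrm{diag}[\lambda,1,\dots,1])=\mathrm{diag}[\xi,\eta,\dots,\eta]$) I get $a_{\lambda x}=a_x$ and that the diagonal entries $p_x,s_x$ and the quantity $\det Y_x$ are unchanged when $x$ is rescaled by a unit.

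These combine by an idempotent trick. Taking $\lambda=2$ (a unit, since $1/2\in R$) gives $a_{2x}=a_x$, while additivity gives $a_{2x}=a_x^2$, so $a_x$ is an invertible idempotent, hence $a_x=1$; the same argument applied to $\det Y_x$ gives $\det Y_x=1$. To pin down the diagonal of $Y_x$ I use that $B_{12}(x)$ commutes with $B_{13}(x)=S_{(23)}B_{12}(x)S_{(23)}^{-1}$: as $\Phi'$ fixes permutation matrices up to the scalar $\alpha$, the images $I+(p_x-1)E_{11}+q_xE_{12}+r_xE_{21}+(s_x-1)E_{22}$ and its $S_{(23)}$-conjugate must commute, and a short comparison of matrix units forces $q_xr_x=0$. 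Substituting $q_xr_x=0$ into $p_x=(Y_x^2)_{11}=p_x^2+q_xr_x$ (from $Y_{2x}=Y_x^2$ and scaling invariance of $p_x$) makes $p_x$ an idempotent, invertible because $\det Y_x=p_xs_x=1$; hence $p_x=1$, and symmetrically $s_x=1$.

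The main obstacle is the final step, ruling out the lower entry $r_x$, because everything so far is symmetric under the $S_{(12)}$-flip interchanging $q_x$ and $r_x$. To break this symmetry I use the positive form of the Chevalley commutator relation $B_{12}(x)B_{23}(y)=B_{13}(xy)B_{23}(y)B_{12}(x)$, every factor of which lies in $G_n(R)$ for $x,y\in R_+$ (here $n\ge 3$ is needed so that index $3$ exists). Applying $\Phi'$ and using that $\Phi'(B_{12}(x))$, $\Phi'(B_{23}(y))$, $\Phi'(B_{13}(xy))$ are now unipotent and supported in the top $3\times 3$ corner, I expand both sides and compare entries. The $(1,3)$-entry yields the multiplicativity $q_{xy}=q_xq_y$, while the crucial $(3,1)$-entry yields $r_xr_y+r_{xy}=0$; since both summands are nonnegative, Lemma~1 forces $r_{xy}=0$, and taking $y=1$ gives $r_x=0$ for every $x\in R_+$. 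With $p_x=s_x=1$, $r_x=0$ and $a_x=1$ this shows $\Phi'(B_{12}(x))=B_{12}(q_x)$, so setting $c(x)=q_x\in R_+$ completes the proof.
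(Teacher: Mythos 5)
Your proof is correct, and its decisive final step is exactly the paper's: the relation $B_{12}(x)B_{23}(y)=B_{13}(xy)B_{23}(y)B_{12}(x)$, pushed through $\Phi'$ and combined with nonnegativity of entries (Lemma~1), kills the lower-left entry. Where you genuinely diverge is in the intermediate normalization of the $2\times 2$ block. The paper first pins down $\Phi'(B_{12}(1))$, parametrizes $\Phi'(B_{12}(x))$ for units $x$ by conjugation with $diag[x,1,\dots,1]$, and obtains $\beta\gamma=0$ from the commutativity of $\Phi'(B_{12}(1))$ with $\Phi'(B_{12}(x))$ together with the injectivity statement of Lemma~\ref{d3}; only afterwards does it extend to non-unit $x$. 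You instead treat all $x\in R_+$ uniformly: the doubling trick (compare $Y_{2x}=Y_x^2$ from additivity with $Y_{2x}\sim Y_x$ from conjugation by $diag[2,1,\dots,1]$) gives $a_x=1$ and $\det Y_x=1$ at once, the relation $q_xr_x=0$ comes from commutation with the $S_{(23)}$-conjugate $\Phi'(B_{13}(x))$ rather than from Lemma~\ref{d3}, and the diagonal entries become invertible idempotents, hence $1$. This buys you a proof that never invokes Lemma~\ref{d3} and avoids the paper's separate unit/non-unit cases; the paper's route, in exchange, produces the explicit cocycle $\nu(x)=\xi(x)\eta(x)^{-1}$ that reappears in the proof of the main theorem. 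Both arguments are sound; I verified your matrix-unit computations (the commutator with $E_{13},E_{31}$ indeed forces $q_xr_x=0$, and the $(3,1)$-entry of the Chevalley relation gives $r_xr_y+r_{xy}=0$, whence $r_{xy}=0$ and, with $y=1$, $r_x=0$).
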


\begin{proof}
By Lemma~\ref{d2} we have
$$
\Phi'(B_{12}(1))=\begin{pmatrix}
\alpha& \beta&&&\\
\gamma& \delta&&&\\
&&a&&\\
&&&\ddots&\\
&&&&a
\end{pmatrix}, \quad a\in R_+^*,
\begin{pmatrix}
\alpha& \beta\\
\gamma&\delta
\end{pmatrix} \in G_2(R).
$$

Let for every $x\in R_+^*$
$$
\Phi'(diag[x,1,\dots,1)]=diag[\xi(x),\gamma(x),\dots,\gamma(x)],\quad
\xi(x),\eta(x)\in R_+^*
$$
(see Lemma~\ref{d1}).

Then for any $x\in R^*_+$
\begin{multline*}
\Phi' (B_{12}(x))=\Phi' (diag[x,1,\dots,1]
B_{12}(1) diag[x^{-1},1,\dots,1])=\\
=diag [\xi(x),\eta(x),\dots,\eta(x)]
\begin{pmatrix}
\alpha& \beta& &&\\
\gamma& \delta&&&\\
&&a&&\\
&&&\ddots&\\
&&&&a
\end{pmatrix}
diag[\xi(x)^{-1},\eta(x)^{-1},\dots ,\eta(x)^{-1}]=\\
=\begin{pmatrix}
\alpha& \nu(x)\beta& &&\\
\nu(x)^{-1}\gamma& \delta&&&\\
&&a&&\\
&&&\ddots&\\
&&&&a
\end{pmatrix}
\end{multline*}
 with $\nu(x)=\xi(x)\eta(x)^{-1}$.

By Lemma~\ref{d3} for $x_1\ne x_2$ we have $\nu(x_1)\ne \nu(x_2)$.

For every $x\in R_+$ $\Phi'(B_{12}(1))$ and $\Phi' (B_{12}(x))$
commute. Let us write this assertion in the matrix form for $x\in
R_+^*$:
\begin{multline*}
\begin{pmatrix}
\alpha& \beta\\
\gamma& \delta
\end{pmatrix}
\begin{pmatrix}
\alpha& \nu(x)\beta\\
\nu(x)^{-1}\gamma & \delta
\end{pmatrix} =\begin{pmatrix} \alpha& \nu(x)\beta\\
\nu(x)^{-1} \gamma& \delta
\end{pmatrix}
\begin{pmatrix}
\alpha& \beta\\
\gamma& \delta
\end{pmatrix} \Rightarrow\\
\Rightarrow
\begin{pmatrix}
\alpha^2+\nu(x)^{-1} \beta\gamma& \nu(x)\alpha\beta+\beta\delta\\
\gamma\alpha+\nu(x)^{-1}\delta\gamma& \nu(x)\gamma \beta+\delta^2
\end{pmatrix}=
\begin{pmatrix}
\alpha^2+\nu(x) \beta\gamma& \alpha\beta+\nu(x)\beta\delta\\
\nu(x)^{-1}\gamma\alpha+\delta\gamma& \nu(x)^{-1}\gamma
\beta+\delta^2
\end{pmatrix}.
\end{multline*}
Therefore, $\nu(x)^{-1}\beta\gamma = \nu(x)\beta\gamma$ for distinct
$x\in R_+^*$. Hence, $\beta\gamma=0$.

Let us use the
condition
$(B_{12}(1))^2=diag[2,1,\dots,1]B_{12}(1)\cdot diag [1/2,
1,\dots,1]$:
$$
\begin{pmatrix}
\alpha^2& \beta(\alpha+\delta)& &&\\
\gamma (\alpha+\delta)& \delta^2&&&\\
&&a^2&&\\
&&&\ddots&\\
&&&&a^2
\end{pmatrix}=
\begin{pmatrix}
\alpha& \nu(2)\beta& &&\\
\nu(2)^{-1}\gamma& \delta&&&\\
&&a&&\\
&&&\ddots&\\
&&&&a
\end{pmatrix},
$$
that implies $\alpha^2=\alpha$, $\delta^2=\delta$, $a=1$ (since $a$
is invertible).

Use the condition $B_{12}(1)B_{13}(1)=B_{13}(1)B_{12}(1)$, it
implies
$$
\begin{pmatrix}
\alpha^2& \beta& \alpha\beta\\
\alpha\gamma& \delta& \gamma\beta\\
\gamma& 0& \delta
\end{pmatrix}=\begin{pmatrix}
\alpha^2&\beta\alpha&\beta\\
\gamma& \delta& 0\\
\alpha\gamma& 0 & \delta \end{pmatrix},
$$
therefore $\alpha\gamma=\gamma$.

Now we can use the condition
$B_{12}(1)B_{13}(1)=B_{13}(1)B_{23}(1)B_{12}(1)$, it gives
$$
\begin{pmatrix}
\alpha&\alpha\beta& \beta^2\\
\gamma& \alpha\delta& \beta\delta\\
0& \gamma& \delta
\end{pmatrix}=\begin{pmatrix}
\alpha^2& \alpha\beta& \beta\delta\\
\gamma&\alpha\delta& \beta\\
\alpha\gamma+\gamma^2\delta& \gamma\delta^2 &\delta^2
\end{pmatrix}.
$$
Thus, $\alpha\gamma+\gamma^2\delta=0$, consequently
$\alpha\gamma=\gamma=0$. Since $\gamma=0$, we have that elements
$\alpha$ and $\delta$ are invertible, since $\alpha$ and $\delta$
are idempotents, then $\alpha=\delta=1$. Therefore,
$\Phi'(B_{12}(1))=B_{12}(\beta)$.

Let us consider $B_{12}(x)$, $x\in R_+$. From the condition
$B_{12}(1)B_{23}(x)=B_{13}(x)B_{23}(x)B_{12}(1)$ it follows
$$
\begin{pmatrix}
1& a\beta& b\beta\\
0& a& b\\
0& c& d
\end{pmatrix}=\begin{pmatrix}
a& a\beta+bc& bd\\
0& a& b\\
c& c\beta+cd& d^2
\end{pmatrix}.
$$
So $c=0$, therefore $a$ and $d$ are invertible. From the condition
$B_{12}(x)^2=diag[2,1,\dots,1]^{-1} B_{12}(x) diag [2,1,\dots,1]$ it
follows $a^2=a$, $d^2=d$. Hence $a=d=1$. Thus,
$\Phi'(B_{12}(x))=B_{12}(b(x))$.
\end{proof}

Recall (see Definition~8), that if $G$ is a semigroup, then  a
homomorphism $\lambda(\cdot ): G\to G$ is called a \emph{ central
homomorphism} of~$G$, if $\lambda(G)\subset Z(G)$. A mapping
$\Omega(\cdot): G\to G$ such that $\forall X\in G$
$$
\Omega (X)=\lambda(X)\cdot X,
$$
where $\lambda(\cdot)$ is a central homomorphism, is called a
\emph{central homothety}.

Recall that for every $y(\cdot)\in Aut (R_+)$ by $\Phi^y$ we denote
an automorphism of $G_n(R)$ such that $\forall X=(x_{ij})\in G_n(R)$
$\Phi^y(X)=\Phi^y((x_{ij}))=(y(x_{ij}))$.

\begin{theorem}
Suppose that $\Phi$ is an arbitrary automorpism of $G_n(R)$, $n\ge
3$, $1/2\in R$. Then on the semigroup $GE_n^+(R)$ \emph{(}see
Definition~\emph{7)} $\Phi=\Phi_M\Phi^c\Omega$, where $M\in
\Gamma_n(R)$, $c(\cdot)\in Aut(R_+)$, $\Omega(\cdot)$ is a central
homothety of $GE_n^+(R)$.
\end{theorem}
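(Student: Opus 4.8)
The plan is to strip $\Phi$ down to a central homothety in three stages: first kill the permutation action by an inner automorphism, then remove an entrywise ring automorphism, and finally recognise what is left as a homothety on $GE_n^+(R)$. By the results of Section~2 --- Lemma~\ref{n3} for $n=3$, Lemma~\ref{n4} for $n=4$, and Lemma~\ref{glavnygemorroy} for $n>4$ --- I fix a matrix $M_0\in\Gamma_n(R)$ so that $\Phi':=\Phi_{M_0}\circ\Phi$ satisfies $\Phi'(S_\sigma)=\alpha^{sgn\,\sigma}S_\sigma$ for all $\sigma\in\Sigma_n$, with $\alpha\in R_+^*$ and $\alpha^2=1$. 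Lemma~\ref{otobr} then produces a map $c\colon R_+\to R_+$ with $\Phi'(B_{12}(x))=B_{12}(c(x))$; conjugating $B_{12}(x)$ by a suitable $S_\sigma$ and using that $\alpha I$ is central, the factors $\alpha^{sgn\,\sigma}$ cancel and $\Phi'(B_{ij}(x))=B_{ij}(c(x))$ for all $i\ne j$. From $B_{12}(x)B_{12}(y)=B_{12}(x+y)$ I read off $c(x+y)=c(x)+c(y)$, and from the identity $B_{13}(x)B_{32}(y)=B_{12}(xy)B_{32}(y)B_{13}(x)$, which holds in $G_n(R)$ for $n\ge 3$ and uses only nonnegative entries, I obtain $c(xy)=c(x)c(y)$ after cancelling the invertible factor $B_{32}(c(y))B_{13}(c(x))$.

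To see that $c$ is bijective I use that $\Phi'$ preserves the centre $Z(G_n(R))=\{\gamma I\mid\gamma\in R_+^*\}$, so $\Phi'^{-1}(\alpha I)=\beta I$ for some $\beta\in R_+^*$ with $\beta^2=1$. Then for a transposition $t$ one has $\Phi'(\beta S_t)=(\alpha I)(\alpha S_t)=S_t$, whence $\Phi'^{-1}(S_\sigma)=\beta^{sgn\,\sigma}S_\sigma$, and Lemma~\ref{otobr} applied to $\Phi'^{-1}$ yields $c'$ with $\Phi'^{-1}(B_{12}(x))=B_{12}(c'(x))$. The relations $\Phi'\circ\Phi'^{-1}=\Phi'^{-1}\circ\Phi'=\mathrm{id}$ force $c\circ c'=c'\circ c=\mathrm{id}$, so $c\in Aut(R_+)$; in particular $c(0)=0$ and $c(1)=1$. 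Since $c$ is additive and multiplicative, $\Phi^c$ is a semigroup automorphism of $G_n(R)$, and I may set $\Phi'':=\Phi^{c^{-1}}\circ\Phi'\in Aut(G_n(R))$. Then $\Phi''(B_{ij}(x))=B_{ij}(x)$ for all $i\ne j$, while $\Phi''(S_\sigma)=\alpha'^{\,sgn\,\sigma}S_\sigma$ with $\alpha'=c^{-1}(\alpha)$, $\alpha'^2=1$.

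By Lemma~\ref{diag} the automorphism $\Phi''$ preserves $D_n(R)$. Writing $D=diag[d_1,\dots,d_n]$ and $\Phi''(D)=diag[d_1',\dots,d_n']$ and applying $\Phi''$ to the conjugation relation $D\,B_{ij}(x)\,D^{-1}=B_{ij}(d_id_j^{-1}x)$ (using that $\Phi''$ fixes every transvection) gives $d_id_j^{-1}=d_i'(d_j')^{-1}$ for all $i\ne j$. Hence $d_i'=\mu(D)\,d_i$ with one common factor $\mu(D)=d_1'd_1^{-1}\in R_+^*$, that is $\Phi''(D)=\mu(D)\,D$, and $\mu\colon D_n(R)\to R_+^*$ is a homomorphism. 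Thus on each of the three families of generators of $\mathbf P$ the matrix $\Phi''(X)X^{-1}$ is a scalar matrix: it equals $\alpha'^{\,sgn\,\sigma}I$ for $X=S_\sigma$, equals $I$ for $X=B_{ij}(x)$, and equals $\mu(D)I$ for $X=D\in D_n(R)$.

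It remains to promote this to a central homothety on all of $GE_n^+(R)$, and this is the step I expect to be the main obstacle. Because scalar matrices are central, if $\Phi''(X)X^{-1}$ and $\Phi''(Y)Y^{-1}$ are scalar then so is $\Phi''(XY)(XY)^{-1}=\Phi''(X)\Phi''(Y)Y^{-1}X^{-1}$; so the property holds for every $X\in\mathbf P$, and moreover the images of the generators lie in $\mathbf P$ (each scalar sits in $D_n(R)\subset\mathbf P$), giving $\Phi''(\mathbf P)=\mathbf P$. The same cancellation propagates the property through $\mathcal P$-equivalence: from $P A\widetilde P=Q B\widetilde Q$ with $P,\widetilde P,Q,\widetilde Q\in\mathbf P$ and $\Phi''(B)B^{-1}$ scalar, applying $\Phi''$ and cancelling the invertible $\mathbf P$-factors forces $\Phi''(A)A^{-1}$ to be scalar too. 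Since $GE_n^+(R)$ is generated by matrices $\mathcal P$-equivalent to elements of $\mathbf P$ and $\Phi''(GE_n^+(R))=GE_n^+(R)$, I conclude that $\lambda(X):=\Phi''(X)X^{-1}\in Z(GE_n^+(R))$ for every $X\in GE_n^+(R)$; the product computation above shows $\lambda$ is a central homomorphism, so $\Phi''=\Omega$ is a central homothety. The delicate points here are exactly that $\Phi''$ must carry $\mathbf P$, and with it the whole relation of $\mathcal P$-equivalence, back into itself, and that the scalar $\Phi''(X)X^{-1}$ be independent of how $X$ is factored --- both are what the propagation through $\mathcal P$-equivalence secures. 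Finally $\Phi'=\Phi^c\circ\Omega$, hence $\Phi=\Phi_{M_0}^{-1}\circ\Phi^c\circ\Omega=\Phi_M\Phi^c\Omega$ on $GE_n^+(R)$ with $M=M_0^{-1}$, as required.
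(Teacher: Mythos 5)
Your proposal is correct and follows the same overall route as the paper: normalize the permutation matrices via Lemmas \ref{n3}, \ref{n4}, \ref{glavnygemorroy}; extract $c(\cdot)\in Aut(R_+)$ from Lemma~\ref{otobr} using $B_{12}(x)B_{12}(y)=B_{12}(x+y)$ and the relation $B_{13}(x)B_{32}(y)=B_{12}(xy)B_{32}(y)B_{13}(x)$; pass to $\Phi''=\Phi^{c^{-1}}\circ\Phi_{M_0}\circ\Phi$; and recognize the remainder as a central homothety first on $\mathbf P$ and then on all of $GE_n^+(R)$ by propagation through $\mathcal P$-equivalence. The one place you genuinely diverge is the final assembly. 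The paper first manufactures an explicit central homothety $\Omega'$ from the map $\mu(\det(\cdot))$ (using that determinants of matrices in $GE_n^+(R)$ are comparable with zero) so that $\Phi'''=\Omega'\circ\Phi''$ fixes every $S_\sigma$, and then defines $\overline\gamma$ on $\mathbf P$ through a chosen factorization, which forces a separate well-definedness check. You instead set $\lambda(X):=\Phi''(X)X^{-1}$ outright, verify it is scalar on each generating family of $\mathbf P$ (absorbing the residual factor $\alpha'^{\,sgn\,\sigma}$ directly into $\lambda$, which is legitimate because $\alpha'^2=1$ makes $\sigma\mapsto\alpha'^{\,sgn\,\sigma}$ multiplicative), and propagate using centrality of scalars; this makes $\lambda$ well defined by construction and dispenses with the determinant discussion entirely. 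Your derivation of the diagonal action from $DB_{ij}(x)D^{-1}=B_{ij}(d_id_j^{-1}x)$ is likewise a mild variant of the paper's computation with $\gamma(\alpha)$, reaching the same conclusion $\Phi''(D)=\mu(D)D$. Two small points worth making explicit: the inverses $X^{-1}$, $P_i^{-1}$ you cancel live in $GL_n(R)$ rather than $G_n(R)$, which is harmless since $G_n(R)\subset GL_n(R)$ by definition; and the scalars $\lambda(X)$ produced by the propagation are products of elements of $R_+^*$ and their inverses, so $\lambda$ indeed lands in $Z(GE_n^+(R))$.
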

\begin{proof}
By Lemmas \ref{n3}, \ref{n4}, \ref{glavnygemorroy} there exists such
a matrix $M'\in \Gamma_n(R)$, that for every substitution $\sigma\in
\Sigma_n$
$$
\Phi'(S_\sigma)=\Phi_{M'}\Phi(S_\sigma)=\alpha^{sgn\,\sigma}S_\sigma,
\quad \alpha^2=1.
$$

Now let us consider the automorphism~$\Phi'$.

Be Lemma~\ref{otobr} there exists a mapping $b(\cdot): R_+\to R_+$
such that for any $x\in R_+$
$$
\Phi'(B_{12}(x))=B_{12}(b(x)).
$$

Consider this mapping. Since $\Phi'$ is an automorphism of $G_n(R)$,
we have that $b(\cdot):R_+\to R_+$ is bijective.

Since for all $x_1,x_2\in R_+$\
$B_{12}(x_1+x_2)=B_{12}(x_1)B_{12}(x_2)$, then
\begin{multline*}
B_{12}(b(x_1+x_2))=\Phi'(B_{12}(x_1+x_2))=\Phi'(B_{12}(x_1)
B_{12}(x_2))=\\
=\Phi'(B_{12}(x_1))\Phi'(B_{12}(x_2))=B_{12}(b(x_1))\cdot
B_{12}(b(x_2))=B_{12}(b(x_1)+b(x_2)), \end{multline*}
 therefore for all $x_1,x_2\in R_+$ $c( x_1+x_2)=c(x_1)+c(x_2)$, consequently
$b(\cdot)$ is additive.

To prove multiplicativity of $b(\cdot)$, we will use the condition:

1)
$\Phi'(B_{13}(x))=\Phi'(S_{(2,3)}B_{12}(x)S_{(2,3)})=S_{(2,3)}B_{12}
(b(x))S_{(2,3)}=B_{13}(b(x))$;

2) similarly, $\Phi'(B_{32}(x))=B_{32}(b(x))$;

3) (compare with the proof of Lemma~\ref{otobr})
\begin{multline*}
B_{13}(x_1)B_{32}(x_2)=B_{32}(x_2)B_{13}(x_1)B_{12}(x_1x_2)\Rightarrow\\
\Rightarrow
\Phi'(B_{13}(x_1))\Phi'(B_{32}(x_2))=\Phi'(B_{32}(x_2))\Phi'
(B_{13}(x_1))\Phi'(B_{12}(x_1x_2))\Rightarrow\\
\Rightarrow
B_{13}(b(x_1))B_{32}(b(x_2))=B_{32}(b(x_2))B_{13}(b(x_1))B_{12}
(b(x_1x_2))\Rightarrow\\
\Rightarrow \forall x_1,x_2\in R_+\
\begin{pmatrix} 1& b(x_1)b(x_2)& b(x_1)\\
0& 1& 0\\
0& b(x_2)& 1
\end{pmatrix}=\begin{pmatrix}
1& b(x_1x_2)& b(x_1)\\
0& 1& 0\\
0& b(x_2)& 1
\end{pmatrix}\Rightarrow\\
\Rightarrow \forall x_1,x_2\in R_+\ b(x_1x_2)=b(x_1)b(x_2).
\end{multline*}

Therefore $b(\cdot)$ is multiplicative.

Since $b(\cdot)$ is bijective, additive and multiplicative, then
$b(\cdot )$ is an automorphism of the semiring~$R_+$.

Consider now the mapping $\Phi^{b^{-1}}$, that maps every matrix
$A=(a_{ij})$ to $\Phi^{b^{-1}}(A)=(b^{-1}(a_{ij}))$. This mapping is
an automorphism of $G_n(R)$. Then $\Phi''=\Phi^{b^{-1}}\circ
\Phi'=\Phi^{b^{-1}}\circ \Phi_{M'} \circ \Phi$ is an automorphism of
$G_n(R)$, that does not move
 $B_{ij}(x)$ ($x\in R_+$,
$i,j=1,\dots, n$, $i\ne j$) and
$\Phi''(S_\sigma)=\alpha^{sgn\,\sigma}S_\sigma$ ($\sigma \in
\Sigma_n$). Namely,
 $\Phi''(S_\sigma)=\Phi^{b^{-1}}(\Phi'(S_\sigma))=\Phi^{b^{-1}}(\alpha S_\sigma)=
b^{-1}(\alpha) S_\sigma$, since the matrix $S_\sigma$ contains only
$0$ and~$1$; for $i=3,\dots,n$ $\Phi''
(B_{i2}(x))=\Phi''(S_{(1,i)}B_{12}(x)S_{(1,i)})=
S_{(1,i)}\Phi''(B_{12}(x)))S_{(1,i)}=S_{(1,i)}\Phi^{b^{-1}}(B_{12}(b(x)))
S_{(1,i)}=S_{(1,i)}B_{12}(x)S_{(1,i)}=B_{i,2}(x)$; for $j=3,\dots
,n$ $\Phi''(B_{1j}(x))=\Phi''(S_{(2,j)}
B_{12}(x)S_{(2,j)})=S_{(2,j)} B_{12}(x) S_{(2,j)}=B_{1j}(x)$; for
$i,j=3,\dots,n$
$\Phi''(B_{ij}(x))=\Phi''(S_{(i,1)}B_{1j}(x)S_{(1,i)})=S_{(1,i)}B_{1j}(x)
S_{(1,i)}=B_{ij}(x)$.

As we know (see Lemma~\ref{d1}), for all $\alpha\in R_+^*$
$$
\Phi''(diag [\alpha,1,\dots,1])=diag
[\beta(\alpha),\gamma(\alpha),\dots,\gamma (\alpha)],\quad
\beta,\gamma\in R_+^*.
$$
Use the condition
\begin{multline*}
diag[\alpha,1,\dots,1]B_{12}(1)diag[\alpha^{-1},1,\dots,1]=B_{12}(\alpha)\Rightarrow\\
\Phi''(diag[\alpha,1,\dots,1])\Phi''(B_{12}(1))\Phi''(diag
[\alpha^{-1}
,1,\dots,1])=\Phi''(B_{12}(\alpha))\Rightarrow\\
\Rightarrow
diag[\beta(\alpha),\gamma(\alpha),\dots,\gamma(\alpha)]B_{12}(1)diag
[\beta
(\alpha)^{-1},\gamma(\alpha)^{-1},\dots,\gamma(\alpha)^{-1}]=B_{12}(\alpha)\Rightarrow\\
\Rightarrow \beta(\alpha)\gamma(\alpha)^{-1}=\alpha\Rightarrow
\beta(\alpha)=\alpha
\gamma(\alpha)\Rightarrow\\
\forall \alpha\in R_+^*\ \Phi''(diag[\alpha,1,\dots,1])=diag [\alpha
\gamma(\alpha), \gamma(\alpha),\dots,\gamma(\alpha)].
\end{multline*}

Since for all $\alpha_1,\alpha_2\in R_+^*$
\begin{multline*}
diag[\alpha_1\alpha_2\gamma(\alpha_1\alpha_2),\gamma(\alpha_1\alpha_2),\dots,
\gamma(\alpha_1\alpha_2)]=\Phi''(diag [\alpha_1\alpha_2,1,\dots,1])=\\
=\Phi''(diag[\alpha_1,1,\dots,1])\Phi''(diag[\alpha_2,1,\dots,1])=\\
=diag[\alpha_1\gamma(\alpha_1),\gamma(\alpha_1),\dots,\gamma(\alpha_1)]diag
[\alpha_2\gamma(\alpha_2),\gamma(\alpha_2),\dots,\gamma(\alpha_2)]=\\
=diag
[\alpha_1\alpha_2\gamma(\alpha_1)\gamma(\alpha_2),\gamma(\alpha_1)\gamma(\alpha_2),
\dots,\gamma(\alpha_1)\gamma(\alpha_2)]\Rightarrow\\
\Rightarrow \forall \alpha_1,\alpha_2\in R_+^*\
\gamma(\alpha_1\alpha_2)=\gamma(\alpha_1)\gamma(\alpha_2),
\end{multline*}
then the mapping $\gamma(\cdot)$ is a central homomorphism (see
Definition~8) $\gamma(\cdot): R_+^*\to R_+^*$.

If $A=diag[\alpha_1,\dots,\alpha_n]\in D_n(R)$, then
\begin{multline*}
\Phi''(A)=\\
=\Phi''(diag[\alpha_1,1,\dots,1] S_{1,2}diag[\alpha_2,1,\dots,1]
S_{(1,2)}
S_{(1,3)} diag [\alpha_3,1,\dots,1]\times\\
\times S_{(1,3)} \dots S_{(1,n)} diag[\alpha_n,1,
\dots,1]S_{(1,n)})=\\
=\gamma(\alpha_1)diag[\alpha_1,1,\dots,1]S_{(1,2)}\gamma(\alpha_2)
diag
[\alpha_2,1,\dots,1]S_{(1,2)}\dots\\
\dots S_{(1,n)} \gamma(\alpha_n) diag[\alpha_n,1,\dots,1]
\gamma(\alpha_n)=\\
=\gamma(\alpha_1)\dots
\gamma(\alpha_n)A=\gamma(\alpha_1\dots\alpha_n)A.
\end{multline*}

Recall (see Definition~5), that $\mathbf P$ is a subsemigroup of
$G_n(R)$, generated by~$S_\sigma$ ($\sigma\in \Sigma_n$),
$B_{ij}(x)$ ($x\in R_+$, $i,j=1,\dots,n$, $i\ne j$), and $diag
[\alpha_1,\dots,\alpha_n]$ ($\alpha_1,\dots,\alpha_n\in R^*_+$).

Note that determinant of any matrix from $GE^+_n(R)$ is an
invertible element of $G_n(R)$, that can be compared with zero (it
is $\ge 0$ or $\le 0$). It follows from the fact that all diagonal
matrices have determinant $\ge 0$,  all matrices of substitutions
have determinant $\pm 1$, matrices $B_{ij}(x)$ and their inverse
matrices have determinant~$1$.

Let $\Phi''(S_{(1,2)})=\alpha S_{(1,2)}$, $\alpha^2=1$. Consider the
mapping $\mu: R_+^*\cup R_-^*\to R_+^*$, that corresponds every
$a\in R_+^*$ to itself, and every $a\in R_-^*$ to $\alpha a$. Clear
that it is a homomorphism.

Then the mapping that corresponds every matrix $A$ to the matrix
$\mu(\det A)A$, is a central homothety of $GE_n^+(R)$. Let us denote
this homothety by $\Omega'$ and consider the composition
$\Phi'''=\Omega'\circ \Phi''$. It is an automorphism of $GE_n^+(R)$,
that does not move $S_\sigma$, $\sigma\in \Sigma_n$, and
$B_{ij}(x)$, $i\ne j$, $x\in R_+$.

Clear that every matrix $A\in \mathbf P$ can be represented as
$$
A=diag[\alpha_1,\dots,\alpha_n]A_1\dots A_k,
$$
where $\alpha_1,\dots,\alpha_n\in R^*_+$, $A_1,\dots,A_k\in \{
S_\sigma, B_{ij}(x)| \sigma\in \Sigma_n, x\in R_+, i,j=1,\dots,n,
i\ne j\}$. Then
\begin{multline*}
\Phi'''(A)=\Phi'''(diag[\alpha_1,\dots,\alpha_n]A_1\dots A_k)=\\
=\gamma(\alpha_1 \dots \alpha_n)diag
[\alpha_1,\dots,\alpha_n]A_1\dots A_k= \gamma (\alpha_1\dots
\alpha_n)A.
\end{multline*}
Now we introduce a mapping $\overline \gamma(\cdot): {\mathbf P}\to
R_+^*$ by the following rule: if $A\in \mathbf P$ and
$A=diag[\alpha_1,\dots,\alpha_n]A_1\dots A_k$, where
$A_1,\dots,A_k\in \{ S_\sigma, B_{ij}(x)| \sigma\in \Sigma_n, x\in
R_+, i,j=1,\dots,n, i\ne j\}$, then $\overline \gamma
(A)=\gamma(\alpha_1,\dots,\alpha_n)$.

The mapping $\overline \lambda(\cdot)$ is uniquely defined, since if
$$ A=diag[\alpha_1,\dots,\alpha_n]A_1\dots
A_k=\\
=diag [\alpha_1', \dots, \alpha_n']A_1'\dots A_m',
$$
 then
 $\Phi'''(A)=\gamma(\alpha_1\dots \alpha_n)A$ and $\Phi'''(A)=\gamma(\alpha_1'
\dots \alpha_n')A$, therefore
 $\gamma(\alpha_1\dots \alpha_n)=\gamma(\alpha_1'\dots\alpha_n')$.

Since $\overline \gamma (AA')
AA'=\Phi'''(AA')=\Phi'''(A)\Phi'''(A')=\overline \gamma (A) A\cdot
\overline \gamma(A')A'=\overline \gamma(A) \overline \gamma(A')AA'$,
then $\overline \gamma$ is a homomorphism $\mathbf P\to R_+^*$.

Now we see that on~$\mathbf P$ the automorphism
 $\Phi'''$ coincides with a central homothety
 $\Omega(\cdot): \mathbf P\to \mathbf P$, where for all $a\in
\mathbf P$ $\Omega(A)=\overline \gamma(A)\cdot A$.

Let $B\in GE_n^+(R)$. Then (see Definitions~6,7) a matrix $B$ is
$\mathcal P$-equivalent to some matrix $A\in \mathbf P$, i.e. there
exist matrices
 $A_0,\dots, A_k\in G_n(R)$,
$A_0=A\in \mathbf P$, $A_k=B$ and matrices $P_i,\widetilde P_i, Q_i,
\widetilde Q_i\in \mathbf P$, $i=0,\dots,k-1$ such that for all
$i=0,\dots,k-1$
$$
P_i A_i \widetilde P_i=Q_i A_{i+1}\widetilde Q_i.
$$
Then
\begin{multline*}
\Phi'''(P_0A_0\widetilde P_0)=\Phi'''(Q_0A_1\widetilde Q_0)\Rightarrow\\
\Rightarrow \overline \gamma (P_0)P_0\overline \gamma
(A_0)A_0\overline \gamma (\widetilde P_0)\widetilde P_0=\overline
\gamma(Q_0)Q_0\Phi''(A_1)\overline \gamma(\widetilde Q_0)\widetilde
Q_0
\Rightarrow\\
\overline \gamma (P_0A_0\widetilde P_0)P_0A_0\widetilde
P_0=\overline \gamma (Q_0\widetilde
Q_0)Q_0 \Phi'''(A_1)\widetilde Q_0\Rightarrow\\
\Rightarrow \overline \gamma (P_0A_0\widetilde P_0)\overline
\gamma(Q_0\widetilde
Q_0)^{-1} Q_0A_1\widetilde Q_0=Q_0\Phi'''(A_1)\widetilde Q_0\Rightarrow\\
\Rightarrow \Phi'''(A_1) =\overline \gamma (P_0A_0\widetilde
P_0)\overline \gamma
(Q_0\widetilde Q_0)^{-1}A_1,\dots,\\
\dots, \Phi'''(B)=\Phi''(A_n)=\overline \gamma (P_{n-1})\overline
\gamma (A_{n-1}) \overline \gamma (\widetilde P_{n-1})\overline
\gamma (Q_{n-1})^{-1} \overline \gamma (\widetilde Q_{n-1}).
\end{multline*}
Consequently, we can extend the mapping
 $\overline \gamma(\cdot): \mathbf P\to R_+^*$
to some mapping $\lambda(\cdot): GE_n^+(R)\to R_+^*$ such that for
every $B\in GE_n^+(R)$
$$
\Phi'''(B)=\lambda (B)\cdot B.
$$

Since $\Phi'''$ is an automorphism of $GE_n^+(R)$, then
$\lambda(\cdot)$ is a central homomorphism
 $\lambda(\cdot): GE_n^+(R)\to R_+^*$ and, therefore,
automorphism $\Phi''':GE_n^+(R)\to GE_n^+(R)$ is a central homothety
$\Omega''(\cdot): GE_n^+(R)\to GE_n^+(R)$, where
 $\forall X\in GE_n^+(R)$ $\Omega''(X)=\lambda
(X)\cdot X$.

Since $\Phi'''=\Omega''$ on $GE_n^+(R)$ and
 $\Phi'''=\Omega'\circ \Phi^{c^{-1}}\circ \Phi_{M'}\circ \Phi$
on $G_n(R)$, then $\Phi=\Phi_M\circ \Phi^c \circ \Omega$ on
$GE_n^+(R)$, where $M={M'}^{-1}$, $\Omega=\Omega'\Omega''$.
\end{proof}

\end{document}